\ifpdf\usepackage{pst-pdf}\else\fi
\newtheoremstyle{convenientthm}%
  {3pt}
  {3pt}
  {\itshape}
  {}
  {\bfseries}
  {.}
  {.5em}
  {\thmnumber{#2 }\thmname{#1}\thmnote{. #3}}
\newtheoremstyle{convenientplain}%
  {3pt}
  {3pt}
  {}
  {}
  {\bfseries}
  {.}
  {.5em}
  {\thmnumber{#2 }\thmname{#1}\thmnote{. #3}}
\theoremstyle{convenientthm}
\newaliascnt{theorem}{subsection}
\newtheorem{theorem}[theorem]{Theorem}
\newtheorem*{theorem*}{Theorem}
\newtheorem*{main*}{Main Theorem}
\newaliascnt{result}{subsection}
\newtheorem{result}[result]{Result}
\newaliascnt{lemma}{subsection}
\newtheorem{lemma}[lemma]{Lemma}
\newaliascnt{corollary}{subsection}
\newtheorem{corollary}[corollary]{Corollary}
\theoremstyle{convenientplain}
\newaliascnt{definition}{subsection}
\newtheorem{definition}[definition]{Definition}
\newaliascnt{remark}{subsection}
\newtheorem{remark}[remark]{Remark}
\newaliascnt{example}{subsection}
\newtheorem{example}[example]{Example}
\let\oldautoref\autoref
\renewcommand\autoref[1]{\@first@ref#1,@}
\def\@throw@dot#1.#2@{#1}
\def\@set@refname#1{
    \edef\@tmp{\getrefbykeydefault{#1}{anchor}{}}%
    \def\@refname{\@nameuse{\expandafter\@throw@dot\@tmp.@autorefname}s}%
}
\def\@first@ref#1,#2{%
  \ifx#2@\oldautoref{#1}\let\@secondref\@gobble
  \else%
    \@set@refname{#1}
    \@refname~\ref{#1}
    \let\@secondref\@second@ref
  \fi%
  \@secondref#2%
}
\def\@second@ref#1,#2{%
  \ifx#2@ and~\ref{#1}\let\@nextref\@gobble
  \else, \ref{#1}
    \let\@nextref\@next@ref
  \fi%
  \@nextref#2%
}
\def\@next@ref#1,#2{%
   \ifx#2@, and~\ref{#1}\let\@nextref\@gobble
   \else, \ref{#1}
   \fi%
   \@nextref#2%
}
\let\oldtheequation\theequation
\def\tagform@#1{\maketag@@@{\ignorespaces#1\unskip\@@italiccorr}}
\renewcommand{\theequation}{(\oldtheequation)}
\def\al{\alpha}
\def\be{\beta}
\def\de{\delta}
\def\th{\theta}
\def\ka{\kappa}
\def\la{\lambda}
\def\si{\sigma}
\def\ph{\varphi}
\def\Ga{\Gamma}
\let\on=\operatorname
\let\mc=\mathcalMorawec
\def\inv{^{-1}}
\def\x{\times}
\def\p{\partial}
\def\R{{\mathbb R}}
\def\exp{\operatorname{exp}}
\def\ud{\,\mathrm{d}}
\def\Id{\on{Id}}
\def\ImmL{\on{Imm}}
\def\Diff{\on{Diff}}
\def\Tra{{\on{Tra}}}
\def\Lip{W^{1,\infty}}
\def\ImmL{\mathcal I^{1,\infty}}
\def\Land{\on{Land}}
\let\mc=\mathcal
\DeclareMathOperator*{\esssup}{ess\,sup}
\DeclareMathOperator*{\essinf}{ess\,inf}
\author[M. Bauer]{Martin Bauer}
\address{Faculty for Mathematics, Florida State University,  USA}
\email{bauer@math.fsu.edu}
\author[M. Bruveris]{Martins Bruveris}
\address{Faculty for Mathematics, Brunel University,  UK}
\email{martins.bruveris@brunel.ac.uk }
\author[P. Harms]{Philipp Harms}
\address{Faculty for Mathematics, Freiburg University, Germany}
\email{philipp.harms@stochastik.uni-freiburg.de}
\author[P. Michor]{Peter W. Michor}
\address{Department of Mathematics, University of Vienna, Austria}
\email{peter.michor@univie.ac.at}
\begin{document}

\title{Soliton solutions for the elastic metric on spaces of curves}

\begin{abstract}
In this article we investigate a first order reparametrization-invariant Sobolev metric on the space of immersed curves.
Motivated by applications in shape analysis where discretizations of this infinite-dimensional space are needed, we extend this metric to the space of Lipschitz curves, establish the wellposedness of the geodesic equation thereon, and show that the space of piecewise linear curves is a totally geodesic submanifold. 
Thus, piecewise linear curves are natural finite elements for the discretization of the geodesic equation. 
Interestingly, geodesics in this space can be seen as soliton solutions of the geodesic equation, which were not known to exist for reparametrization-invariant Sobolev metrics on spaces of curves.
\end{abstract}

\maketitle
\setcounter{tocdepth}{1}
\tableofcontents
\section{Introduction}

Geometric shapes can be studied mathematically by viewing them as elements of a Riemannian manifold, which is typically infinite-dimensional. The geodesic distance between shapes is then used as a measure of their dissimilarity. For numerical purposes, shapes need to have a representation in a finite-dimensional space, and a particularly favorable situation arises if this space is a totally geodesic submanifold. In this case geodesics, geodesic distances, and Riemannian curvature in the submanifold coincide (locally) with the corresponding objects in the infinite-dimensional space; there is no discretization error. In this work we show the following result. 

\begin{main*}
The reparametrization-invariant $H^1$-metric 
\begin{equation}\label{equ:i:metric}
G_c(h,k) = \frac{1}{\on{\ell_c}}\int_{S^1} \langle D_s h, D_s k \rangle \ud s
= \frac{1}{\on{\ell_c}}\int_{S^1} \frac{1}{|c_\theta|} \langle h_\theta, k_\theta \rangle \ud \theta\,,
\end{equation}
on the space of immersed closed Lipschitz curves modulo translations possesses finite-dimensional totally geodesic submanifolds, which correspond to finite-element discretizations. The geodesics on these submanifolds turn out to be solitons in the sense that their momenta are sums of delta distributions, which are carried along with the flow.
\end{main*}

The result is established in \autoref{thm:s:totally} and \autoref{cor:s:soliton} below. The notation is explained in \autoref{sec:m}, and the metric is defined rigorously in \autoref{def:m:metric}. 
An introduction to shape analysis and further references for Sobolev metrics can be found in \cite{bauer2014overview}. 

\subsection*{Totally geodesic submanifolds} 

The existence of totally geodesic submanifolds is surprising; it seems to be the exception rather than the rule, at least in the context of shape spaces of immersions and reparametrization-invariant Sobolev metrics. We now explain this in more details.

We are not aware of any reparametrization-invariant metric of order other than one which admits non-trivial totally geodesic subspaces, cf.\@ \autoref{rem:totallygeodesic}.  
We believe, however, that the result does extend to some first-order metrics closely related to \eqref{equ:i:metric}. Examples in this direction are the non scale-invariant $H^1$-metric
\begin{equation*}
G_c(h,k) = \int_{S^1} \langle D_s h, D_s k \rangle \ud s
\end{equation*}
and the elastic metric on planar curves 
\begin{equation*}
G_c(h,k) =  \int_{S^1} \Big(a^2\langle D_s h, v\rangle \langle D_s k,v \rangle + b^2 \langle D_s h, n\rangle \langle D_s k,n \rangle\Big) \ud s\,.
\end{equation*}
In the last equation, $a,b \in \mathbb R$ are constants and $v,n$ are the velocity and normal vector fields to the planar curve $c$. Many of these metrics have in common that there exist isometries to well-known spaces such as spheres, Stiefel manifolds, or submanifolds thereof  \cite{younes2008metric,Jermyn2011,Bauer2014b,lahiri2015precise}, where the existence of totally geodesic subspaces can be studied from an alternative perspective.

\subsection*{Reparametrization-invariance} 

The result is trivial for the flat $L^2$-metric, which does not use the arc-length measure, and variations of it; these are not invariant with respect to reparametrizations. We are, however, not interested in these metrics because they do not induce meaningful (or even well-defined) metrics on the quotient space of immersions modulo reparametrizations. This quotient space is the natural setting for applications in shape analysis, and reparametrization-invariant Sobolev metrics thereon have been used successfully in many applications \cite{LKS2014,Klassen2004,sundaramoorthi2011new,YezziMennucci2005,Esl2014,bauer2017numerical}.

\subsection*{Solitons} 

Soliton solutions were investigated in various contexts. In the context of wave equations, solitons are isolated waves which maintain their shape while traveling at constant speed \cite{zabusky1965interaction, dodd1982solitons}. 

An alternative notion of solitons arises in geometric mechanics, where solutions of a Hamiltonian system are called solitons if their momenta are sums of delta distributions \cite{mumford2013euler}. This is the notion we use in this work; we refer to \cite{michor2006some,michor2007overview} for a Hamiltonian description of shape analysis. Solitons in the sense of geometric mechanics were found for metrics induced by reproducing kernels on diffeomorphism groups \cite{Joshi2000,Younes2010,Micheli2012}, but not yet on spaces of immersions as in this work. We describe a connection of our approach to soliton solutions on diffeomorphism groups in \autoref{sec:l}.

\subsection*{Structure of the article}

The paper is structured as follows. In \autoref{sec:m} we introduce a first-order Sobolev metric on the space of Lipschitz curves and prove that the geodesic equation is well-posed using a geometric method which goes back to Ebin and Marsden \cite{Ebin1970}. In \autoref{sec:p} we study the subspace of piecewise linear curves and equip it with the induced metric of \autoref{sec:m}. \autoref{sec:s} contains our main results: we show that the manifold of piecewise linear curves is totally geodesic and illustrate the soliton-like behavior of geodesics. \autoref{sec:h,sec:l} give a Hamiltonian perspective and establish some relations to  LDDMM metrics on landmark spaces. 

\section{A first order metric on Lipschitz curves}\label{sec:m}

In this section we define a reparametrization-invariant smooth weak Riemannian metric on the space of closed Lipschitz curves modulo translations and establish the well-posedness of the geodesic equation. 

\begin{definition}\label{def:m:lipschitz}
Let $S^1=\R/(2\pi\mathbb Z)$ be the unit circle and $d \in \mathbb N\backslash\{0,1\}$. Let $\Lip$ be the Banach space of Lipschitz continuous functions $c\colon S^1 \to \R^d$, endowed with the norm 
$\|c\|_{\Lip} = \|c\|_{L^\infty}+\|c_\theta\|_{L^\infty}$, where the subscript $\theta$ denotes a derivative. The Banach space $\Lip$ contains the space of Lipschitz continuous immersions 
\begin{equation*}
\ImmL:=\left\{c\in \Lip:\;   \on{essinf}_\theta |c_\theta|>0 \right\}\,.
\end{equation*}
Let $\Tra \cong \R^d$ be the translation group acting on $\Lip$ and $\ImmL$. We will always identify the corresponding quotient spaces as follows:
\begin{align*}
\ImmL/\Tra \cong \ImmL_0 &:=\left\{c\in \ImmL:\;  \int_{S^1}c(\theta)\ud \theta=0 \right\}\,,
\\
\Lip /\Tra \cong\Lip_0 &:= \left\{c\in \Lip:\;  \int_{S^1}c(\theta)\ud \theta=0 \right\}\,.
\end{align*} 
Moreover, we make the convention that all function spaces consist of functions from $S^1$ to $\R^d$, unless another domain or range is specified explicitly. 
\end{definition}

\begin{theorem}\label{thm:m:manifold}
The spaces $\ImmL$ and $\ImmL/\Tra$ are open subsets of the Banach spaces $\Lip$ and $\Lip/\Tra$ and therefore Banach manifolds with tangent bundles $\ImmL \x \Lip$ and $\ImmL_0 \x \Lip_0$, respectively.
\end{theorem}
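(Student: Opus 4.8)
The plan is to exhibit $\ImmL$ as the preimage of the open ray $(0,\infty)$ under a continuous real-valued function on $\Lip$, and then to transfer this openness to the quotient by means of the linear splitting $\Lip = \Lip_0 \oplus \R^d$.

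First I would consider the map $\Phi\colon \Lip \to \R$, $\Phi(c) = \essinf_\theta |c_\theta(\theta)|$, so that $\ImmL = \Phi^{-1}\big((0,\infty)\big)$ holds by definition. The key observation is that $\Phi$ is $1$-Lipschitz for the $\Lip$-norm: if $\|c - \tilde c\|_{\Lip} < \ep$, then $\|c_\theta - \tilde c_\theta\|_{L^\infty} < \ep$, hence $\big||c_\theta(\theta)| - |\tilde c_\theta(\theta)|\big| \le |c_\theta(\theta) - \tilde c_\theta(\theta)| < \ep$ for almost every $\theta$, and taking the essential infimum gives $|\Phi(c) - \Phi(\tilde c)| \le \ep$. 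In particular $\Phi$ is continuous, so $\ImmL = \Phi^{-1}\big((0,\infty)\big)$ is open in $\Lip$; concretely, if $c \in \ImmL$ then the ball of radius $\Phi(c)/2$ around $c$ in $\Lip$ lies in $\ImmL$. It is exactly here that the $L^\infty$-part $\|c_\theta\|_{L^\infty}$ of the norm is used; with a weaker control on the derivative this openness would fail.

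For the quotient, I would use that $\Lip_0$ is a closed complemented subspace of $\Lip$, with complement the constants $\R^d \cong \Tra$: the mean $c \mapsto \tfrac{1}{2\pi}\int_{S^1} c \ud\theta$ is a bounded projection, so $\Lip/\Tra \cong \Lip_0$ as Banach spaces and the quotient map is realized as the (open, continuous) linear projection onto $\Lip_0$. Since $\ImmL$ is invariant under translations, which do not change $c_\theta$, its image in the quotient is identified with $\ImmL \cap \Lip_0 = \ImmL_0$, and this is open in $\Lip_0$ because the inclusion $\Lip_0 \hookrightarrow \Lip$ is continuous and $\ImmL$ is open in $\Lip$.

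Finally, an open subset $U$ of a Banach space $E$ is trivially a Banach manifold, using the identity as a single global chart, and its tangent bundle is $TU = U \times E$. Applying this to $U = \ImmL \subset \Lip$ and to $U = \ImmL_0 \subset \Lip_0$ yields the stated tangent bundles $\ImmL \times \Lip$ and $\ImmL_0 \times \Lip_0$. There is no genuine obstacle in this proof; the only points needing a little care are the continuity of the essential-infimum functional $\Phi$ — which hinges on controlling $c_\theta$ in $L^\infty$ — and the bookkeeping that identifies the quotient topology with that of $\Lip_0$.
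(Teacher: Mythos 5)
Your proof is correct and follows essentially the same route as the paper: both arguments rest on showing that $c\mapsto \essinf_\theta|c_\theta|$ is $1$-Lipschitz (hence continuous) on $\Lip$, so that $\ImmL$ is the preimage of $(0,\infty)$, and then identify the quotient by translations with the closed complemented subspace $\Lip_0$ to transfer openness to $\ImmL_0$. Your added remarks on the explicit radius $\Phi(c)/2$ and the bounded mean-value projection are just slightly more detailed bookkeeping of the same argument.
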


\begin{proof}
The expression $\essinf_\theta |c_\theta|$ is continuous in $c \in  \Lip$. To see this let $c,\tilde c \in \Lip$ and $\theta\in S^1$. Then
\begin{align*}
|\tilde c_\theta(\theta)| \geq |c_\theta(\theta)|-|\tilde c_\theta(\theta)-c_\theta(\theta)| \geq |c_\theta(\theta)|-\|\tilde c-c\|_{\Lip}
\end{align*}
and consequently
\begin{align*}
\essinf_\theta |\tilde c_\theta(\theta)| \geq \essinf_\theta |c_\theta(\theta)|-\|\tilde c-c\|_{\Lip}.
\end{align*}
Interchanging the roles of $c$ and $\tilde c$ leads to 
\begin{equation*}
\left|\essinf_\theta |\tilde c_\theta(\theta)|-\essinf_\theta |c_\theta(\theta)|\right| \leq \|\tilde c-c\|_{\Lip}.
\end{equation*}
This proves that the mapping $c\mapsto \essinf |c_\theta|$ is Lipschitz on $\Lip$. Thus, $\ImmL$ is an open subset of $\Lip$, and therefore a Banach manifold. The quotient $\Lip/\Tra$ is Banach because $\Tra$ is a closed subspace of the Banach space $\Lip$. As a topological space it is isomorphic to $\Lip_0$. Similarly, $\ImmL/\Tra$ can be identified with $\ImmL_0$, which is an open subset of $\Lip_0$. 
\end{proof}

\begin{remark}
Besides $\ImmL_0$, several other spaces could be used as alternative representations of the quotient space $\ImmL/\Tra$. For example, one could consider all immersions that fix some point $\theta_0$, or all immersions whose center of mass is zero, yielding the spaces
\begin{equation*}
\ImmL_{1} :=\left\{c\in \ImmL:\;  c(\theta_0)=0 \right\}\,,\;
\ImmL_{2} :=\left\{c\in \ImmL:\;  \int c(\theta) |c_\theta| \ud \theta=0 \right\}\,.
\end{equation*}
The particular choice of $\ImmL_0$ is useful in the Hamiltonian description in \autoref{sec:h}. Another possibility is to consider the image $\mc L^\infty_0$ of either of these spaces under the mapping $c\mapsto c_\theta$, i.e., 
\begin{equation*}
\mc L^\infty_0 := \left\{q \in L^\infty: \essinf_{\theta\in S^1} |q(\theta)|>0,\;\int q(\th) \ud \theta=0\right\}\,. 
\end{equation*}
Note, that the second condition ensures that each element of $\mc L^\infty_0$ corresponds to a closed curve.
\end{remark}

\begin{definition}
\label{def:m:metric}
For each $c\in \ImmL$ and $h,k\in \Lip$ we define the bilinear form
\begin{equation*}
G_c(h,k) = \frac{1}{\on{\ell_c}}\int_{S^1} \langle D_s h, D_s k \rangle \ud s
= \frac{1}{\on{\ell_c}}\int_{S^1} \frac{1}{|c_\theta|} \langle h_\theta, k_\theta \rangle \ud \theta\,,
\end{equation*}
where $\ud s=|c_\theta|\ud\theta$ and  $D_s=\frac{1}{|c_\theta|}\partial_{\theta}$ denote differentiation and integration with respect to arc length and $\ell_c=\int_{S^1}\ud s$ is the length of $c$. 
\end{definition}

Note that the bilinear form $G_c$ is degenerate because $G_c(h,h)=0$ for each constant $h:S^1\to\R^d$. It is, however, non-degenerate if translations are factored out, as the following theorem shows.

\begin{lemma}\label{lem:m:smooth}
$G$ is a smooth weak Riemannian metric on $\ImmL_0$.
\end{lemma}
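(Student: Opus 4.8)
The plan is to verify the three defining properties of a smooth weak Riemannian metric on the Banach manifold $\ImmL_0$: (i) for each $c\in\ImmL_0$ the bilinear form $G_c$ is a bounded, symmetric, positive-definite form on the model space $\Lip_0$; (ii) the assignment $c\mapsto G_c$ is smooth as a map $\ImmL_0\to L^2_{\mathrm{sym}}(\Lip_0;\R)$; and (iii) the induced map $\Lip_0\to\Lip_0^*$, $h\mapsto G_c(h,\cdot)$, is injective (weakness means we do not claim it is onto). Boundedness in (i) is immediate: since $c\in\ImmL$ we have $|c_\theta|\ge\epsilon:=\essinf|c_\theta|>0$ and $\ell_c>0$, so $|G_c(h,k)|\le(\epsilon\,\ell_c)^{-1}\|h_\theta\|_{L^2}\|k_\theta\|_{L^2}\le(\epsilon\,\ell_c)^{-1}\|h\|_{\Lip}\|k\|_{\Lip}$. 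Symmetry is obvious.

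For positive-definiteness and (iii), suppose $G_c(h,h)=0$; then $\int_{S^1}|c_\theta|^{-1}|h_\theta|^2\ud\theta=0$, and since the weight $|c_\theta|^{-1}$ is bounded below by $1/\|c_\theta\|_{L^\infty}>0$, this forces $h_\theta=0$ a.e., so $h$ is (a.e. equal to) a constant; but the only constant in $\Lip_0$ is $0$ because $\int_{S^1}h\ud\theta=0$. Hence $h=0$, which simultaneously gives definiteness of $G_c$ and injectivity of $h\mapsto G_c(h,\cdot)$ on $\Lip_0$. This is where the passage to the translation quotient is essential, exactly as flagged in the remark preceding the lemma.

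The real work is (ii), the smoothness of $c\mapsto G_c$. I would write $G_c(h,k)=\Phi(c)(h,k)$ where $\Phi\colon\ImmL_0\to L^2_{\mathrm{sym}}(\Lip_0;\R)$ factors through elementary building blocks: the map $c\mapsto c_\theta$ is bounded linear $\Lip_0\to L^\infty$; the map $q\mapsto|q|$ (Euclidean norm, pointwise) is smooth on the open set $\{q\in L^\infty:\essinf|q|>0\}$ because $x\mapsto|x|$ is smooth on $\R^d\setminus\{0\}$ and composition/Nemytskii operators with such maps are smooth between $L^\infty$-spaces on a finite measure space; inversion $u\mapsto 1/u$ is smooth on $\{u\in L^\infty:\essinf u>0\}$; multiplication $L^\infty\times L^2\times L^2\to L^1$, $(w,a,b)\mapsto\int w\langle a,b\rangle$, is bounded trilinear, hence smooth; and $c\mapsto\ell_c=\int|c_\theta|\ud\theta$ is smooth $\ImmL_0\to(0,\infty)$ by the same Nemytskii-operator reasoning followed by integration, so $c\mapsto1/\ell_c$ is smooth. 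Composing these, $c\mapsto G_c$ is a composition of smooth maps between open subsets of Banach spaces and is therefore smooth; one checks along the way that the target genuinely lands in the space of \emph{bounded} symmetric bilinear forms on $\Lip_0$, which is the boundedness estimate from step (i) made uniform on $\Lip_0$-neighborhoods (using that $\essinf|c_\theta|$ and $\ell_c$ are locally bounded away from $0$, which is precisely the content of \autoref{thm:m:manifold} and its proof).

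The main obstacle is the Nemytskii/composition-operator bookkeeping in step (ii): one must be a little careful that composition with the smooth map $x\mapsto|x|$ (and with $u\mapsto 1/u$) is a \emph{smooth} — not merely continuous — map on the relevant $L^\infty$-open-sets, and that the finite measure of $S^1$ lets one chain an $L^\infty$-valued weight against $L^2$-valued arguments to land in $L^1$ so that integration makes sense. None of this is deep, but it is the step where the Banach-space (as opposed to formal) nature of the statement actually has to be addressed; everything else is a one-line estimate or an algebraic triviality.
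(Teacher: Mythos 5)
Your proof is correct and follows essentially the same route as the paper: the non-degeneracy argument ($h_\theta=0$ a.e.\ plus the zero-mean normalization forces $h=0$) is identical, and your Nemytskii-operator decomposition of the smoothness of $c\mapsto G_c$ is precisely the content the paper delegates to \autoref{cor:a:smooth}, whose second proof in Appendix~A (\autoref{lem:a:frechet}) is exactly the Omega lemma on $L^\infty$ that you invoke. The only nitpick is a harmless missing factor of $2\pi$ in your boundedness estimate when passing from $\|h_\theta\|_{L^2}$ to $\|h\|_{\Lip}$.
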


\begin{proof}
If $G_c(h,h)=0$ for some $c \in \ImmL_0$ and $h \in \Lip_0$, then $h_\theta=0$ almost everywhere. It follows that $h=0$ because $\int_{S^1} h(\theta)\ud \theta=0$ by assumption. Therefore, $G$ is non-degenerate. The smoothness of $G$ is a consequence of \autoref{cor:a:smooth}.
\end{proof}

\begin{remark}
Note that the metric $G$ is invariant under the action of the diffeomorphism group $\Diff(S^1)$ on $\ImmL$:
\begin{align*}
G_{c\circ\varphi}(h\circ\varphi,k\circ\varphi) &=  \frac{1}{\on{\ell_{c\circ\varphi}}}\int_{S^1} \frac{1}{|c_\theta\circ\varphi.\varphi_\theta|} \langle h_\theta\circ\varphi.\varphi_\theta, k_\theta\circ\varphi.\varphi_\theta \rangle \ud \theta\\
&=  \frac{1}{\on{\ell_{c}}}\int_{S^1} \frac{1}{|c_\theta|} \langle h_\theta, k_\theta \rangle \ud \theta=G_c(h,k)\,.
\end{align*}
Moreover, note that $G$ is invariant under scalings $x\mapsto \lambda x$, $\lambda >0$, $x\in\R^d$.
\end{remark}

To formulate the geodesic equation, which is our next goal, we need to invert the operator $D_s$ on a suitably restricted domain. This is achieved by the following lemma.

\begin{lemma}\label{lem:m:Ds}
For each $c \in \ImmL_0$ the following diagram is commutative,
\begin{equation*}
\xymatrix{
W^{1,\infty} \ar@{->>}[r]^-{\pi_1} \ar[d]^{D_s}
& W^{1,\infty}_0 := \left\{h \in W^{1,\infty}: \int h\ud \theta =0\right\} \ar@{^{(}->}[r]^-{\iota_1} \ar@<0.5ex>[d]^{D_s} 
& W^{1,\infty} \ar[d]^{D_s}
\\
W^{0,\infty} \ar@{->>}[r]^-{\pi_0}
& W^{0,\infty}_0 := \left\{h \in W^{0,\infty}: \int h\ud s =0\right\} \ar@{^{(}->}[r]^-{\iota_0} \ar@<0.5ex>[u]^{D_s^{-1}}
& W^{0,\infty},
}
\end{equation*}	
where $\pi_0$ is the $L^2(\!\ud s)$-orthogonal projection, $\pi_1$ is the $L^2(\!\ud \theta)$-orthogonal projection, $\iota_0$ and $\iota_1$ are inclusions. Note: the space $W^{0,\infty}_0$ depends on $c$.
\end{lemma}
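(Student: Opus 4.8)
The plan is to unwind what the lemma actually asserts: that every arrow in the diagram is well defined (lands in the indicated space), that the middle vertical pair $D_s$ and $D_s^{-1}$ between $W^{1,\infty}_0$ and $W^{0,\infty}_0$ are mutually inverse bounded linear isomorphisms, and that the four squares commute. The last point will turn out to be a one-line check once the first two are in place. Two elementary facts drive everything: first, $1/|c_\theta| \in L^\infty$ because $c \in \ImmL$ implies $\essinf_\theta|c_\theta|>0$, so that $D_s = \tfrac{1}{|c_\theta|}\partial_\theta$ is a bounded operator $W^{1,\infty}\to W^{0,\infty}$ of norm at most $(\essinf_\theta|c_\theta|)^{-1}$; second, periodicity of $h$ on $S^1$ forces $\int_{S^1} h_\theta \ud\theta = 0$, equivalently $\int_{S^1} D_s h\ud s = 0$.

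First I would record well-definedness. For $h\in W^{1,\infty}$ one has $D_sh = h_\theta/|c_\theta|\in L^\infty = W^{0,\infty}$, and by the periodicity remark $\int_{S^1}D_sh\ud s = \int_{S^1}h_\theta\ud\theta = 0$, so in fact $D_sh \in W^{0,\infty}_0$; this simultaneously shows that $D_s$ restricts to $W^{1,\infty}_0 \to W^{0,\infty}_0$ and that the top-left composite $\pi_0\circ D_s$ is just $D_s$ regarded as a map into $W^{0,\infty}_0$. The maps $\pi_1 h = h - \tfrac{1}{2\pi}\int_{S^1}h\ud\theta$ and $\pi_0 q = q - \tfrac{1}{\ell_c}\int_{S^1}q\ud s$ are bounded linear projections onto $W^{1,\infty}_0$ and $W^{0,\infty}_0$ respectively; their kernels are the constants, which are precisely the $L^2(\ud\theta)$- and $L^2(\ud s)$-orthogonal complements of the mean-zero subspaces, so these are indeed the stated orthogonal projections. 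Finally $W^{1,\infty}_0$ and $W^{0,\infty}_0$ are kernels of bounded functionals, hence closed subspaces of Banach spaces, hence Banach.

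The substantive step is that $D_s\colon W^{1,\infty}_0 \to W^{0,\infty}_0$ is a bounded linear bijection. Injectivity is immediate: $D_sh = 0$ gives $h_\theta = 0$ a.e., so $h$ agrees a.e. with a constant, which vanishes by the mean-zero constraint. For surjectivity, given $q\in W^{0,\infty}_0$ I would set $\tilde h(\theta) := \int_0^\theta q(u)\,|c_\theta(u)|\ud u$; since $q|c_\theta|\in L^\infty([0,2\pi])\subset L^1([0,2\pi])$, the function $\tilde h$ is absolutely continuous, hence lies in $W^{1,\infty}$ with weak derivative $q|c_\theta|$, and it descends to $S^1$ because $\tilde h(2\pi)-\tilde h(0) = \int_{S^1}q\ud s = 0$; then $h := \pi_1\tilde h \in W^{1,\infty}_0$ satisfies $D_sh = D_s\tilde h = q$. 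Boundedness of the inverse follows either from the open mapping theorem or directly from the estimate $\|h\|_{W^{1,\infty}}\le C(\|c_\theta\|_{L^\infty})\,\|q\|_{L^\infty}$ read off the formula for $\tilde h$. This defines $D_s^{-1}$, and by construction $D_s^{-1}\circ D_s = \Id$ on $W^{1,\infty}_0$ and $D_s\circ D_s^{-1} = \Id$ on $W^{0,\infty}_0$.

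The remaining commutativities are then immediate. The two right-hand squares commute because $D_s$ is literally the same differential operator on $W^{1,\infty}_0$ and on $W^{1,\infty}$, i.e.\ $D_s\circ\iota_1 = \iota_0\circ D_s$. The left-hand square commutes because $D_s\circ\pi_1 h = D_s(h-\text{const}) = D_sh$ while $\pi_0\circ D_sh = D_sh$ by the well-definedness step, so both composites equal $D_s$ into $W^{0,\infty}_0$. I do not expect a genuine obstacle here; the only point needing a little care is the construction of $\tilde h$ in the surjectivity step, where the closedness condition $\int_{S^1}q\ud s = 0$ is exactly what makes the primitive periodic, and absolute continuity of the Lebesgue integral is what guarantees $\tilde h$ is genuinely Lipschitz.
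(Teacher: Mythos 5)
Your proof is correct and takes essentially the same route as the paper, whose proof simply records the explicit formulas for $D_s$, $D_s^{-1}$, $\pi_1$, and $\pi_0$ and leaves the verification to the reader; your $h=\pi_1\tilde h$ with $\tilde h(\theta)=\int_0^\theta q\,|c_\theta|\ud\eta$ is exactly the paper's stated formula for $D_s^{-1}$. You merely spell out the periodicity and mean-zero bookkeeping and the bijectivity argument that the paper leaves implicit.
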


\begin{proof}
The commutativity of the diagram, including the existence of $D_s^{-1}$, can be verified using the explicit formulas
\begin{align*}
(D_s h)(\theta) &= h_\theta(\theta) |c_\theta(\theta)|^{-1}\,, 
\\
(D_s\inv k)(\theta) &= \int_0^\theta k(\eta) |c_\theta(\eta)| \ud \eta - \frac{1}{2\pi}\int_{S^1} \int_0^\zeta k(\eta) |c_\theta(\eta)| \ud \eta \ud \zeta\,,
\\
(\pi_1 h)(\theta) &= h(\theta) - \frac{1}{2\pi}\int_{S^1} h(\eta) \ud \eta\,,
\\
(\pi_0 h)(\theta) &= h(\theta) - \frac{1}{2\pi}\int_{S^1} h(\eta) |c_\theta(\eta)| \ud \eta\,,
\end{align*}
where $\theta \in [0,2\pi)$. 
\end{proof}

We recall that geodesics are critical points of the energy functional. Under general weak Riemannian metrics the geodesic equation might not exist, i.e., it might not be possible to express the first-order condition for critical points as a differential equation of second order in time. This is not the case for the metric $G$, as we will show now. Our proof avoids second derivatives and therefore allows us to work on the space of Lipschitz immersions. The theorem is consistent with the geodesic equation derived in \cite[App. I]{younes2008metric} for smooth immersions. This can be seen from the relation $D_s^{-2} (\ka n) = c$. 

\begin{theorem}\label{thm:m:geo}
The geodesic equation of the weak Riemannian metric $G$ on $\ImmL_0$ exists and is given by
\begin{multline}\label{equ:m:geo}
c_{tt}=
G_c(c,c_t) c_t -\frac12 G_c(c_t,c_t)c 
\\
+ D_s^{-1}\pi_0\! \left(\langle D_s c,D_s c_{t}\rangle D_s c_{t}-\frac12 | D_s c_t|^2 D_s c\right),
\end{multline}
where $D_s^{-1}$ and $\pi_0$ are defined in \autoref{lem:m:Ds}.
\end{theorem}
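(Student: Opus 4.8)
The plan is to derive the geodesic equation directly as the first-order condition for critical points of the energy functional $E(c) = \frac12 \int_0^1 G_{c(t)}(c_t,c_t)\,dt$, being careful to phrase every computation in terms of $c$, $c_\theta$, $c_t$ and $c_{t\theta}$ only, so that no second space-derivatives of $c$ appear and the argument stays within the Banach manifold $\ImmL_0$. Concretely, I would take a smooth variation $c(t,\varepsilon)$ with $c_\varepsilon(t,\cdot) =: m(t,\cdot) \in \Lip_0$ a fixed-endpoints variation field, set $\partial_\varepsilon|_0 E = 0$, and use the explicit formula $G_c(h,k) = \frac{1}{\ell_c}\int_{S^1} |c_\theta|^{-1}\langle h_\theta, k_\theta\rangle\,d\theta$ to expand everything.

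The key steps, in order, are as follows. First I would compute the variation of the metric coefficient: the derivative of $|c_\theta|^{-1}$ in the direction $m$ is $-|c_\theta|^{-3}\langle c_\theta, m_\theta\rangle = -|c_\theta|^{-1}\langle D_s c, D_s m\rangle$, and the derivative of $\ell_c^{-1}$ is $-\ell_c^{-1}\int_{S^1}\langle D_s c, D_s m\rangle\,ds$, i.e.\ $-\ell_c^{-1}\, \ell_c\, G_c(c,m)$ after dividing out. Second, I would differentiate $G_{c}(c_t,c_t)$ in $t$ and in $\varepsilon$, collect the terms, and integrate by parts in $t$ to move the $t$-derivative off $m$; the fixed-endpoints assumption kills the boundary term. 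This produces an identity of the form $\int_0^1 G_c(c_{tt}, m)\,dt = \int_0^1 \big(\text{explicit bilinear-in-}c_t\text{ expression paired with }m\big)\,dt$, where the right-hand side, after an integration by parts in $\theta$ to move a $D_s$ off $m$, must be rewritten as $G_c$ of something against $m$. Third — and this is where \autoref{lem:m:Ds} enters — I would recognize that pairing against $m_\theta$ (equivalently against $D_s m$ in $L^2(ds)$) and then inverting $D_s$ requires projecting the ``force'' term $\langle D_s c, D_s c_t\rangle D_s c_t - \tfrac12 |D_s c_t|^2 D_s c$ onto $W^{0,\infty}_0$ via $\pi_0$ (so that it lies in the domain of $D_s^{-1}$) and then applying $D_s^{-1}$; the zeroth-order terms $G_c(c,c_t)c_t - \tfrac12 G_c(c_t,c_t)c$ arise precisely from the variations of $\ell_c^{-1}$ and from the $c$-component that the projection $\pi_0$ subtracts off. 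Since $m \in \Lip_0$ is arbitrary and $G$ is non-degenerate on $\ImmL_0$ by \autoref{lem:m:smooth}, matching the two sides against all $m$ yields \eqref{equ:m:geo}. Finally I would check that the right-hand side of \eqref{equ:m:geo} is a smooth (in fact real-analytic, using \autoref{cor:a:smooth}) map $\ImmL_0 \times \Lip_0 \times \Lip_0 \to \Lip_0$, which is what ``the geodesic equation exists'' means: the spray is a well-defined vector field on the Banach manifold $T\ImmL_0$.

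The main obstacle I anticipate is the bookkeeping around the two different orthogonal projections and the non-local operator $D_s^{-1}$: the natural pairing coming from the energy variation is $\langle\, \cdot\, , D_s m\rangle_{L^2(ds)}$, and to convert ``$=G_c(\text{rhs}, m)$'' one must legitimately write $D_s m = \pi_0 D_s m$ (which is automatic since $\int D_s m\,ds = \int m_\theta\,d\theta = 0$ for $m\in\Lip_0$) and then use the adjointness of $D_s^{-1}$ and $\pi_0$ on the relevant subspaces, exactly as encoded in the commutative diagram of \autoref{lem:m:Ds}. One must also verify that all the integration-by-parts steps in $\theta$ are valid at Lipschitz regularity — i.e.\ that the relevant products lie in $W^{1,1}$ and the periodic boundary terms cancel — rather than assuming the smoothness used in \cite{younes2008metric}. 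Once the correct subspaces and projections are identified, the remaining computation is a routine, if lengthy, manipulation, and the consistency check $D_s^{-2}(\kappa n) = c$ mentioned before the statement confirms agreement with the classical formula in the smooth category.
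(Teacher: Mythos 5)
Your proposal follows essentially the same route as the paper: vary the energy, integrate by parts in $t$ only, and use the identity $\frac{1}{\ell_c}\int_{S^1}\langle k, D_s h\rangle\ud s = G_c(D_s^{-1}\pi_0 k, h)$ from \autoref{lem:m:Ds} to read the variation off as $G_c(\cdot,m)$, concluding by non-degeneracy of $G$. The one correction: no integration by parts in $\theta$ is performed (or needed) --- the variation already pairs the force term against $D_s m$, which is exactly why the argument survives at Lipschitz regularity --- and your closing paragraph in fact describes this correct mechanism, superseding the earlier remark about moving a $D_s$ off $m$.
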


\begin{proof}
The Riemannian energy of a path $c = c(t, \th)$ is
\begin{equation*}
E(c)=
\frac12  \int_{0}^1 G_c(c_t,c_t)\ud t 
=
\frac12\int_{0}^1\frac{1}{\ell_c} \int_{S^1} \langle c_{t\theta}, c_{t\theta} \rangle 
\frac{1}{|c_\theta|}\ud\theta \ud t\,.
\end{equation*}
Varying $c$ in the direction $h = h(t,\th)$ with $h(0)=h(1)=0$ yields
\begin{align*}
\ud E(c).h
=&\frac12\int_0^1\bigg(
-\frac{1}{\ell_c^2}\int_{S^1}\frac{\langle c_{\theta}, h_{\theta} \rangle}{|c_{\theta}|}\ud\theta
\int_{S^1} \frac{\langle c_{t\theta}, c_{t\theta} \rangle}{|c_{\theta}|}\ud\theta
\\&\qquad
 +\frac{2}{\ell_c} \int_{S^1} 
\frac{\langle c_{t\theta}, h_{t\theta} \rangle}{|c_\theta|}\ud\theta
-\frac1{\ell_c} \int_{S^1} \langle c_{t\theta}, c_{t\theta} \rangle 
\frac{\langle c_\theta,h_\theta\rangle}{|c_\theta|^3}\ud\theta
\bigg) \ud t
\,.
\end{align*}
In the second integral, integration by parts with respect to $t$ can be used to eliminate the 
time-derivative of $h$:
\begin{multline*}
\int_0^1\frac{2}{\ell_c} \int_{S^1} 
\frac{\langle c_{t\theta}, h_{t\theta} \rangle}{|c_\theta|}\ud\theta \ud t
=
\int_0^1 \bigg( \frac{2}{\ell_c^2}\int_{S^1}
\frac{\langle c_{\theta}, c_{t\theta} \rangle}{|c_\theta|} \ud\theta
\int_{S^1} \frac{\langle c_{t\theta}, h_{\theta} \rangle}{|c_\theta|}\ud\theta
\\
-\frac{2}{\ell_c} \int_{S^1}  
\frac{\langle c_{tt\theta}, h_{\theta} \rangle}{|c_\theta|}\ud\theta
+\frac{2}{\ell_c} \int_{S^1} \langle c_{t\theta}, h_{\theta} \rangle 
\frac{\langle c_\theta,c_{t\theta}\rangle}{|c_\theta|^3}\ud\theta \bigg) \ud t
\end{multline*}
Note that the boundary terms vanish because $h(1)=h(0)=0$. Thus, 
\begin{align*}
\ud E(c).h
=&\frac12\int_0^1\bigg(
-\frac{1}{\ell_c^2}\int_{S^1}\frac{\langle c_{\theta}, h_{\theta} \rangle}{|c_{\theta}|}\ud\theta
\int_{S^1} \frac{\langle c_{t\theta}, c_{t\theta} \rangle}{|c_{\theta}|}\ud\theta
\\&
{}+\frac{2}{\ell_c^2}\int_{S^1}
\frac{\langle c_{\theta}, c_{t\theta} \rangle}{|c_\theta|} \ud\theta
\int_{S^1} \frac{\langle c_{t\theta}, h_{\theta} \rangle}{|c_\theta|}\ud\theta
-\frac{2}{\ell_c} \int_{S^1}  
\frac{\langle c_{tt\theta}, h_{\theta} \rangle}{|c_\theta|}\ud\theta
\\&
+\frac{2}{\ell_c} \int_{S^1} \langle c_{t\theta}, h_{\theta} \rangle 
\frac{\langle c_\theta,c_{t\theta}\rangle}{|c_\theta|^3}\ud\theta 
-\frac1{\ell_c} \int_{S^1} \langle c_{t\theta}, c_{t\theta} \rangle 
\frac{\langle c_\theta,h_\theta\rangle}{|c_\theta|^3}\ud\theta
\bigg) \ud t
\,.
\end{align*}
In terms of $D_s$ and $\mathrm{d}s$ this reads as
\begin{align*}
\ud E(c).h
=&\frac12\int_0^1\bigg(
-\frac{1}{\ell_c^2}\int_{S^1}\langle D_s c,D_s h \rangle \ud s
\int_{S^1} \langle D_s c_{t}, D_s c_{t} \rangle\ud s
\\&
+\frac{2}{\ell_c^2}\int_{S^1}\langle D_s c, D_s c_{t} \rangle \ud s
\int_{S^1} \langle D_s c_{t},D_s h \rangle \ud s
-\frac{2}{\ell_c} \int_{S^1} \langle D_s c_{tt}, D_s h \rangle \ud s
\\&
+\frac{2}{\ell_c} \int_{S^1} \langle D_s c_{t}, D_s h \rangle 
\langle D_s c,D_s c_{t}\rangle\ud s
\\&
-\frac1{\ell_c} \int_{S^1} \langle D_s c_{t}, D_s c_{t} \rangle 
\langle D_s c,D_s h\rangle\ud s
\bigg) \!\ud t
\,.
\end{align*}
For the last two summands we will use the following relation, which follows from the definition of the metric $G$ and of the mappings $D_s\inv$ and $\pi_0$ of \autoref{lem:m:Ds}: it holds for all $k \in \Lip$ that
\begin{align*}
\frac{1}{\ell_c} \int_{S^1} \langle k,D_s h\rangle \ud s
&=
\frac{1}{\ell_c} \int_{S^1} \langle \pi_0 k,D_s h\rangle \ud s
\\&=
\frac{1}{\ell_c} \int_{S^1} \langle D_sD_s^{-1}\pi_0 k,D_s h\rangle \ud s
=
G_c(D_s^{-1}\pi_0k,h)\,.
\end{align*}
This allows us to rewrite $\ud E(c).h$ as
\begin{align*}
\ud E(&c).h
=\frac 12 \int_0^1\bigg( 
-G_c(c,h) G_c(c_t,c_t)+2G_c(c,c_t) G_c(c_t,h)-2 G_c(c_{tt},h)
\\&
+G_c\Big( D_s^{-1}\pi_0 \big(2\langle D_s c,D_s c_{t}\rangle D_s c_{t}-\langle D_s c_{t},D_s c_{t}\rangle D_s c\big), h \Big)
\!\bigg)\!\ud t\,.
\end{align*}
Therefore $\ud E(c).h = 0$ if and only if~\eqref{equ:m:geo} is satisfied.
\end{proof}

The well-posedness of the geodesic equation in the smooth category and on Sobolev immersions of order $k > 5/2$ has been shown in \cite{younes2008metric}. Here we extend this result to Lipschitz immersions. Our proof also carries over to the space of Sobolev immersions of order $k>\frac32$. 

\begin{theorem}\label{thm:m:exp}
The initial value problem for the geodesic equation \eqref{equ:m:geo}
has unique local solutions in the Banach manifold $\mc I^{1,\infty}_0$. The solutions depend smoothly on $t$ and on the initial
conditions $c(0,\cdot)$ and $c_t(0,\cdot)$.
Moreover, the Riemannian exponential mapping $\exp$ exists 
and is smooth on a neighborhood of the zero section in the tangent bundle, and the map $(c, h) \mapsto (c, \exp_c(h))$ is a local diffeomorphism from a (possibly smaller) neighborhood of the zero section to a neighborhood of the diagonal in the product $\mc I^{1,\infty}_0 \times \mc I^{1,\infty}_0$.
\end{theorem}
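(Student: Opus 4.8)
The plan is to recast the geodesic equation \eqref{equ:m:geo} as an ODE on the Banach manifold $\mc I^{1,\infty}_0 \times \Lip_0$ and to apply the Banach-space version of the Picard--Lindel\"of theorem (existence, uniqueness, and smooth dependence on time and initial conditions for ODEs with a smooth right-hand side), exactly in the spirit of the Ebin--Marsden argument referenced in the introduction. Writing the state as $(c,u)$ with $u=c_t$, the system reads $c_t = u$, $u_t = F(c,u)$, where
\begin{equation*}
F(c,u) = G_c(c,u)\,u - \tfrac12 G_c(u,u)\,c + D_s^{-1}\pi_0\!\left(\langle D_s c, D_s u\rangle D_s u - \tfrac12 |D_s u|^2 D_s c\right).
\end{equation*}
The crux is therefore to show that $F\colon \mc I^{1,\infty}_0 \times \Lip_0 \to \Lip_0$ is a smooth map of Banach manifolds; once this is established, the abstract ODE theorem yields the unique local flow, its smooth dependence on $t$ and on $(c(0),c_t(0))$, and — by the standard argument using the rescaling $\exp_c(h) = c(1)$ for the geodesic with $c(0)=c$, $c_t(0)=h$, together with the identity $T_0\exp_c = \Id$ and the inverse function theorem in Banach spaces applied to $(c,h)\mapsto(c,\exp_c(h))$ — the existence and smoothness of $\exp$ and the local-diffeomorphism statement near the zero section.

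The main obstacle is the smoothness of $F$, and within it the handling of the two building blocks that involve $c$ nonlinearly: the metric coefficient $G_c$ and the operator $D_s^{-1}\pi_0$. For $G_c$, one notes that $(c,h,k)\mapsto G_c(h,k)$ involves only $\ell_c=\int_{S^1}|c_\theta|\ud\theta$ and $\int_{S^1}|c_\theta|^{-1}\langle h_\theta,k_\theta\rangle\ud\theta$; since $c\in\mc I^{1,\infty}_0$ has $\essinf|c_\theta|>0$ (an open condition by \autoref{thm:m:manifold}), the scalar map $q\mapsto |q|$ and $q\mapsto|q|^{-1}$ are smooth on the relevant bounded-away-from-zero region of $L^\infty$, and composition with $c\mapsto c_\theta$ (bounded linear $\Lip\to L^\infty$) together with boundedness of the integral functionals gives smoothness — this is precisely the content of the appendix result \autoref{cor:a:smooth} and \autoref{cor:a:smooth} that \autoref{lem:m:smooth} already invoked. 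For $D_s^{-1}\pi_0$, the explicit formulas in \autoref{lem:m:Ds} express it through $c$ only via $|c_\theta|$, again smoothly in $c\in\mc I^{1,\infty}_0$; one must check that the output lands in $\Lip_0=W^{1,\infty}_0$, which follows because $D_s^{-1}$ produces a primitive of an $L^\infty$ function (hence $W^{1,\infty}$) and $\pi_0$ enforces the mean-zero condition that makes the primitive well-defined on $S^1$, while the argument of $\pi_0$ in $F$, namely $\langle D_s c,D_s u\rangle D_s u - \tfrac12|D_s u|^2 D_s c$, is a smooth $L^\infty$-valued expression in $(c,u)$.

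Two further routine points complete the plan. First, one checks that $F$ indeed maps into $\Lip_0$ rather than merely $\Lip$: the first two terms are scalar multiples of $u\in\Lip_0$ and $c\in\Lip_0$, and the third lies in $W^{1,\infty}_0$ by the mapping property of $D_s^{-1}$; hence the flow stays on $\mc I^{1,\infty}_0\times\Lip_0$, consistent with the tangent-bundle description $\ImmL_0\times\Lip_0$ from \autoref{thm:m:manifold}. Second, for the exponential map one uses the homogeneity of the geodesic equation — if $t\mapsto c(t)$ is a geodesic then so is $t\mapsto c(\la t)$ after rescaling the initial velocity by $\la$ — to deduce that $\exp_c(h)$ is defined for $h$ in a neighborhood of $0$ in $\Lip_0$, is smooth there by smooth dependence on initial conditions, and satisfies $\exp_c(0)=c$, $d(\exp_c)_0 = \Id_{\Lip_0}$; the inverse function theorem on Banach manifolds then gives that $(c,h)\mapsto(c,\exp_c(h))$ is a diffeomorphism from a neighborhood of the zero section onto a neighborhood of the diagonal. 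The remark about Sobolev immersions of order $k>\tfrac32$ is obtained by running the identical argument with $\Lip$ replaced by $H^k$, the only change being that one invokes the Banach algebra property and the analogous smoothness of $q\mapsto|q|^{-1}$ on $H^{k-1}$, which holds for $k-1>\tfrac12$.
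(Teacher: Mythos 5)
Your proposal is correct and follows essentially the same route as the paper: rewrite the geodesic equation as a first-order ODE on $T\ImmL_0$, establish smoothness of the Christoffel expression via the smooth dependence of $|c_\theta|$, $|c_\theta|^{-1}$, and the operators of \autoref{lem:m:Ds} on $c$ (i.e.\@ \autoref{cor:a:smooth}), apply Picard--Lindel\"of in the Banach setting, and obtain the exponential map and its local invertibility by the standard homogeneity plus inverse-function-theorem argument. The only nuance the paper makes explicit that you treat implicitly is that $W^{0,\infty}_0$ is a $c$-dependent space, handled there via the global bundle chart $(c,h)\mapsto(c,h|c_\theta|)$; your observation that $D_s^{-1}\pi_0$ lands in the fixed space $W^{1,\infty}_0$ covers the same point.
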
 

\begin{proof}
We interpret the geodesic equation as an ODE on the Banach manifold $T\ImmL_0$,
\begin{equation*}\left\{\begin{aligned}
c_t &= u\,, \\
u_t &= \Ga_c(u,u)\,,
\end{aligned}\right.\end{equation*}
where the Christoffel symbol $\Ga_c(h,h)$ is given by
\[
\Ga_c(u,u) = G_c(c,u) u -\frac12 G_c(u,u)c +
 D_s^{-1}\pi_0\! \left(\langle D_s c,D_s u\rangle D_s u-\frac12 | D_s u|^2 D_s c\right).
\]
The map $(c,u) \mapsto \Gamma_c(u,u)$ is smooth because all spaces and mappings in the diagram in \autoref{lem:m:Ds} depend smoothly on $c$ in the following sense: the spaces in the diagram are fibers of smooth vector bundles over $\ImmL_0$, and the mappings in the diagram are smooth bundle homomorphisms. This follows from \autoref{cor:a:smooth} using the global vector bundle chart $(c,h)\mapsto (c,h |c_\theta|)$ for $W^{0,\infty}_0$. Hence we obtain short time existence of solutions of the geodesic equation by the theorem of Picard-Lindel\"of. Furthermore, the solutions depend smoothly on the initial values. The local invertibility of the exponential map follows by standard arguments and the implicit function theorem.
\end{proof}

\section{The submanifold of piecewise linear curves}\label{sec:p}

\begin{definition}\label{def:p:piecewise}
Let $n \in \mathbb N_{>0}$ and $0 = \theta^1<\ldots< \theta^{n+1} = 2\pi$ be fixed such that $|\theta^{i+1}-\theta^i|=2\pi/n$ for all $i \in \{1,\dots,n\}$. Then $\theta^1$ and $\theta^{n+1}$ are equal as elements of $S^1=\R/(2\pi\mathbb Z)$. We write $[\theta^i,\theta^{i+1}]$ for the interval in both $\R$ and $S^1$, and we use the word ``piecewise'' to mean piecewise with respect to the grid $\theta^i$. We let $\mathcal P^0$ denote the set of piecewise constant left-continuous functions in $W^{0,\infty}$, $\mathcal P^1$ the set of piecewise linear functions in $W^{1,\infty}$, and $\mathcal P\mathcal I^1$ the set of piecewise linear immersions in $\ImmL$. We use subscripts $0$ to denote intersections with $W^{0,\infty}_0$, $W^{1,\infty}_0$, and $\ImmL_0$, respectively. 
For each curve $c \in \mathcal P^1$ we set
\begin{align*}
\ell^i &= |c(\theta^{i+1})-c(\theta^i)|\,, &
\la^i &= \sum_{j=i}^n \ell^j\,.
\end{align*}
\end{definition}

We now present a discrete counterpart of \autoref{lem:m:Ds},  describing the operators $D_s$ and $D_s^{-1}$ on the discretized spaces of curves.

\begin{lemma}\label{lem:p:Ds}
For each $c \in \mathcal P\mathcal I^1$ the following diagram is commutative,
\begin{equation*}
\xymatrix{
\mathcal P^1 \ar@{->>}[r]^-{\pi_1} \ar[d]^{D_s}
& \mathcal P^1_0 := \left\{h \in \mathcal P^1: \int h\ud \theta =0\right\} \ar@{^{(}->}[r]^-{\iota_1} \ar@<0.5ex>[d]^{D_s} 
& \mathcal P^1 \ar[d]^{D_s}
\\
\mathcal P^0 \ar@{->>}[r]^-{\pi_0}
& \mathcal P^0_0 := \left\{k \in \mathcal P^0: \int k\ud s =0\right\} \ar@{^{(}->}[r]^-{\iota_0} \ar@<0.5ex>[u]^{D_s^{-1}}
& \mathcal P^0,
}
\end{equation*}	
where $\pi_0$ is the $L^2(\!\ud s)$-orthogonal projection, $\pi_1$ is the $L^2(\!\ud \theta)$-orthogonal projection, $\iota_0$ and $\iota_1$ are inclusions. Note that the space $\mathcal P^0_0$ depends on $c$. 
\end{lemma}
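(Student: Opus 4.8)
The plan is to mimic the proof of \autoref{lem:m:Ds}, using the explicit formulas given there, and simply to check that each map in the diagram restricts correctly to the piecewise spaces. The key observation is that the relevant subspaces $\mathcal P^1,\mathcal P^0$ are finite-dimensional and that the explicit formulas for $D_s$, $D_s^{-1}$, $\pi_0$, $\pi_1$ from \autoref{lem:m:Ds} are linear in their arguments, so the only thing to verify is that they map piecewise-linear (resp.\ piecewise-constant) functions to piecewise-linear (resp.\ piecewise-constant) functions when $c\in\mathcal P\mathcal I^1$. All algebraic identities encoded in the diagram (commutativity, $D_s D_s^{-1}=\Id$ on the middle column, etc.) then follow immediately from \autoref{lem:m:Ds} by restriction, since $\mathcal P^1\subseteq W^{1,\infty}$ and $\mathcal P^0\subseteq W^{0,\infty}$.

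First I would check the left vertical arrow $D_s\colon\mathcal P^1\to\mathcal P^0$. For $c\in\mathcal P\mathcal I^1$ the function $|c_\theta|$ is piecewise constant (equal to $\ell^i n/(2\pi)$ on $[\theta^i,\theta^{i+1}]$, using \autoref{def:p:piecewise}), and for $h\in\mathcal P^1$ the derivative $h_\theta$ is piecewise constant; hence $(D_sh)(\theta)=h_\theta(\theta)|c_\theta(\theta)|^{-1}$ is piecewise constant, i.e.\ lies in $\mathcal P^0$. One must fix the left-continuity convention so that $D_s$ lands in $\mathcal P^0$ as defined; this is harmless since the values at the grid points form a measure-zero set. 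Next, the projections: $\pi_1h = h - \frac{1}{2\pi}\int h\ud\theta$ subtracts a constant, which preserves $\mathcal P^1$; and $\pi_0k = k - \frac{1}{2\pi}\int k|c_\theta|\ud\theta$ subtracts a constant, which preserves $\mathcal P^0$. So $\pi_1$ maps $\mathcal P^1$ onto $\mathcal P^1_0$ and $\pi_0$ maps $\mathcal P^0$ onto $\mathcal P^0_0$, and the inclusions $\iota_0,\iota_1$ are obvious.

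The one genuinely nontrivial point is the map $D_s^{-1}\colon\mathcal P^0_0\to\mathcal P^1_0$. From the formula in \autoref{lem:m:Ds}, $(D_s^{-1}k)(\theta)=\int_0^\theta k(\eta)|c_\theta(\eta)|\ud\eta - \text{const}$. When $k\in\mathcal P^0$ and $|c_\theta|$ is piecewise constant, the integrand $k|c_\theta|$ is piecewise constant, so its antiderivative $\theta\mapsto\int_0^\theta k(\eta)|c_\theta(\eta)|\ud\eta$ is continuous and piecewise linear with respect to the grid $\theta^i$; subtracting a constant keeps it piecewise linear, so $D_s^{-1}k\in\mathcal P^1$. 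That it lands in $\mathcal P^1_0$ and inverts $D_s$ on the middle column is inherited from \autoref{lem:m:Ds}. I expect this antiderivative step to be the main (and only) obstacle, and it is mild: the essential fact is simply that the primitive of a piecewise-constant function on this grid is piecewise linear on the same grid.

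Finally, commutativity of the whole diagram — both squares and the identity $\iota_0\pi_0 D_s = D_s\iota_1\pi_1$ expressed implicitly, as well as $D_s^{-1}$ being a two-sided inverse of the middle $D_s$ on $\mathcal P^1_0\rightleftarrows\mathcal P^0_0$ — follows verbatim from \autoref{lem:m:Ds} once we know each map restricts as claimed, because $\mathcal P^1$ and $\mathcal P^0$ are just linear subspaces of $W^{1,\infty}$ and $W^{0,\infty}$ and the maps here are the restrictions of the maps there. Thus the proof reduces to the three restriction checks above, and no new computation is needed.
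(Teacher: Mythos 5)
Your proposal is correct and follows exactly the paper's approach: the paper's proof consists of the single sentence that it is straightforward to verify that the operators of \autoref{lem:m:Ds} restrict to the piecewise spaces, and your three restriction checks (piecewise constancy of $|c_\theta|$, constants being preserved by the projections, and the primitive of a piecewise constant function being piecewise linear) are precisely the verification the paper leaves to the reader.
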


\begin{proof}
It is straight-forward to verify that the operators in \autoref{lem:m:Ds} restrict to the spaces above.
\end{proof}

We will use the following natural identifications with Euclidean spaces.

\begin{definition}\label{def:p:coordinates}
The spaces $\mathcal P^1$ and $\mathcal P^0$ are naturally isomorphic to $\R^{n\times d}$ via the identification of $h \in \mathcal P^1$ and $k \in \mathcal P^0$ with  
\begin{equation*}
\big(h(\theta^1), \dots, h(\theta^n)\big) \in \R^{n\times d}\,, \qquad 
\big(k(\theta^1), \dots, k(\theta^n)\big) \in \R^{n\times d}\,.
\end{equation*}
By duality we get identifications of $(\mathcal P^1)^*$ and $(\mathcal P^0)^*$ with $\R^{n\times d}$ such that the pairing of dual elements is given by the Euclidean scalar product on $\R^{n\times d}$. Under these identifications the spaces $\mathcal P^1_0$, $\mathcal P^0_0$, $(\mathcal P^1_0)^*$, and $(\mathcal P^0_0)^*$, which can be viewed as subspaces using the inclusion mappings $\iota_1$, $\iota_0$, $\pi_1^*$, and $\pi_0^*$, correspond to the following subspaces of $\R^{n\times d}$:
\begin{align*}
\left\{h \in \mathcal P^1_0 \text{ with vertices } (h^i)_{i=1,\dots,n} \vphantom{\sum_{i=1}^n}\right\} &\stackrel{\cong}{\longrightarrow} \left\{h \in \R^{n\times d}: \sum_{i=1}^n h^i =0 \right\}\,,
\\
\left\{k \in \mathcal P^0_0 \text{ with edges } (k^i)_{i=1,\dots,n}\vphantom{\sum_{i=1}^n}\right\} &\stackrel{\cong}{\longrightarrow} \left\{k \in \R^{n\times d}: \sum_{i=1}^n k^i \ell^i =0 \right\}\,,
\\
\left\{\alpha = \sum_{i=1}^n \alpha^i \delta_{\theta^i} \in (\mathcal P^1_0)^*\right\} &\stackrel{\cong}{\longrightarrow} \left\{\alpha \in \R^{n\times d}: \sum_{i=1}^n \alpha^i =0 \right\}\,,
\\
\left\{\beta = \sum_{i=1}^n \beta^i/\ell^i \mathbbm{1}_{[\theta^i,\theta^{i+1})}\ud s \in (\mathcal P^0_0)^*\right\} &\stackrel{\cong}{\longrightarrow} \left\{\beta \in \R^{n\times d}: \sum_{i=1}^n \beta^i =0 \right\}\,.
\end{align*}
Note that the pairing of dual elements is still given by Euclidean scalar products. (Formally this follows from the relations $\pi_0\circ \iota_0=\Id$, $\pi_1\circ \iota_1=\Id$.)
\end{definition}

The following lemma provides explicit expressions of various operators in the Euclidean coordinates of  \autoref{def:p:coordinates}.

\begin{lemma}\label{lem:p:Ds2}
Under the identifications of \autoref{def:p:coordinates}, the following relations hold for each $h \in \mathcal P^1_0$, $k\in \mathcal P^0_0$, $\beta \in (\mathcal P^0_0)^*$, $\alpha \in (\mathcal P^1_0)^*$, and $i \in \{1,\dots,n\}$:
\begin{align*}
(\pi_1 h)^i &=h^i-\frac1n \sum_{j=1}^n h^j\,,
&
(\pi_0 k)^i &= k^i-\frac{1}{\ell_c}\sum_{j=1}^n k^j\ell^j\,,
\\
(D_s h)^i &= \frac{h^{i+1}-h^i}{\ell^i}\,, 
&
(D_s^{-1}k)^i &= \sum_{j=1}^{i-1} k^j\ell^j - \frac1n\sum_{m=1}^n\sum_{j=1}^{m-1}k^j\ell^j\,,\\
(k\ud s)^i &= k^i\ell^i\,,
&
(\beta/\!\ud s)^i &= \beta^i/\ell^i\,,
\\
(D_s^*\beta)^i &= \beta^{i-1}/\ell^{i-1}-\beta^i/\ell^i 
\, &
((D_s^*)^{-1}\alpha)^i &= \Bigg(\frac{1}{\ell_c}\sum_{j=1}^n \al^j \la^j-\sum_{j=1}^i\al^j\Bigg)\ell^i\,.
\end{align*}
\end{lemma}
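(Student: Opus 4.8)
The plan is to verify each of the eight formulas in \autoref{lem:p:Ds2} by unwinding the identifications of \autoref{def:p:coordinates} together with the explicit integral formulas from \autoref{lem:m:Ds}. None of these are deep; the work is bookkeeping, so I would organize it to minimize repetition. First I would fix the convention: an element $h\in\mathcal P^1$ is determined by its vertex values $h^i=h(\theta^i)$, while an element $k\in\mathcal P^0$ is piecewise constant and left-continuous, so $k^i$ is its value on $(\theta^{i-1},\theta^i]$ — equivalently $k^i=k(\theta^i)$ — but for $D_s$ it is more natural to index edges by $[\theta^i,\theta^{i+1})$; I would state once which indexing convention is in force so that the cyclic index $i+1$ (resp. $i-1$) is unambiguous, all indices being read mod $n$. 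Note also $\ud s$ on edge $i$ has total mass $\ell^i=|c(\theta^{i+1})-c(\theta^i)|$, and $\ell_c=\sum_j\ell^j$.

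The calculations then proceed as follows. For $\pi_1$: the formula $(\pi_1h)(\theta)=h(\theta)-\frac1{2\pi}\int_{S^1}h\,\ud\theta$ from \autoref{lem:m:Ds}, evaluated at $\theta^i$, gives $h^i-\frac1{2\pi}\int_{S^1}h\,\ud\theta$, and since $h$ is piecewise linear with nodes spaced $2\pi/n$ apart, $\int_{S^1}h\,\ud\theta=\frac{2\pi}{n}\sum_j\frac{h^j+h^{j+1}}2=\frac{2\pi}{n}\sum_j h^j$, yielding $(\pi_1h)^i=h^i-\frac1n\sum_j h^j$. For $\pi_0$: from $(\pi_0k)(\theta)=k(\theta)-\frac1{2\pi}\int_{S^1}k|c_\theta|\,\ud\theta$ and $\int_{S^1}k|c_\theta|\,\ud\theta=\int_{S^1}k\,\ud s=\sum_j k^j\ell^j$, together with $\frac{2\pi}{2\pi}$ absorbed correctly (here one must be careful: the projection in \autoref{lem:m:Ds} is written with $\frac1{2\pi}$ because $\ud s=|c_\theta|\ud\theta$ was not normalized, but the $L^2(\ud s)$-orthogonal projection onto constants subtracts the $\ud s$-average, which is $\frac1{\ell_c}\int k\,\ud s$) — so I would recheck and present $(\pi_0k)^i=k^i-\frac1{\ell_c}\sum_j k^j\ell^j$. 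For $D_s$: $(D_sh)(\theta)=h_\theta(\theta)/|c_\theta(\theta)|$; on the open edge $(\theta^i,\theta^{i+1})$ the curve $h$ is affine with slope $(h^{i+1}-h^i)/(2\pi/n)$ and $|c_\theta|$ is constant equal to $\ell^i/(2\pi/n)$, so the quotient is $(h^{i+1}-h^i)/\ell^i$. For $D_s^{-1}$: apply the integral formula $(D_s^{-1}k)(\theta)=\int_0^\theta k|c_\theta|\,\ud\eta-\frac1{2\pi}\int_{S^1}\int_0^\zeta k|c_\theta|\,\ud\eta\,\ud\zeta$ at $\theta=\theta^i$; the first term telescopes to $\sum_{j=1}^{i-1}k^j\ell^j$, and for the double integral one uses that $\zeta\mapsto\int_0^\zeta k|c_\theta|$ is piecewise linear with node values $\sum_{j<m}k^j\ell^j$, so its $\theta$-average is $\frac1n\sum_{m=1}^n\sum_{j=1}^{m-1}k^j\ell^j$ (again using equal node spacing and the piecewise-linear trapezoidal identity). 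The density identities $(k\,\ud s)^i=k^i\ell^i$ and $(\beta/\!\ud s)^i=\beta^i/\ell^i$ are immediate from the coordinate descriptions in \autoref{def:p:coordinates}.

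For the adjoint operators I would use duality with respect to the Euclidean pairings identified in \autoref{def:p:coordinates}. To get $D_s^*$ on $(\mathcal P^0_0)^*$: for $h\in\mathcal P^1_0$ and $\beta\in(\mathcal P^0_0)^*$ one has $\langle D_s^*\beta,h\rangle=\langle\beta,D_sh\rangle=\sum_i\beta^i(D_sh)^i=\sum_i\beta^i\frac{h^{i+1}-h^i}{\ell^i}$; reindexing the sum (shifting $i\mapsto i-1$ in the $h^{i+1}$ term, legitimate by periodicity) collects the coefficient of $h^i$ as $\beta^{i-1}/\ell^{i-1}-\beta^i/\ell^i$, giving $(D_s^*\beta)^i=\beta^{i-1}/\ell^{i-1}-\beta^i/\ell^i$ modulo the constraint that both sides lie in the correct subspace (the discrepancy is a constant, which is killed by $\pi_1^*$). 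For $(D_s^*)^{-1}$ on $(\mathcal P^1_0)^*$: either invert the bidiagonal cyclic operator $D_s^*$ directly, or — cleaner — note $(D_s^*)^{-1}=(D_s^{-1})^*$ and pair $(D_s^{-1}k)$ against $\alpha$, then read off the coefficient of $k^i\ell^i$. Using $(D_s^{-1}k)^i=\sum_{j<i}k^j\ell^j-\frac1n\sum_m\sum_{j<m}k^j\ell^j$, the coefficient of $k^i\ell^i$ in $\sum_i\alpha^i(D_s^{-1}k)^i$ is $\sum_{i>i_0}\alpha^i$ (from the first term, namely those $i$ with $i>$ the fixed index) minus $\frac1n\sum_m\sum_{i}\alpha^m\cdot[\![\,\cdot\,]\!]$; carrying this out and using $\la^i=\sum_{j\ge i}\ell^j$ to repackage the nested sums, together with $\sum_i\alpha^i=0$ (since $\alpha\in(\mathcal P^1_0)^*$) to simplify, produces $((D_s^*)^{-1}\alpha)^i=\bigl(\frac1{\ell_c}\sum_j\alpha^j\la^j-\sum_{j\le i}\alpha^j\bigr)\ell^i$. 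The main obstacle, and the only place care is genuinely required, is this last double-sum manipulation for $(D_s^*)^{-1}$: getting the endpoints of the nested summations right, tracking which constraint ($\sum\alpha^i=0$ versus $\sum\beta^i=0$) is used where, and confirming that the resulting vector indeed lies in the subspace cut out by $\sum_i((D_s^*)^{-1}\alpha)^i/\ell^i$-type relations — i.e. that the formula is consistent with $D_s^*\circ(D_s^*)^{-1}=\Id$ on $(\mathcal P^1_0)^*$ rather than merely up to a constant. I would verify that consistency as the closing check.
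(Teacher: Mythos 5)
Your proposal is correct and follows essentially the same route as the paper: every formula is verified by direct computation from \autoref{lem:m:Ds}, \autoref{lem:p:Ds}, and the identifications of \autoref{def:p:coordinates}, the paper's own proof being a terse version of exactly this bookkeeping (it obtains $D_s^{-1}$ by solving the difference equation $k^i=(h^{i+1}-h^i)/\ell^i$ subject to $\sum_i h^i=0$ rather than by evaluating the integral formula at the nodes, and dismisses the adjoints as ``similar calculations''). Your closing verification that $D_s^*\circ(D_s^*)^{-1}=\Id$ on $(\mathcal P^1_0)^*$ and your care with the $\tfrac{1}{2\pi}$-versus-$\tfrac{1}{\ell_c}$ normalization of $\pi_0$ are both sound.
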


\begin{proof}
The formulas for $\pi_1$, $\pi_0$, $D_s$, $\ud s$, and $D_s^*$ follow from \autoref{lem:p:Ds} and \autoref{def:p:coordinates}. The formula for $D_s^{-1}$ can be seen as follows. The relation $k^i=(h^{i+1}-h^i)/\ell^i$ implies that $h^i = \xi+\sum_{j=1}^{i-1}k^j\ell^j$ for some $\xi \in \R^d$. The vector $\xi$ is determined by the condition $\sum_{i=1}^n h^i=0$ and given by $\xi=-\frac1n \sum_{m=1}^n\sum_{j=1}^{m-1}k^j\ell^j$. Similar calculations establish the remaining formulas. 
\end{proof}

The weak Riemannian metric $G$ of \autoref{def:m:metric} can be pulled back to the manifold $\mathcal P\mathcal I^1_0 = \mathcal P^1 \cap \ImmL_0$. This turns $\mathcal P\mathcal I^1_0$ into a Riemannian manifold, which we describe next.

\begin{theorem}
\label{thm:p:metric}
\label{thm:p:mom}
\label{thm:p:cometric}
Under the identifications of \autoref{def:p:coordinates} the metric, momentum mapping, and cometric on $\mathcal P\mathcal I^1_0 \subset \ImmL_0$ are given by 
\begin{align*}
G_c(h,k) &= \frac 1{\ell_c} \sum_{i=1}^n \frac{1}{\ell^i}\langle h^{i+1}-h^i,k^{i+1}-k^i\rangle\,,
\\
\big(\check G_c(h)\big)^i &= \frac{1}{\ell_c}\left(\frac{h^i-h^{i-1}}{\ell^{i-1}}-\frac{h^{i+1}-h^i}{\ell^i}\right)\,,
\\
G^{-1}_c(\alpha,\beta) &= \sum_{i,j=1}^n \big(\lambda^1\lambda^{\max(i,j)}-\lambda^i\lambda^j\big) \langle \alpha^i,\beta^j\rangle\,,
\end{align*}
where $c \in \mathcal P\mathcal I^1_0$, $h,k \in T_c \mathcal P\mathcal I^1_0$, and $\alpha,\beta \in T^*\mathcal P\mathcal I^1_0$.
\end{theorem}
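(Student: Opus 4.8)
The plan is to compute each of the three objects directly from the formulas in \autoref{lem:p:Ds2} and \autoref{def:p:coordinates}. For the metric $G_c(h,k)$, I would start from \autoref{def:m:metric}, note that on $\mathcal P^1$ the arc-length derivative $D_s h$ is the piecewise constant function with $(D_s h)^i = (h^{i+1}-h^i)/\ell^i$ (this is the formula for $D_s$ in \autoref{lem:p:Ds2}), and that $\ud s$ restricted to the $i$-th edge has mass $\ell^i$. Hence $\int_{S^1}\langle D_s h, D_s k\rangle \ud s = \sum_{i=1}^n \langle (h^{i+1}-h^i)/\ell^i,(k^{i+1}-k^i)/\ell^i\rangle \ell^i = \sum_{i=1}^n \frac{1}{\ell^i}\langle h^{i+1}-h^i,k^{i+1}-k^i\rangle$, and dividing by $\ell_c$ gives the first formula. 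No obstacle here; it is a one-line specialization of the integral.

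For the momentum mapping $\check G_c\colon T_c\mathcal P\mathcal I^1_0\to T_c^*\mathcal P\mathcal I^1_0$, I would use the chain of maps in \autoref{lem:p:Ds}: with respect to the identifications, $\check G_c$ should equal $D_s^*$ applied to the $\ud s$-density of $D_s h$, i.e.\ $\check G_c(h) = \tfrac1{\ell_c} D_s^*\big((D_s h)\ud s\big)$, since $G_c(h,k) = \tfrac1{\ell_c}\int \langle D_s h, D_s k\rangle\ud s = \tfrac1{\ell_c}\langle D_s^*((D_s h)\ud s), k\rangle$ by the adjointness built into the diagram. Plugging in $(D_s h)^i = (h^{i+1}-h^i)/\ell^i$, then $((D_sh)\ud s)^i = h^{i+1}-h^i$, and finally applying the formula $(D_s^*\beta)^i = \beta^{i-1}/\ell^{i-1} - \beta^i/\ell^i$ from \autoref{lem:p:Ds2} with $\beta^i = h^{i+1}-h^i$ yields $(\check G_c(h))^i = \tfrac1{\ell_c}\big((h^i-h^{i-1})/\ell^{i-1} - (h^{i+1}-h^i)/\ell^i\big)$, as claimed. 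One should check that the result indeed lies in $(\mathcal P^1_0)^*$, i.e.\ that $\sum_i (\check G_c(h))^i = 0$, which is a telescoping sum.

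For the cometric $G^{-1}_c$, the cleanest route is to invert the relation $\check G_c(h) = \alpha$ directly, rather than to invert the $n\times d$ matrix. Using the diagram of \autoref{lem:p:Ds}, $\check G_c^{-1}$ should be $\ell_c\, D_s^{-1}\pi_0 (D_s^*)^{-1}$ acting on $\alpha\in(\mathcal P^1_0)^*$; equivalently, $G_c^{-1}(\alpha,\beta) = G_c(\check G_c^{-1}\alpha, \check G_c^{-1}\beta)$, but it is simpler to write $G_c^{-1}(\alpha,\beta) = \langle \alpha, \check G_c^{-1}\beta\rangle$. I would compute $\check G_c^{-1}\beta$ by composing the explicit formulas: first $((D_s^*)^{-1}\beta)^i = \big(\tfrac1{\ell_c}\sum_j \beta^j\la^j - \sum_{j=1}^i\beta^j\big)\ell^i$, then apply $\pi_0$ and $D_s^{-1}$ from \autoref{lem:p:Ds2}, keeping track of the factor $\ell_c$. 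Carrying the summations through and simplifying, with repeated use of $\la^i = \sum_{j\ge i}\ell^j$ and $\la^1 = \ell_c$, should collapse the double sums into the kernel $\lambda^1\lambda^{\max(i,j)} - \lambda^i\lambda^j$. The main obstacle I anticipate is precisely this last bookkeeping step: the nested sums $\sum_m\sum_{j<m}$ together with the $\pi_0$-correction term produce several terms that must cancel or recombine, and one has to be careful about whether indices run over edges or vertices and about the role of the constraint $\sum_i\alpha^i=0$ (which lets one drop or add constant-in-$i$ terms freely). A sanity check is that the kernel is symmetric in $i\leftrightarrow j$ and vanishes when paired with $\alpha$ constant in $i$, consistent with $\alpha\in(\mathcal P^1_0)^*$; verifying $\check G_c\circ\check G_c^{-1} = \Id$ on $(\mathcal P^1_0)^*$ gives an independent confirmation.
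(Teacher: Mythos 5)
Your computations of the metric and of the momentum map coincide with the paper's: the metric is the edge-by-edge evaluation of the defining integral, and the momentum is obtained from $\check G_c(h)=\tfrac1{\ell_c}D_s^*(D_sh\ud s)$ (this is \autoref{lem:b:mom}) combined with the coordinate formula for $D_s^*$ from \autoref{lem:p:Ds2}. For the cometric your route is valid but not the one the paper takes, and the difference matters precisely at the step you flag as the anticipated obstacle. You propose the asymmetric pairing $G_c^{-1}(\alpha,\beta)=\langle\alpha,\check G_c^{-1}\beta\rangle$ with $\check G_c^{-1}$ realized as $\ell_c\,D_s^{-1}\circ\pi_0\circ(\cdot/\!\ud s)\circ(D_s^*)^{-1}$; this forces you through the nested sums in the formula for $D_s^{-1}$ and through the $\pi_0$-correction (both of which, as you correctly note, only disappear after pairing against $\alpha$ with $\sum_i\alpha^i=0$; in fact $\pi_0$ acts as the identity here since $(D_s^*)^{-1}\beta/\!\ud s$ already lies in $\mathcal P^0_0$). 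The paper instead invokes the symmetric formula of \autoref{lem:b:cometric},
\begin{equation*}
G_c^{-1}(\alpha,\beta)=\ell_c\int_{S^1}\Big\langle\tfrac{(D_s^*)^{-1}\alpha}{\ud s},\tfrac{(D_s^*)^{-1}\beta}{\ud s}\Big\rangle\ud s
=\ell_c\sum_{k=1}^n\ell^k\Big\langle\Big(\tfrac{(D_s^*)^{-1}\alpha}{\ud s}\Big)^k,\Big(\tfrac{(D_s^*)^{-1}\beta}{\ud s}\Big)^k\Big\rangle,
\end{equation*}
which needs only the single formula $\big(\tfrac{(D_s^*)^{-1}\alpha}{\ud s}\big)^k=\sum_j\al^j(\la^j/\ell_c-\mathbbm{1}_{j\le k})$ and never touches $D_s^{-1}$ or $\pi_0$; the collapse to the kernel $\la^1\la^{\max(i,j)}-\la^i\la^j$ is then the one-line identity $\sum_k\ell^k\mathbbm{1}_{i\le k}\mathbbm{1}_{j\le k}=\la^{\max(i,j)}$ together with $\sum_k\ell^k=\ell_c=\la^1$. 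Both routes prove the theorem; the symmetric one buys you a manifestly symmetric kernel for free and turns the bookkeeping you were worried about into a triviality, while yours would additionally require the telescoping cancellations you describe.
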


\begin{proof}
By \autoref{lem:p:Ds2} we have
\begin{align*}
(D_sh)^i&=\frac{h^{i+1}-h^i}{\ell^i}\,, 
\qquad\qquad
(D_s h \ud s)^i = h^{i+1}-h^i\,, 
\\
(D_s^*(D_s h\ud s))^i &= \frac{h^i-h^{i-1}}{\ell^{i-1}}-\frac{h^{i+1}-h^i}{\ell^i}\,,
\\
\left((D_s^*)^{-1}\alpha\right)^i &= \Bigg(\frac{1}{\ell_c}\sum_{j=1}^n \al^j \la^j-\sum_{j=1}^i\al^j\Bigg)\ell^i\,,
\\
\left(\frac{(D_s^*)^{-1}\alpha}{\!\ud s}\right)^i 
&= \frac{1}{\ell_c}\sum_{j=1}^n \al^j \la^j-\sum_{j=1}^i\al^j 
= \sum_{j=1}^n \al^j \big(\la^j/\ell_c-\mathbbm{1}_{j\leq i}\big) \,.
\end{align*}
Then the formula for the metric follows from
\begin{equation*}
G_c(h,k)=\frac{1}{\ell_c}\int_{S^1} \left\langle D_s h,D_s k \right\rangle \ud s
=\frac{1}{\ell_c} \sum_{i=1}^{n} \ell^i \left\langle (D_s h)^i, (D_s k)^i\right\rangle,
\end{equation*}
and the formula for the momentum mapping from \autoref{lem:b:mom}. Using \autoref{lem:b:cometric} the cometric is given by
\begin{align*}
G_c^{-1}(\alpha,\beta) &= \ell_c \sum_{k=1}^n \ell^k \left\langle\left(\frac{(D_s^*)^{-1}\alpha}{\!\ud s}\right)^k, \left(\frac{(D_s^*)^{-1}\beta}{\!\ud s}\right)^k \right\rangle
\\&= \frac{1}{\ell_c} \sum_{k=1}^n \ell^k \left\langle\sum_{i=1}^n \al^i \big(\la^i-\ell_c\mathbbm{1}_{i\leq k}\big), \sum_{j=1}^n \be^j \big(\la^j-\ell_c \mathbbm{1}_{j\leq k}\big) \right\rangle
\\&= \sum_{i=1}^n \sum_{j=1}^n \langle\al^i, \be^j\rangle \frac{1}{\ell_c} \sum_{k=1}^n \ell^k \big(\la^i-\ell_c\mathbbm{1}_{i\leq k}\big)  \big(\la^j-\ell_c \mathbbm{1}_{j\leq k}\big) 
\\&=
\sum_{i=1}^n \sum_{j=1}^n \langle\al^i, \be^j\rangle \Bigg(\lambda^i\lambda^j-\lambda^i\lambda^j-\lambda^i\lambda^j+\ell_c \sum_{k=\max(i,j)}^n\ell^k \Bigg)
\\&=
\sum_{i=1}^n \sum_{j=1}^n \langle\al^i, \be^j\rangle \left(\lambda^1\lambda^{\max(i,j)}-\lambda^i\lambda^j \right)\,. \qedhere
\end{align*}
\end{proof}

\section{Soliton solutions of the geodesic equation}\label{sec:s}

In this section we establish our two main results. First, we show that piecewise linear curves are a totally geodesic subspace of the space of Lipschitz curves modulo translations. Second, we prove that the geodesic equation admits soliton solutions. We establish this result by showing that the momentum of a curve is a sum of delta distributions if and only if the velocity is piecewise linear up to a reparametrization. 

\begin{theorem}\label{thm:s:totally}
The space $\mathcal P\mathcal I^1_0$ is a totally geodesic submanifold of dimension $(n-1)\times d$ in the manifold $\ImmL_0$ with weak Riemannian metric $G$, for each $n\in\mathbb N\backslash\{0\}$.
\end{theorem}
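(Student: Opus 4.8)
The plan is to verify that the submanifold $\mathcal P\mathcal I^1_0$ is totally geodesic by checking that its second fundamental form vanishes, which by the description of the Christoffel symbol in the proof of \autoref{thm:m:exp} amounts to showing: whenever $c \in \mathcal P\mathcal I^1_0$ and $u \in T_c\mathcal P\mathcal I^1_0 = \mathcal P^1_0$, the acceleration $\Ga_c(u,u)$ again lies in $\mathcal P^1_0$. The dimension count is immediate: $\mathcal P^1_0$ corresponds under \autoref{def:p:coordinates} to $\{h\in\R^{n\times d}:\sum_i h^i=0\}$, which has dimension $(n-1)\times d$. So the content is the stability of the tangent space under the geodesic spray.

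The key step is a term-by-term inspection of
\[
\Ga_c(u,u) = G_c(c,u)\, u - \tfrac12 G_c(u,u)\, c + D_s^{-1}\pi_0\!\left(\langle D_s c, D_s u\rangle D_s u - \tfrac12 |D_s u|^2 D_s c\right).
\]
The first two summands are scalar multiples of $u$ and $c$, both of which are piecewise linear and mean-zero, so they lie in $\mathcal P^1_0$. For the third summand I would argue as follows. Since $c$ and $u$ are piecewise linear, $D_s c$ and $D_s u$ are piecewise constant (elements of $\mathcal P^0$) by \autoref{lem:p:Ds2}; hence so are $\langle D_s c, D_s u\rangle D_s u$ and $|D_s u|^2 D_s c$, because products and inner products of piecewise constant functions (with respect to the same grid) are again piecewise constant. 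Therefore the argument of $\pi_0$ lies in $\mathcal P^0$. By \autoref{lem:p:Ds}, $\pi_0$ maps $\mathcal P^0$ into $\mathcal P^0_0$ and $D_s^{-1}$ maps $\mathcal P^0_0$ into $\mathcal P^1_0$; composing, the whole third term lies in $\mathcal P^1_0$. Adding the three pieces, $\Ga_c(u,u)\in\mathcal P^1_0 = T_c\mathcal P\mathcal I^1_0$, as required.

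It then remains to convert this algebraic stability statement into the geometric conclusion. For this I invoke \autoref{thm:m:exp}: the geodesic through $(c,u)$ with $c\in\mathcal P\mathcal I^1_0$ and $u\in T_c\mathcal P\mathcal I^1_0$ is the unique local solution of the second-order ODE $c_t = u$, $u_t = \Ga_c(u,u)$. Since $\mathcal P\mathcal I^1_0$ is a (finite-dimensional, hence closed, split) submanifold of $\ImmL_0$ and the vector field $(c,u)\mapsto(u,\Ga_c(u,u))$ is tangent to $T\mathcal P\mathcal I^1_0$ by the computation above, the solution starting in $T\mathcal P\mathcal I^1_0$ stays in $T\mathcal P\mathcal I^1_0$ for short time by uniqueness of solutions of ODEs. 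Thus every geodesic of $G$ with initial position and velocity in $\mathcal P\mathcal I^1_0$ remains in $\mathcal P\mathcal I^1_0$, which is precisely the statement that $\mathcal P\mathcal I^1_0$ is totally geodesic. One should also note that the induced metric on $\mathcal P\mathcal I^1_0$ is nondegenerate — this follows from the same argument as in \autoref{lem:m:smooth}, since $\mathcal P^1_0\subset\Lip_0$ — so that "geodesic" has its usual meaning on the submanifold.

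I expect the only genuinely delicate point to be the bookkeeping that $\pi_0$ and $D_s^{-1}$, as defined via the explicit integral formulas in \autoref{lem:m:Ds}, really do preserve the piecewise structure; but this is exactly the content of \autoref{lem:p:Ds}, so it may simply be cited. Everything else is routine: the closure of piecewise constant functions under pointwise products, and the standard ODE-invariance argument for totally geodesic submanifolds.
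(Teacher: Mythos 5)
Your proposal is correct and follows essentially the same route as the paper's proof: both reduce the claim to showing $\Ga_c(u,u)\in\mathcal P^1_0$ for $c\in\mathcal P\mathcal I^1_0$ and $u\in\mathcal P^1_0$, using that $\mathcal P^0$ is closed under pointwise products and that $D_s^{-1}\circ\pi_0$ maps $\mathcal P^0$ into $\mathcal P^1_0$ (\autoref{lem:p:Ds}), and then conclude by the standard ODE-invariance argument. Your write-up is somewhat more explicit about the dimension count, the nondegeneracy of the induced metric, and the uniqueness step, but there is no substantive difference.
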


\begin{proof}
The space $\mathcal P^1_0$ of piecewise linear functions is a finite-di\-men\-sional linear subspace of $\Lip_0$, hence complemented. Thus, the open subset $\mathcal P\mathcal I^1_0$ is a splitting submanifold of $\ImmL_0$. To show that $\mathcal P\mathcal I^1_0$ is totally geodesic we take a tangent vector $h\in \mathcal P^1_0$ with foot point $c\in \mathcal P\mathcal I^1_0$ and consider the right hand side of the geodesic equation,
\begin{multline*}
\Ga_c(h,h) = G_c(c,h) h -\frac12 G_c(h,h)c 
\\+D_s^{-1}\circ\pi_0 \left(\langle D_s c,D_s h\rangle D_s h-\frac12 | D_s h|^2 D_s c\right).
\end{multline*}
The operator $D_s^{-1} \circ \pi_0: \mathcal P^0 \to \mathcal P^1_0$ maps piecewise constant functions to piecewise linear ones. Moreover, $\mathcal P^0$ is an algebra under pointwise multiplication. Thus, we obtain $\Ga_c(h,h) \in \mathcal P^1_0$. It follows that the geodesic equation restricts to an ODE on the submanifold $T\mathcal P\mathcal I^1_0$, showing that $\mathcal P\mathcal I^1_0$ is totally geodesic.
\end{proof}

\begin{remark}\label{rem:totallygeodesic}
The existence of these totally geodesic submanifolds is highly surprising. We are not aware of any reparametrization-invariant metric of order other than one which admits similar totally geodesic subspaces. This is, however, not to say that there are no other totally geodesic subspaces. For example, every geodesic defines a one-dimensional totally geodesic subspace. Moreover, the set of concentric circles with common center $x\in\R^d$ is a totally geodesic submanifold for many metrics \cite{Salvai2011,Michor2006c,Bauer2012a}. This is the case whenever the rotation group acts isometrically on the space of curves, the reason being that the set of concentric circles is the fixed point set of the rotation group. Under some metrics the set of all circles with arbitrary radius and center is also totally geodesic. These spaces are, however, not useful in numerical applications where one needs discretizations of arbitrary curves.
\end{remark}

\begin{remark}
\autoref{thm:s:totally} can be reformulated for the space of closed curves modulo rotations as follows: 
The metric is invariant under the rotation group and thus it induces a
metric on the quotient space such that the projection is a Riemannian submersion, see \cite{younes2008metric}. 
As rotations leave the space of polygons invariant, our results imply that
polygonal curves are also totally geodesic in the quotient space of curves modulo rotations. 
\end{remark}

\begin{remark}
\autoref{thm:s:totally} can be reformulated for open instead of closed curves as follows: 
if $S^1$ is replaced by $[0,2\pi]$, 
$W^{0,\infty}_0$ is redefined as $W^{0,\infty}$, 
and $\pi_0$ and $\iota_0$ are redefined as identity mappings,  
then \autoref{thm:m:manifold}, \autoref{lem:m:smooth}, \autoref{lem:m:Ds}, \autoref{thm:m:geo}, and \autoref{thm:m:exp}
remain valid, the coordinate expressions of \autoref{sec:p} take a  different form, and \autoref{thm:g:geo} remains valid with $(n-1)\times d$ replaced by $n\times d$. 
\end{remark}

\begin{definition}\label{def:s:soliton}
A soliton is a path $c$ in $\ImmL_0$ whose momentum 
is at all times a sum of delta distributions, i.e., one has for each $t$ that
$$\check G_c(c_t)= \frac{1}{\ell_c}D_s^* (D_s h \ud s) = -\frac{1}{\ell_c}D_s^2 (h \ud s)=\sum_{i=1}^n \langle \al^i(t), \de_{\th^i(t)}\rangle$$
with $\al^i(t) \in \R^d$ and $\th^i(t) \in S^1$. More details on the momentum as an element of $(W^{1,\infty})^*$ can be found in 
\autoref{sec:b}.
\end{definition}

The following lemma 
characterizes all velocities whose momenta are sums of delta distributions.

\begin{lemma}\label{lem:s:mom}
For any $c \in \ImmL_0$ and $h \in \Lip_0$, the momentum 
$\check G_c(h)$ is a sum of delta distributions if and only if $h \circ \ph$ is piecewise linear, where $\ph \in W^{1,\infty}(S^1,S^1)$ is such that $c \circ \ph$ has constant speed. 
\end{lemma}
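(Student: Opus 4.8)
The plan is to reduce to the constant-speed case via the reparametrization $\ph$ and then give an explicit description of the momentum in terms of the second distributional derivative of $h\ud s$. First, I would use the reparametrization-invariance of $G$ (established in the remark after \autoref{lem:m:smooth}) together with the definition $\check G_c(h)= -\tfrac{1}{\ell_c}D_s^2(h\ud s)$ in \autoref{def:s:soliton}: writing $\tilde c = c\circ\ph$ and $\tilde h = h\circ\ph$, the momenta $\check G_c(h)$ and $\check G_{\tilde c}(\tilde h)$ are push-forwards of one another under $\ph$, so one is a sum of delta distributions if and only if the other is. Since $\ph$ is a Lipschitz homeomorphism of $S^1$, the singular support is preserved. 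This lets me assume from now on that $c$ has constant speed, i.e.\ $|c_\theta|\equiv \ell_c/(2\pi)$, in which case $D_s = \tfrac{2\pi}{\ell_c}\p_\theta$ and $\ud s = \tfrac{\ell_c}{2\pi}\ud\theta$, so that $\check G_c(h)$ is (a constant multiple of) $-h_{\theta\theta}$ in the sense of distributions on $S^1$.

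The heart of the argument is then the elementary distributional fact: for $h\in W^{1,\infty}(S^1,\R^d)$, the second derivative $h_{\theta\theta}$ is a finite sum of delta distributions $\sum_i \al^i\de_{\theta^i}$ if and only if $h$ is piecewise linear with respect to the breakpoints $\{\theta^i\}$. The "if" direction is immediate by direct computation: a piecewise linear $h$ has $h_\theta$ piecewise constant, hence $h_{\theta\theta} = \sum_i (h_\theta(\theta^i+) - h_\theta(\theta^i-))\de_{\theta^i}$. For the "only if" direction I would argue that if $h_{\theta\theta} = \sum_{i=1}^n \al^i \de_{\theta^i}$, then on each open interval $(\theta^i,\theta^{i+1})$ the distribution $h_{\theta\theta}$ vanishes, so $h$ is affine there; continuity of $h$ (which holds since $h\in W^{1,\infty}\subset C^0$) then forces the affine pieces to match up into a globally continuous piecewise linear function, and the closedness condition $\int h\ud\theta=0$ (membership in $\Lip_0$) together with $\int h_{\theta\theta}\ud\theta = 0$ imposes $\sum_i\al^i = 0$, consistent with \autoref{def:p:coordinates}. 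Undoing the reparametrization, $h\circ\ph$ piecewise linear is exactly the conclusion.

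One subtlety I would need to address carefully: the grid $\{\theta^i\}$ in the conclusion should be allowed to be an \emph{arbitrary} finite set of points in $S^1$, not the uniform grid of \autoref{def:p:piecewise}; the statement of \autoref{lem:s:mom} is about being piecewise linear with respect to \emph{some} partition, so I would phrase the distributional lemma accordingly and only later, in applications, specialize to uniform grids. A second point is the precise meaning of "momentum" as an element of $(W^{1,\infty})^*$ rather than a genuine distribution on $\R^d$-valued test functions; here I would simply invoke \autoref{sec:b} (referenced in \autoref{def:s:soliton}) for the identification, so that $D_s^2(h\ud s)$ pairs with $W^{1,\infty}$ test vector fields in the expected way and the delta-distribution characterization makes sense.

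\textbf{Main obstacle.} The genuinely delicate step is the reduction via $\ph$: I must check that pushing the momentum forward under the Lipschitz (but only Lipschitz, not smooth) change of variables $\ph$ sends sums of delta distributions to sums of delta distributions and back, and that "$h\circ\ph$ piecewise linear" is the correct transported statement — in particular that the breakpoints of $h\circ\ph$ are precisely $\ph^{-1}(\theta^i)$ and that no spurious singularities are created or destroyed at points where $\ph_\theta$ itself jumps. Everything else is routine once the problem is placed on a constant-speed curve where $D_s^2$ is a constant multiple of $\p_\theta^2$.
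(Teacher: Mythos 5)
Your proposal is correct and follows essentially the same route as the paper: reduce to the constant-speed case using reparametrization-invariance (the paper does this by the duality computation $\check G_{c\circ\ph}(h\circ\ph)(k)=\check G_c(h)(k\circ\ph^{-1})$, which sends $\delta_{\theta^i}$ to $\delta_{\ph^{-1}(\theta^i)}$ and disposes of the "main obstacle" you flag), and then observe that for constant speed $\check G_c(h)=-\tfrac{2\pi}{\ell_c^2}h_{\theta\theta}$, whose being a sum of deltas is equivalent to $h$ being piecewise linear. Your extra remarks on arbitrary (non-uniform) breakpoints and on the affine-on-each-interval argument only add detail the paper leaves implicit.
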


\begin{proof}
Let $\ph \in W^{1,\infty}(S^1,S^1)$ be such that $c\circ\ph$ has constant speed. We claim that $\check G_c(h)$ is a sum of delta distributions if and only if $\check G_{c \circ \ph}(h\circ \ph)$ is a sum of delta distributions. To see this, assume that $\check G_c(h)=\sum_{i=1}^n\langle\alpha^i,\delta_{\theta^i}\rangle_{\mathbb R^d}$ for some $\alpha^i \in \R^d$, and let $k:S^1\to\mathbb R^d$ be any smooth function. By the reparametrization-invariance of the metric, 
\begin{align*}
\check G_{c\circ\ph}(h\circ \ph)(k) 
&= 
\check G_{c\circ\ph}(h\circ \ph)(k\circ \ph^{-1}\circ \ph)
=
\check G_c(h)(k\circ \ph^{-1})
\\&=
\sum_{i=1}^n \langle \alpha^i,k(\ph^{-1}(\theta^i))\rangle_{\mathbb R^d}
=
\left(\sum_{i=1}^n\langle\alpha^i,\delta_{\ph^{-1}(\theta^i)}\rangle_{\mathbb R^d}\right)(k).
\end{align*}
Therefore $\check G_{c \circ \ph}(h\circ \ph)$ is a sum of delta distributions. Reversing the argument proves the claim. 

It remains to prove the lemma in the case where $c$ has constant speed and $\ph$ is the identity. Then we have $D_s = 2\pi/\ell_c \p_\th$ and therefore $\check G_c(h) = -2\pi/\ell_c^2 h_{\theta\theta}$. It follows that $\check G_c(h)$ is a sum of delta distributions if and only if $h$ is piecewise linear.
\end{proof}

 If $c$ is a piecewise linear curve, then the map $\ph$ mediating between $c$ and the constant speed reparametrization $c \circ \ph$ is also piecewise linear. In this case the second part of \autoref{lem:s:mom} simplifies to: the momentum $\check G_c(h)$ is a sum of delta distributions if and only if $h$ is piecewise linear. Thus, the tangent space to piecewise linear curves corresponds via $\check G_c$ to momenta that are sums of delta distributions. It is therefore natural to search for soliton solutions of the geodesic equation in the submanifold of piecewise linear curves, which is defined next.

\begin{corollary}\label{cor:s:soliton}
Geodesics $c$ in $\mathcal P\mathcal I^1_0$ are soliton solutions of the geodesic equation, i.e., $\check G_c(c_t)$ is a sum of delta distributions. 	
\end{corollary}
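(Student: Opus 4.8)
The plan is to combine the two main results that precede this corollary. By \autoref{thm:s:totally}, the submanifold $\mathcal P\mathcal I^1_0$ is totally geodesic in $\ImmL_0$; hence any geodesic $c$ of $G$ in $\ImmL_0$ whose initial position and velocity lie in $\mathcal P\mathcal I^1_0$ and $T_c\mathcal P\mathcal I^1_0 = \mathcal P^1_0$ stays in $\mathcal P\mathcal I^1_0$ for all time, with $c(t) \in \mathcal P\mathcal I^1_0$ and $c_t(t) \in \mathcal P^1_0$ at every $t$. In particular, for a geodesic $c$ in $\mathcal P\mathcal I^1_0$, the velocity $c_t(t)$ is a piecewise linear function with respect to the fixed grid $\theta^1 < \dots < \theta^{n+1}$ at each time.

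Next I would invoke \autoref{lem:s:mom}. Since $c(t)$ is piecewise linear, the reparametrization $\ph \in W^{1,\infty}(S^1,S^1)$ for which $c(t)\circ\ph$ has constant speed is itself piecewise linear (it is affine on each grid interval, with slope $\ell_c/(n\ell^i)$ on $[\theta^i,\theta^{i+1}]$, as noted in the paragraph following \autoref{lem:s:mom}). Therefore $c_t(t)\circ\ph$ is again piecewise linear with respect to the grid, so the hypothesis of \autoref{lem:s:mom} is met and $\check G_{c(t)}(c_t(t))$ is a sum of delta distributions for every $t$. By \autoref{def:s:soliton} this is exactly the statement that $c$ is a soliton solution, which completes the proof. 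I would also record the shape of the momentum explicitly using the coordinate formula for $\check G_c$ in \autoref{thm:p:metric}: writing $h = c_t$ with vertices $h^i$, one gets
\begin{equation*}
\check G_c(c_t) = \sum_{i=1}^n \langle \al^i(t), \de_{\th^i}\rangle, \qquad
\al^i(t) = \frac{1}{\ell_c}\left(\frac{h^i - h^{i-1}}{\ell^{i-1}} - \frac{h^{i+1}-h^i}{\ell^i}\right),
\end{equation*}
so the delta distributions sit at the fixed grid points $\th^i$ (which are carried along the flow trivially, as they do not move in the parameter $\th$, while the underlying spatial vertices $c(\th^i)$ do move). This makes the soliton interpretation concrete and matches the informal description in \autoref{def:s:soliton}.

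There is essentially no obstacle here: the corollary is a direct logical consequence of \autoref{thm:s:totally} and \autoref{lem:s:mom}, with the only mild subtlety being the observation that a constant-speed reparametrization of a piecewise linear curve is piecewise linear on the same grid, so that \autoref{lem:s:mom} applies cleanly. The one point worth stating carefully is that "geodesic in $\mathcal P\mathcal I^1_0$" means a solution of the geodesic equation of the induced (strong, finite-dimensional) Riemannian metric on $\mathcal P\mathcal I^1_0$, and that by \autoref{thm:s:totally} this coincides with a geodesic of $G$ on $\ImmL_0$ that starts tangent to $\mathcal P\mathcal I^1_0$; this identification is what licenses the use of \autoref{lem:s:mom}, which is phrased for curves and velocities in the ambient spaces $\ImmL_0$ and $\Lip_0$.
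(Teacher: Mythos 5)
Your proof is correct and follows exactly the paper's route: the paper's own proof is the one-line ``this follows from \autoref{thm:s:totally} and \autoref{lem:s:mom}'', and the supporting observations you spell out (that the constant-speed reparametrization of a piecewise linear curve is itself piecewise linear, and the explicit coordinate form of the momentum from \autoref{thm:p:metric}) are precisely what the paper records in the paragraph preceding the corollary and in \autoref{sec:p}.
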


\begin{proof}
This follows from \autoref{thm:s:totally} and \autoref{lem:s:mom}.
\end{proof}

\begin{theorem}
\label{thm:g:geo}
In the coordinates of \autoref{def:p:coordinates} the geodesic equation on $\mathcal P\mathcal I^1_0$ is given by the following $n\times d$-dimensional system of ODEs, 
\begin{align*}
\begin{pmatrix}
c^1_{tt}\\\vdots\\c^n_{tt}
\end{pmatrix}
&=
\frac{1}{\ell_c}\sum_{j=1}^{n} \frac{\langle c_t^{j+1}-c_t^{j}, c^{j+1}-c^{j}\rangle}{\ell^j}
\begin{pmatrix}
c^1_{t}\\\vdots\\c^n_{t}
\end{pmatrix}
\\&\qquad\qquad 
-\frac{1}{2\ell_c}\sum_{j=1}^{n} \frac{\langle c_t^{j+1}-c_t^{j}, c_t^{j+1}-c_t^{j}\rangle}{\ell^j} \begin{pmatrix}
c^1\\\vdots\\c^n
\end{pmatrix} 
\\&\qquad\qquad+
D_s^{-1}\circ \pi_0
\begin{pmatrix}
\frac{1}{(\ell^1)^3}\langle c^{2}-c^1,c_t^{2}-c_t^{1}\rangle (c_t^{2}-c_t^{1})\\\vdots\\
\frac{1}{(\ell^n)^3}\langle c^{n+1}-c^n,c_t^{n+1}-c_t^{n}\rangle (c_t^{n+1}-c_t^{n})
\end{pmatrix}
\\&\qquad\qquad-\frac12
D_s^{-1}\circ \pi_0
\begin{pmatrix}
\frac{1}{(\ell^1)^3}\langle c_t^{2}-c_t^1,c_t^{2}-c_t^{1}\rangle (c^{2}-c^{1})\\\vdots\\
\frac{1}{(\ell^n)^3}\langle c_t^{n+1}-c_t^n,c_t^{n+1}-c_t^{n}\rangle (c^{n+1}-c^{n})
\end{pmatrix}
\,,
\end{align*}
where the operators $D_s^{-1}$ and $\pi_0$ are given by \autoref{lem:p:Ds2}.
\end{theorem}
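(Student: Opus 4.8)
The plan is to take the abstract geodesic equation \eqref{equ:m:geo} established in \autoref{thm:m:geo}, restrict it to the submanifold $\mathcal P\mathcal I^1_0$ (which is legitimate by \autoref{thm:s:totally}, since the submanifold is totally geodesic, so the Christoffel symbol $\Ga_c(h,h)$ stays tangent to $\mathcal P^1_0$), and then substitute the explicit coordinate formulas from \autoref{lem:p:Ds2} and \autoref{thm:p:metric} for every operator appearing in it. The four summands of \eqref{equ:m:geo} are: $G_c(c,c_t)\,c_t$, $-\tfrac12 G_c(c_t,c_t)\,c$, and the two terms $D_s^{-1}\pi_0(\langle D_s c, D_s c_t\rangle D_s c_t)$ and $-\tfrac12 D_s^{-1}\pi_0(|D_s c_t|^2 D_s c)$. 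Each of these is translated into vertex coordinates $(c^1,\dots,c^n)$ by a mechanical computation.

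Concretely, first I would record that $(D_s c)^i = (c^{i+1}-c^i)/\ell^i$ and $(D_s c_t)^i = (c_t^{i+1}-c_t^i)/\ell^i$ from \autoref{lem:p:Ds2}, and that by \autoref{thm:p:metric} the scalar $G_c(c,c_t) = \tfrac1{\ell_c}\sum_{j=1}^n \tfrac1{\ell^j}\langle c^{j+1}-c^j, c_t^{j+1}-c_t^j\rangle$ and similarly $G_c(c_t,c_t) = \tfrac1{\ell_c}\sum_{j=1}^n \tfrac1{\ell^j}|c_t^{j+1}-c_t^j|^2$; these give the first two lines of the claimed system verbatim once multiplied against the vectors $(c^i)_i$ and $(c_t^i)_i$. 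For the last two lines, note that the pointwise product of two elements of $\mathcal P^0$ is again in $\mathcal P^0$ (as used in the proof of \autoref{thm:s:totally}), so $\langle D_s c, D_s c_t\rangle D_s c_t$ is the piecewise constant function with $i$-th edge value $\tfrac1{(\ell^i)^3}\langle c^{i+1}-c^i, c_t^{i+1}-c_t^i\rangle(c_t^{i+1}-c_t^i)$, and likewise for the fourth term; then one simply applies the coordinate formulas for $\pi_0$ and $D_s^{-1}$ from \autoref{lem:p:Ds2} to these edge vectors, leaving the composite $D_s^{-1}\circ\pi_0$ displayed symbolically exactly as in the statement.

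There is essentially no obstacle of a conceptual kind here: the theorem is a bookkeeping consequence of the totally geodesic property plus the explicit operator calculus already assembled in \autoref{sec:p}. The only point requiring a line of justification is that the restriction is valid, i.e.\ that substituting $c\in\mathcal P\mathcal I^1_0$ and $c_t\in T_c\mathcal P\mathcal I^1_0=\mathcal P^1_0$ into $\Ga_c$ yields an element of $\mathcal P^1_0$ and hence a genuine second-order ODE on the finite-dimensional manifold; this is precisely the content of \autoref{thm:s:totally}, so I would invoke it and then just carry out the substitution. The mild bookkeeping nuisance — keeping the index $i+1$ interpreted cyclically modulo $n$ and tracking the constant shift $\xi = -\tfrac1n\sum_m\sum_{j<m}k^j\ell^j$ inside $D_s^{-1}$ — is already handled by the formulas of \autoref{lem:p:Ds2}, so the proof can legitimately leave $D_s^{-1}\circ\pi_0$ in closed form rather than expanding it. Hence the write-up is short: assert the restriction via \autoref{thm:s:totally}, cite \autoref{lem:p:Ds2} and \autoref{thm:p:metric} for the coordinate forms of each of the four terms, and observe that assembling them gives the displayed system.

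\begin{proof}
By \autoref{thm:s:totally} the submanifold $\mathcal P\mathcal I^1_0$ is totally geodesic, so the geodesic equation \eqref{equ:m:geo} of \autoref{thm:m:geo} restricts to a second-order ODE on $\mathcal P\mathcal I^1_0$; in particular, for $c\in\mathcal P\mathcal I^1_0$ and $c_t\in\mathcal P^1_0$ the right-hand side of \eqref{equ:m:geo} lies in $\mathcal P^1_0$. It remains to rewrite each of its four summands in the coordinates of \autoref{def:p:coordinates}.

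By \autoref{lem:p:Ds2} we have $(D_sc)^i=(c^{i+1}-c^i)/\ell^i$ and $(D_sc_t)^i=(c_t^{i+1}-c_t^i)/\ell^i$, with all vertex indices read cyclically modulo $n$. Hence, by \autoref{thm:p:metric},
\begin{equation*}
G_c(c,c_t)=\frac1{\ell_c}\sum_{j=1}^n\frac{\langle c_t^{j+1}-c_t^j,c^{j+1}-c^j\rangle}{\ell^j}\,,\qquad
G_c(c_t,c_t)=\frac1{\ell_c}\sum_{j=1}^n\frac{\langle c_t^{j+1}-c_t^j,c_t^{j+1}-c_t^j\rangle}{\ell^j}\,,
\end{equation*}
which, multiplied against the vectors $(c^i)_{i=1}^n$ and $(c_t^i)_{i=1}^n$, produce the first two lines of the claimed system. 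For the remaining two terms, $\mathcal P^0$ is an algebra under pointwise multiplication (as used in the proof of \autoref{thm:s:totally}), so $\langle D_sc,D_sc_t\rangle D_sc_t\in\mathcal P^0$ has $i$-th edge value $\tfrac1{(\ell^i)^3}\langle c^{i+1}-c^i,c_t^{i+1}-c_t^i\rangle(c_t^{i+1}-c_t^i)$, and $|D_sc_t|^2 D_sc\in\mathcal P^0$ has $i$-th edge value $\tfrac1{(\ell^i)^3}\langle c_t^{i+1}-c_t^i,c_t^{i+1}-c_t^i\rangle(c^{i+1}-c^i)$. Applying the operators $\pi_0$ and $D_s^{-1}$ of \autoref{lem:p:Ds2} to these edge vectors gives the third and fourth lines of the system, with $D_s^{-1}\circ\pi_0$ displayed symbolically. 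Summing the four contributions yields the asserted $n\times d$-dimensional ODE.
\end{proof}
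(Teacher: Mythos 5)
Your proof is correct and follows essentially the same route as the paper's own argument: invoke \autoref{thm:s:totally} to restrict the geodesic equation \eqref{equ:m:geo} to $\mathcal P\mathcal I^1_0$, then substitute the coordinate formulas of \autoref{lem:p:Ds2} and \autoref{thm:p:metric} term by term, leaving $D_s^{-1}\circ\pi_0$ in symbolic form. The only difference is cosmetic: you explicitly justify that the pointwise products land in $\mathcal P^0$ via the algebra property, which the paper leaves implicit at this point.
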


\begin{proof}
As $\mathcal P\mathcal I^1_0$ is totally geodesic by \autoref{thm:s:totally}, the geodesic equation on $\mathcal P\mathcal I^1_0$ is simply the restriction of the geodesic equation on $\ImmL_0$. By \autoref{thm:p:metric} one has for $c \in \mathcal P\mathcal I^1_0$ and $c_t\in \mathcal P^1_0$ that
\begin{align*}
G_c(c_t,c_t)&=\frac{1}{\ell_c} \sum_{j=1}^{n} \frac{1}{\ell^j}\langle c_t^{j+1}-c_t^{j}, c_t^{j+1}-c_t^{j}\rangle\,, \\
G_c(c,c_t)&=\frac{1}{\ell_c} \sum_{j=1}^{n} \frac{1}{\ell^j}\langle c_t^{j+1}-c_t^{j}, c^{j+1}-c^{j}\rangle\,.
\end{align*}
These are the first and second term of the geodesic equation \eqref{equ:m:geo}. The remaining term $\langle v,D_s c_{t}\rangle D_s c_{t}-\frac12\langle D_s c_{t},D_s c_{t}\rangle v$, to which $D_s^{-1}\circ\pi_0$ is applied, has the coordinate expression
\begin{equation*}
\frac{1}{(\ell^i)^3}\langle c^{i+1}-c^i,c_t^{i+1}-c_t^{i}\rangle (c_t^{i+1}-c_t^{i})
-\frac{1}{2(\ell^i)^3}\langle c_t^{i+1}-c_t^i,c_t^{i+1}-c_t^{i}\rangle (c^{i+1}-c^{i})\,.
\qedhere
\end{equation*}
\end{proof}

\begin{example}\label{ex:selfintersection}
Geodesics in $P\mathcal I^1_0$ may form self-intersections and may have non-constant winding number. An example, which is inspired by \cite[Figure~3]{younes2008metric}, is the following curve $c$ in $P\mathcal I^1_0$, which is depicted in \autoref{fig:selfintersection}:
\begin{equation*}
c^1=-c^3=\begin{pmatrix}\sin(t)\\0\end{pmatrix}, 
\qquad
c^2=-c^4=\begin{pmatrix}0\\ \cos(t)\end{pmatrix}.
\end{equation*}
This curve is a solution of the geodesic equation, as can be verified using \autoref{thm:g:geo}. It self-intersects at all multiples of $\pi/2$, and its winding number at all times $t$ without self-intersection equals $\operatorname{sgn}(\sin(2t))$.
\end{example}

\begin{figure}
\centering
{\footnotesize
\begin{psfrags}
\include{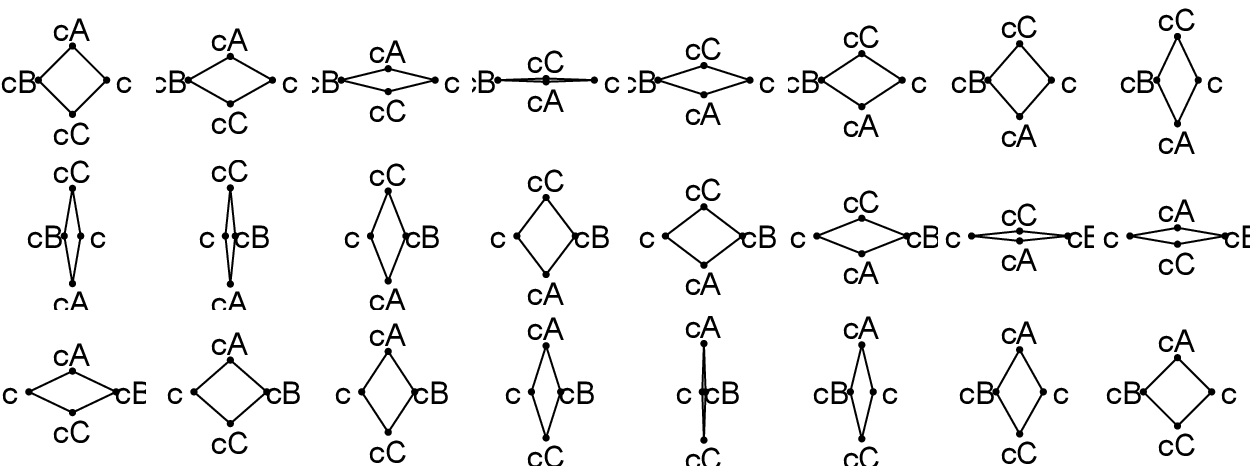}
\includegraphics[width=\textwidth]{selfintersection}
\end{psfrags}}
\caption{A closed geodesic in $P\mathcal I^1_0$ which has self-intersection and changes its winding number (c.f.\@ \autoref{ex:selfintersection}). See \href{https://arxiv.org/src/1702.04344v3/anc/selfintersection4.mp4}{here} or \href{https://www.mat.univie.ac.at/~michor/solitons-selfintersection4.mp4}{here} for an animation.}
\label{fig:selfintersection}
\end{figure}

\begin{remark}
The results of this section provide a powerful framework for solving the initial value problem for geodesics. One starts with an initial condition $(c,h) \in T\ImmL_0$. Assuming some additional smoothness, e.g.\@ $c,h \in W^{2,\infty}_0$, one can find for each $n \in \mathbb N$ a piecewise linear approximation $(c^{(n)},h^{(n)}) \in T\mathcal P^1_0$ built on a grid of $n$ points such that 
\begin{equation*}
\| c-c^{(n)} \|_{W^{1,\infty}_0} + \| h-h^{(n)} \|_{W^{1,\infty}_0} = O(1/n)\,.
\end{equation*}  
The geodesic equation with initial value $(c^{(n)},h^{(n)})$ is a second order ODE of dimension $(n-1)\times d$ and can be solved by standard methods with accuracy $1/n$ or better. As the exponential mapping is smooth, it follows that the $W^{1,\infty}$-distance between the true and discretized geodesics is  of order $1/n$. 

In dimension $d=2$ an alternative method is to solve the geodesic equation using the basic mapping of \cite{younes2008metric}; see \autoref{sec:basic} for how this would work in our setting. 
\end{remark}

\begin{figure}
\centering
{\footnotesize
\begin{psfrags}
\include{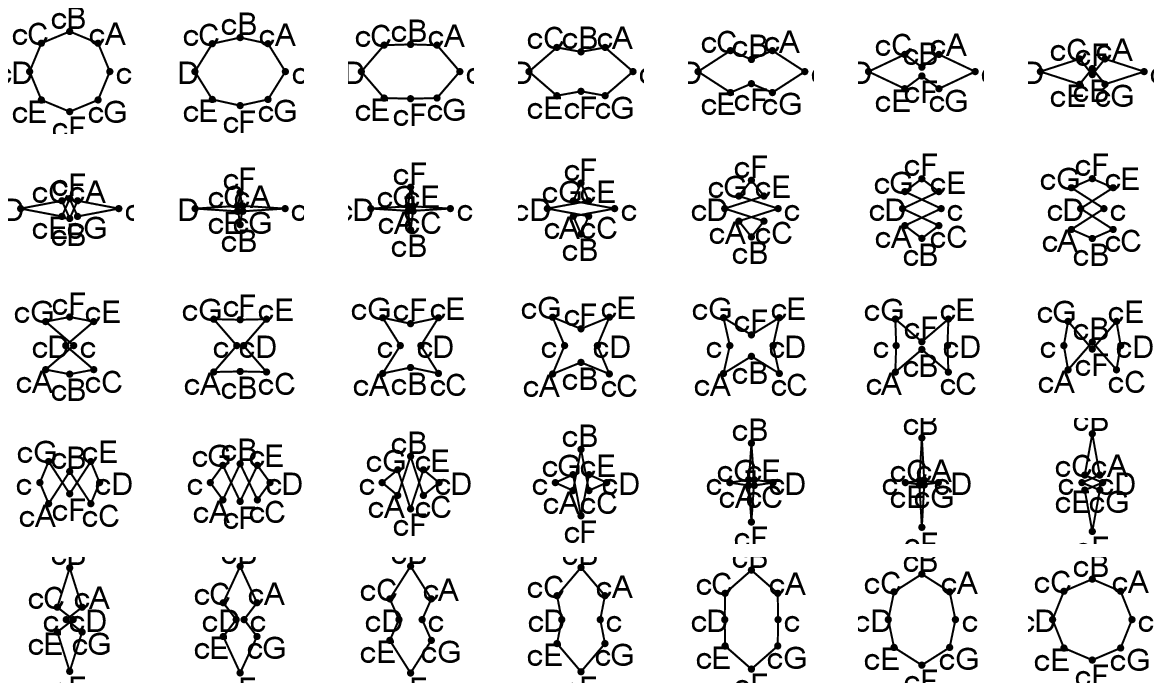}
\includegraphics[width=\textwidth]{selfintersection4}
\end{psfrags}}
\caption{A closed geodesic in $P\mathcal I^1_0$ which has self-intersection and changes its winding number (c.f.\@ \autoref{ex:selfintersection}). See \href{https://arxiv.org/src/1702.04344v3/anc/selfintersection8.mp4}{here} or \href{https://www.mat.univie.ac.at/~michor/solitons-selfintersection8.mp4}{here} for an animation.}
\label{fig:selfintersection4}
\end{figure}

\section{A Hamiltonian perspective}\label{sec:h}

The degeneracy of the bilinear form $G$ on $\mathcal P^1$ does not allow one to formulate the geodesic equation directly on this space, which is why we had to factor out translations in the first place. Interestingly, this problem does not occur in the Hamiltonian formulation. We will see below that Hamilton's equations make sense on all of $\mathcal P^1$, and that the solutions of Hamilton's equations project down to geodesics when translations are factored out. 

\begin{definition}\label{def:h:hamiltonian}
For the purpose of this section we view $G_c$, $c \in \mathcal P\mathcal I^1$, as a degenerate bilinear form $G_c:\mathcal P^1\times \mathcal P^1 \to \R$ and denote the corresponding linear operator by $\check G_c:\mathcal P^1\to(\mathcal P^1)^*$. Note that the relation $G_c(h,k)=G_c(\pi_1h,\pi_1k)$ can be expressed equivalently as
\begin{equation}\label{equ:h:Gcheck}
\check G_c = \pi_1^* \circ \check G_c|_{\mathcal P^1_0}\circ \pi_1: \mathcal P^1 \to \mathcal (P^1)^*\,.
\end{equation}
In analogy to this we define
\begin{equation}\label{equ:h:Kcheck}
\check K_c = \iota_1 \circ (\check G_c|_{\mathcal P^1_0})^{-1}\circ \iota_1^*: (\mathcal P^1)^* \to \mathcal P^1\,,
\end{equation}
where $\iota_1$ is given in \autoref{lem:p:Ds}, and we let $K_c:(\mathcal P^1)^*\times (\mathcal P^1)^*\to \R$ be the corresponding symmetric bilinear form. We call $K$ the extended cometric, and we define the Hamiltonian 
\begin{equation*}
H(c,\alpha) = \frac 12 K_c(\alpha,\alpha)\,.
\end{equation*}
\end{definition}

The meaning of $\check K$ is clarified by the following lemma.

\begin{lemma}\label{lem:h:pseudo}
$\check K_c$ is the Moore--Penrose pseudo-inverse of $\check G_c$ with respect to the $L^2(\ud \theta)$ scalar product on $\mathcal P^1$ and the dual scalar product on $(\mathcal P^1)^*$, i.e., 
\begin{align*}
\check G_c \check K_c\check G_c &= \check G_c\,, &
\check K_c \check G_c\check K_c &= \check K_c\, &
(\check K_c\check G_c)^\top &= \check K_c\check G_c\,, &
(\check G_c\check K_c)^\top &= \check G_c\check K_c\,.
\end{align*}
\end{lemma}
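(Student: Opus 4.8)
The plan is to read everything off from the factorizations \eqref{equ:h:Gcheck} and \eqref{equ:h:Kcheck}. Set $A := \check G_c|_{\mathcal P^1_0}\colon \mathcal P^1_0 \to (\mathcal P^1_0)^*$; this is a symmetric linear isomorphism, since $G$ restricts to a nondegenerate Riemannian metric on $\mathcal P\mathcal I^1_0$ by \autoref{lem:m:smooth}, so that
\begin{equation*}
\check G_c = \pi_1^*\circ A\circ\pi_1\,, \qquad \check K_c = \iota_1\circ A^{-1}\circ\iota_1^*\,.
\end{equation*}
The only structural fact I would use is that $\pi_1$ is the $L^2(\ud\theta)$-orthogonal projection of $\mathcal P^1$ onto $\mathcal P^1_0$ and $\iota_1$ the corresponding inclusion; this gives $\pi_1\circ\iota_1 = \Id_{\mathcal P^1_0}$ and, by dualizing, $\iota_1^*\circ\pi_1^* = \Id_{(\mathcal P^1_0)^*}$.

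With these identities the first two Penrose relations follow by telescoping: inserting the factorizations and cancelling $\pi_1\iota_1$, $\iota_1^*\pi_1^*$ and $AA^{-1}$,
\begin{align*}
\check G_c\check K_c\check G_c &= \pi_1^*\,A\,(\pi_1\iota_1)\,A^{-1}\,(\iota_1^*\pi_1^*)\,A\,\pi_1 = \pi_1^*A\pi_1 = \check G_c\,,\\
\check K_c\check G_c\check K_c &= \iota_1\,A^{-1}\,(\iota_1^*\pi_1^*)\,A\,(\pi_1\iota_1)\,A^{-1}\,\iota_1^* = \iota_1 A^{-1}\iota_1^* = \check K_c\,.
\end{align*}

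For the two symmetry relations I would use the same computation to identify $\check K_c\check G_c = \iota_1\pi_1$ and $\check G_c\check K_c = \pi_1^*\iota_1^* = (\iota_1\pi_1)^*$. Now $\iota_1\pi_1\colon\mathcal P^1\to\mathcal P^1$ is precisely the $L^2(\ud\theta)$-orthogonal projection onto $\mathcal P^1_0$, hence self-adjoint with respect to the $L^2(\ud\theta)$ scalar product, which is the identity $(\check K_c\check G_c)^\top = \check K_c\check G_c$. Its dual map $(\iota_1\pi_1)^*$ is again idempotent and, by the elementary fact that the Riesz isomorphism of $L^2(\ud\theta)$ conjugates an operator into its dual and the scalar product into the dual one, self-adjoint for the dual scalar product on $(\mathcal P^1)^*$; equivalently, the dual of an $L^2(\ud\theta)$-orthogonal projection is an orthogonal projection for the dual scalar product. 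This gives $(\check G_c\check K_c)^\top = \check G_c\check K_c$.

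I do not anticipate any real obstacle: the content is the linear algebra of the orthogonal splitting $\mathcal P^1 = \mathcal P^1_0 \oplus \ker\check G_c$. The only points requiring a little care are the bookkeeping of which space and which scalar product each transpose refers to — $L^2(\ud\theta)$ on $\mathcal P^1$ versus its dual on $(\mathcal P^1)^*$ — and the short duality argument in the last step, which can alternatively be verified by a direct computation in the Euclidean coordinates of \autoref{def:p:coordinates} if one prefers to avoid the abstract formulation.
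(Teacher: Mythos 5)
Your proposal is correct and follows essentially the same route as the paper: factor $\check G_c=\pi_1^*A\pi_1$ and $\check K_c=\iota_1A^{-1}\iota_1^*$, cancel using $\pi_1\iota_1=\Id_{\mathcal P^1_0}$ and its dual to get $\check K_c\check G_c=\iota_1\pi_1$ and $\check G_c\check K_c=\pi_1^*\iota_1^*$, and conclude the symmetry relations from the fact that these are the $L^2(\ud\theta)$-orthogonal projection and its dual. Your write-up merely spells out the duality step for $(\check G_c\check K_c)^\top$ a bit more explicitly than the paper does.
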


\begin{proof}
Formulas \eqref{equ:h:Gcheck} and \eqref{equ:h:Kcheck} and the identity $\pi_1 \iota_1 = \Id_{\mathcal P^1_0}$ imply that
\begin{equation}\label{equ:h:pseudo}\begin{aligned}
\check K\check G 
&= 
\iota_1 \circ (\check G_c|_{\mathcal P^1_0})^{-1}\circ \iota_1^* \circ \pi_1^* \circ \check G_c|_{\mathcal P^1_0}\circ \pi_1 
= \iota_1\pi_1\,,
\\
\check G\check K
&= 
\pi_1^* \circ \check G_c|_{\mathcal P^1_0}\circ \pi_1 \circ \iota_1 \circ (\check G_c|_{\mathcal P^1_0})^{-1}\circ \iota_1^*
= \pi_1^* \iota_1^*\,.
\end{aligned}\end{equation}
Similarly, one obtains in a further similar step that $\check G \check K\check G = \check G$ and $\check K \check G\check K = \check K$. This establishes the first two equations of the lemma. The remaining ones are satisfied because the mappings $\check K\check G=\iota_1\pi_1$ and $\check G\check K=\pi_1^*\iota_1^*$ are symmetric with respect to the $L^2(d\theta)$ scalar products on $\mathcal P^1$ and $(\mathcal P^1)^*$, respectively.
\end{proof}

We then have:
\begin{theorem}
Let $T>0$, and let $(c,\alpha):[0,T) \to \mathcal P^1\times (\mathcal P^1)^*$ be a solution of Hamilton's equations
\begin{equation*}
c_t = \p_\alpha H(c,\al)\,, \qquad
\al_t = -\p_c H(c,\al)\,.
\end{equation*}
Then $c$ is a critical point of the energy functional. If additionally $c(0) \in \mathcal P^1_0$, then $c(t) \in \mathcal P^1_0$ for all $t \in [0,T)$, and $c$ is a geodesic on the Riemannian space $(\mathcal P^1_0,G)$. 	Conversely, if $c:[0,T)\to \mathcal P^1_0$ is a geodesic and $\alpha = \check G_c c_t$, then $(c,\alpha)$ is a solution of Hamilton's equations. 
\end{theorem}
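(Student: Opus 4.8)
The plan is to exploit the algebraic relation between $\check G_c$, $\check K_c$, and the projections $\iota_1,\pi_1$ established in \autoref{lem:h:pseudo}, together with the variational characterization of geodesics from \autoref{thm:m:geo}. The key observation is that the Hamiltonian $H(c,\alpha)=\tfrac12 K_c(\alpha,\alpha)$ restricted to the image of $\check G_c$ agrees with the Riemannian energy density, since $\check K_c$ inverts $\check G_c$ on $\mathcal P^1_0$; away from that image, $H$ only sees the $\iota_1^*$-component of $\alpha$, i.e.\ the part paired with $\mathcal P^1_0$.

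First I would compute $\partial_\alpha H(c,\alpha) = \check K_c \alpha$ and observe, using $\check K_c = \iota_1\circ(\check G_c|_{\mathcal P^1_0})^{-1}\circ\iota_1^*$, that $c_t = \check K_c\alpha$ always lies in $\mathcal P^1_0$; in particular $\pi_1 c_t = c_t$ and $\check G_c c_t = \check G_c\check K_c\alpha = \pi_1^*\iota_1^*\alpha$. Next I would show that along a solution the energy $\tfrac12\int_0^1 G_c(c_t,c_t)\,\mathrm dt = \tfrac12\int_0^1 K_c(\alpha,\alpha)\,\mathrm dt$ is stationary under variations: this is the standard fact that solutions of Hamilton's equations for $H(c,\alpha)=\tfrac12 K_c(\alpha,\alpha)$ with $K$ a (possibly degenerate) cometric are critical points of the corresponding action, which follows by writing the action in Hamiltonian form $\int (\langle\alpha,c_t\rangle - H(c,\alpha))\,\mathrm dt$ and varying $c$ and $\alpha$ independently. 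Since $c_t\in\mathcal P^1_0$ always, if $c(0)\in\mathcal P^1_0$ then $c(t)=c(0)+\int_0^t c_t\,\mathrm ds\in\mathcal P^1_0$ for all $t$; and on $\mathcal P^1_0$ the form $G$ is nondegenerate (by \autoref{lem:m:smooth}, as $\mathcal P^1_0\subset\ImmL_0$), so the critical-point condition for the energy is exactly the geodesic equation of \autoref{thm:m:geo} restricted to the totally geodesic submanifold $\mathcal P\mathcal I^1_0$ (\autoref{thm:s:totally}). Hence $c$ is a geodesic.

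For the converse, I would start from a geodesic $c:[0,T)\to\mathcal P^1_0$ and set $\alpha=\check G_c c_t$. Since $c_t\in\mathcal P^1_0$, we have $\check K_c\alpha = \check K_c\check G_c c_t = \iota_1\pi_1 c_t = c_t$, so the first Hamilton equation $c_t = \partial_\alpha H(c,\alpha)=\check K_c\alpha$ holds. For the second equation, I would differentiate $\alpha=\check G_c c_t$ in $t$ and substitute the geodesic equation $c_{tt}=\Gamma_c(c_t,c_t)$; the resulting expression must be matched with $-\partial_c H(c,\alpha) = -\tfrac12\partial_c\big(K_c(\alpha,\alpha)\big)$. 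This is a computation relating the derivative of the cometric to the Christoffel symbols — the usual Legendre-transform identity — carried out in the finite-dimensional coordinates of \autoref{def:p:coordinates}, which makes all terms explicit via \autoref{lem:p:Ds2} and \autoref{thm:p:cometric}.

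The main obstacle I anticipate is the bookkeeping in the converse direction: verifying $\alpha_t = -\partial_c H(c,\alpha)$ requires carefully differentiating $\check G_c$ with respect to the footpoint $c$ (the metric depends on $c$ through the $\ell^i$ and $\ell_c$) and checking that this exactly reproduces the $c$-derivative of $\tfrac12 K_c(\alpha,\alpha)$. One should either do this directly in coordinates, or — more conceptually — invoke the general principle that Hamilton's equations for the Hamiltonian associated to a weak metric are equivalent to the geodesic equations wherever the metric is nondegenerate, and then note that the degenerate directions (constants) are handled automatically because $\partial_\alpha H$ never produces a nonzero constant component and $\partial_c H$ is insensitive to translating $c$ (by translation-invariance of $G$). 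The translation-invariance remark is what lets one drop the hypothesis $c(0)\in\mathcal P^1_0$ in the first sentence of the conclusion and still get a critical point of the energy on all of $\mathcal P^1$.
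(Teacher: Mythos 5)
Your proposal is correct in outline and rests on the same two structural facts as the paper's proof --- that $\p_\al H(c,\al)=\check K_c\al$ has range $\mathcal P^1_0$ (so $c$ stays in $\mathcal P^1_0$), and that $\check G_c\check K_c=\pi_1^*\iota_1^*$ and $\check K_c\check G_c=\iota_1\pi_1$ from \autoref{lem:h:pseudo} --- but it reaches the criticality statement by a genuinely different route. You invoke the Hamilton--Pontryagin principle: solutions of Hamilton's equations are critical points of the phase-space action $\int(\al(c_t)-H)\ud t$, which along solutions equals the Riemannian energy because $G_c(\check K_c\al,\check K_c\al)=\al(\iota_1\pi_1\check K_c\al)=K_c(\al,\al)$. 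This works, provided you add the envelope/Legendre step that converts criticality under independent variations of $(c,\al)$ into criticality of $E$ under variations of $c$ alone; the degeneracy is harmless here for exactly the reason you give at the end (any variation $h$ may be replaced by $\pi_1 h$ without changing $E$, by translation-invariance). The paper instead computes directly: from the first Hamilton equation it gets $\al(h)=G_c(c_t,h)$ for $h\in\mathcal P^1_0$, and it converts the second Hamilton equation into $\big(G_c(c_t,h)\big)_t=\tfrac12 (D_{(c,h)}G)(c_t,c_t)$, which is precisely the condition that the first variation of $E$ vanishes. The device that makes this conversion --- and, crucially, the converse --- a two-line computation is the identity $D_{(c,h)}\check K=-\check K_c\,(D_{(c,h)}\check G)\,\check K_c$, obtained by differentiating the pseudo-inverse relations of \autoref{lem:h:pseudo}. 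This is the one piece your plan lacks: you correctly flag the converse as the main obstacle and propose to grind it out in the coordinates of \autoref{lem:p:Ds2} and \autoref{thm:p:cometric}, which would in principle succeed (everything is finite-dimensional), but the identity above lets you avoid coordinates entirely: a geodesic satisfies $\big(G_c(c_t,h)\big)_t=\tfrac12(D_{(c,h)}G)(c_t,c_t)$ by the first-variation formula of \autoref{thm:m:geo}, and with $\al=\check G_c c_t$ this translates back verbatim into $\al_t=-\tfrac12(D_{(c,\cdot)}K)(\al,\al)$, i.e.\ the second Hamilton equation.
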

Note that the initial momentum $\al(0)$ can be arbitrary.

\begin{proof}
Letting $D_{(c,h)}$ denote the directional derivative at $c \in \mathcal P^1$ in the direction $h \in \mathcal P^1$, Hamilton's equations can be rewritten as
\begin{align*}
c_t &= \check K_c \al &
\al_t &= -\frac 12 (D_{(c,\cdot)} K)(\al, \al)\,.
\end{align*}
As the range of $\check K_c$ is $\mathcal P^1_0$, we see that $c(0) \in \mathcal P^1_0$ implies $c(t) \in \mathcal P^1_0$ for all $t$. Hamilton's first equation and \eqref{equ:h:pseudo} imply that for each $h \in \mathcal P^1_0$,
\begin{equation*}
G_c(c_t,h) = G_c(\check K_c \alpha,h) = \alpha(\check K_c\check G_c h) = \alpha(\iota_1\pi_1h) = \alpha(h)\,.
\end{equation*}
Together with the identity 
\begin{equation*}
D_{(c,h)}\check K=-\check K_c (D_{(c,h)}\check G)\check K_c,
\end{equation*}
which one obtains by applying $D_{(c,h)}$ to the identities
\begin{equation*}
\check K_c = \check K_c \pi_1^*\iota_1^* = \iota_1\pi_1\check K_c, 
\qquad
\check K_c = \check K_c \check G_c \check K_c,
\end{equation*}
and Hamilton's second equation this implies that
\begin{align*}
\big(G_c(c_t,h)\big)_t
&=
\alpha_t(h)
= 
-\frac 12 (D_{(c,h)} K)(\al, \al)
=
\frac12 (D_{(c,h)} G)(\check K_c \alpha, \check K_c \alpha)
\\&=
\frac12 (D_{(c,h)} G)(c_t, c_t)\,.
\end{align*}
Thus, for any smooth path $h:[0,1]\to \mathcal P^1_0$ satisfying  $h(0)=h(1)=0$, the derivative of the Riemannian energy (c.f.\@ \autoref{thm:m:geo}) vanishes,
\begin{align*}
\ud E(c).h 
&=
\frac12 \int_0^1 D_{(c,h)}\left(G_c(c_t,c_t)\right)\ud t
\\&=
\int_0^1 \left(\frac12 (D_{(c,h)} G)(c_t,c_t)+G_c(c_t,h_t)\right)\ud t
\\&=
\int_0^1 \left(\frac12 (D_{(c,h)} G)(c_t,c_t)-\big(G_c(c_t,\cdot)\big)_t\ h\right)\ud t=0\,,
\end{align*}
and $c$ is a geodesic with respect to the metric $G$ on $\mathcal P^1_0$. The converse statement follows by reversing the argument.
\end{proof}

\begin{lemma}\label{lem:h:extended_cometric}
An explicit formula for $K$, using the identifications of Definition~\ref{def:p:coordinates}, is given by 
\begin{equation*}
K_c(\alpha,\beta) = \sum_{i,j=1}^n K_{i,j}(c) \langle\alpha^i,\alpha^j\rangle_{\mathbb R^d},
\end{equation*}
where
\begin{align*}
K_{i,j}(c) 
&= 
\la^1 \la^{\max(i,j)} - \la^i\la^j
+ \frac{\ka^1}{n}(\la^i + \la^j) - \frac{\la^1}{n}(\ka^i + \ka^j) 
\\&\qquad
- \frac{(\ka^1)^2}{n^2} + \frac{\la^1}{n^2} \sum_{k=1}^n k^2 \ell^k \,,
\end{align*}
with
\begin{align*}
\la^i &= \sum_{j=i}^n \ell^j &
\ka^i &= \sum_{j=i}^n j \ell^j\,.
\end{align*}
\end{lemma}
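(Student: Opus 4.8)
The plan is to reduce the statement to the cometric formula of \autoref{thm:p:cometric} together with a short coordinate computation. From the definition $\check K_c = \iota_1 \circ (\check G_c|_{\mathcal P^1_0})\inv \circ \iota_1^*$ in \autoref{def:h:hamiltonian}, the associated bilinear form satisfies, for all $\al,\be \in (\mathcal P^1)^*$,
\[
K_c(\al,\be) = \al\big(\check K_c\be\big) = (\iota_1^*\al)\big((\check G_c|_{\mathcal P^1_0})\inv\iota_1^*\be\big) = G_c\inv(\iota_1^*\al,\iota_1^*\be)\,,
\]
where $G_c\inv$ denotes the cometric on $(\mathcal P^1_0)^*$ computed in \autoref{thm:p:cometric}; symmetry of $K_c$ is inherited from the manifest symmetry of that formula. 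It therefore remains to express $\iota_1^*$ in the coordinates of \autoref{def:p:coordinates} and to carry out the substitution.

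First I would compute $\iota_1^*$. For $\al = \sum_i \al^i \de_{\th^i} \in (\mathcal P^1)^*$ and $h \in \mathcal P^1_0$, so that $\sum_i h^i = 0$ by \autoref{def:p:coordinates}, one has
\[
(\iota_1^*\al)(h) = \al(h) = \sum_{i=1}^n \langle \al^i, h^i\rangle = \sum_{i=1}^n \Big\langle \al^i - \tfrac1n\textstyle\sum_{m=1}^n\al^m,\; h^i\Big\rangle\,,
\]
and since the coefficient vector $\big(\al^i - \tfrac1n\sum_m\al^m\big)_i$ has vanishing sum, it represents $\iota_1^*\al$ in the coordinates of $(\mathcal P^1_0)^*$. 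Thus, under the identifications, $\iota_1^*$ acts by the same formula $\al^i\mapsto\al^i-\tfrac1n\sum_j\al^j$ as the projection $\pi_1$ in \autoref{lem:p:Ds2}.

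Next, writing $M_{ij} := \la^1\la^{\max(i,j)} - \la^i\la^j$ for the cometric matrix and $\tilde\al^i := \al^i - \tfrac1n\sum_m\al^m$, the expansion of $K_c(\al,\be) = \sum_{i,j=1}^n M_{ij}\,\langle\tilde\al^i,\tilde\be^j\rangle$ and collection of the coefficient of $\langle\al^i,\be^j\rangle$ give
\[
K_{i,j}(c) = M_{ij} - \frac1n\sum_{k=1}^n M_{ik} - \frac1n\sum_{k=1}^n M_{kj} + \frac1{n^2}\sum_{k,l=1}^n M_{kl}\,.
\]
The desired identity then follows from
\[
\sum_{k=1}^n M_{ik} = \la^1\ka^i - \la^i\ka^1\,,\qquad \sum_{k,l=1}^n M_{kl} = \la^1\sum_{k=1}^n k^2\ell^k - (\ka^1)^2\,,
\]
which in turn reduce, using $\sum_k\la^k=\ka^1$, to the elementary summation identities
\[
\sum_{k=1}^n\la^{\max(i,k)} = \ka^i\,,\qquad \sum_{k,l=1}^n\la^{\max(k,l)} = \sum_{m=1}^n(2m-1)\la^m = \sum_{k=1}^n k^2\ell^k\,.
\]
Both are obtained by writing $\la^m = \sum_{j\geq m}\ell^j$ and interchanging the order of summation: for the first, one splits at $k=i$ and uses $\la^i-\la^{i+1}=\ell^i$ together with $\ka^i = i\ell^i + \ka^{i+1}$; for the second, one counts that exactly $2m-1$ pairs $(k,l)$ satisfy $\max(k,l)=m$ and uses $\sum_{m=1}^j(2m-1)=j^2$.

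The step I expect to be the main obstacle is precisely this bookkeeping in the combinatorial sums — keeping the conventions $\la^i=\sum_{j=i}^n\ell^j$, $\ka^i=\sum_{j=i}^n j\ell^j$, $\la^{n+1}=\ka^{n+1}=0$ straight while interchanging orders of summation over the grid. Everything else is routine: the two reduction steps are formal manipulations using \autoref{lem:h:pseudo} and the identifications of \autoref{def:p:coordinates}, and assembling $K_{i,j}(c)$ from the three pieces above is a single substitution.
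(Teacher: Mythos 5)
Your proposal is correct and follows essentially the same route as the paper's proof: reduce to $K_c(\al,\be)=G_c^{-1}(\iota_1^*\al,\iota_1^*\be)$ via \autoref{def:h:hamiltonian}, use $(\iota_1^*\al)^i=\al^i-\tfrac1n\sum_j\al^j$, and evaluate the same row sums $\sum_k M_{ik}=\la^1\ka^i-\ka^1\la^i$ and total sum $\sum_{k,l}M_{kl}=\la^1\sum_k k^2\ell^k-(\ka^1)^2$ that the paper establishes in its Steps 1--4. The only cosmetic difference is that you count the $2m-1$ pairs with $\max(k,l)=m$ where the paper interchanges the order of summation; both yield $\sum_{k,l}\la^{\max(k,l)}=\sum_k k^2\ell^k$.
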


\begin{proof}
We have
\begin{align*}
K_c(\al, \be) &= G_c\inv\left(\iota^\ast_1 \al, \iota^\ast_1 \be\right) \\
&= \sum_{i,j=1}^n \left( \la^1 \la^{\max(i,j)} - \la^i \la^j \right)
\left\langle (\iota^\ast_1 \al)^i, (\iota^\ast_1 \be)^j \right\rangle\,,
\end{align*}
together with
\[
\left(\iota^\ast_1 \al\right)^i = \al^i - \frac 1n \sum_{j=1}^n \al^j\,.
\]
The calculation proceeds via the following four identities.

{\bfseries Step 1.} $\displaystyle \sum_{i=1}^n \la^i  = \sum_{i=1}^n i \ell^i = \ka^1$.
This follows from
\begin{align*}
\sum_{i=1}^n \la^i = \sum_{i=1}^n \sum_{j=i}^n \ell^j = \sum_{j=1}^n \sum_{i=1}^j \ell^j = \sum_{j=1}^n j \ell^j = \ka^1\,.
\end{align*}

{\bfseries Step 2.} $\displaystyle \sum_{i=j+1}^n \la^i = \sum_{i=j+1}^{n} (i-j)\ell^i$.
This follows from
\begin{align*}
\sum_{i={j+1}}^n \la^i = \sum_{i={j+1}}^n \sum_{k=i}^n \ell^k 
= \sum_{k=j+1}^n \sum_{i=j+1}^k \ell^k = \sum_{k=j+1}^n (k-j)\ell^k\,.
\end{align*}

{\bfseries Step 3.} $\displaystyle \sum_{i,j=1}^n \left( \la^1 \la^{\max(i,j)} - \la^i \la^j \right) = \la^1 \sum_{k=1}^n k^2 \ell^k - (\ka^1)^2$. We start with
\begin{align*}
\sum_{i,j=1}^n \la^{\max(i,j)} 
&= \sum_{i=1}^n \left( i \la^i + \sum_{j=i+1}^n \la^j \right)
= \sum_{i=1}^n \sum_{j=i}^n i \ell^j + \sum_{i=1}^n \sum_{j=i+1}^n (j-i)\ell^j \\
&= \sum_{i=1}^n i \ell^i + \sum_{j=1}^n \sum_{i=1}^{j-1} j \ell^j = \sum_{i=1}^n i \ell^i
+ \sum_{j=1}^n j(j-1)\ell^j = \sum_{i=1}^n i^2 \ell^i\,.
\end{align*}
Therefore
\begin{align*}
\sum_{i,j=1}^n \left( \la^1 \la^{\max(i,j)} - \la^i \la^j \right)
&= \la^1 \sum_{i,j=1}^n \la^{\max(i,j)} - \left(\sum_{i=1}^n \la^i \right)^2 \\
&= \la^1 \sum_{k=1}^n k^2 \la^k - (\ka^1)^2\,.
\end{align*}

{\bfseries Step 4.} $\displaystyle \sum_{i=1}^n \la^1 \la^{\max(i,j)} - \la^i \la^j
= \la^1 \ka^j - \ka^1 \la^j$. We start with
\begin{align*}
\sum_{i=1}^n \la^{\max(i,j)} 
&= \sum_{i=1}^j \la^j + \sum_{i=j+1}^n \la^i
= j \la^j + \sum_{i=j+1}^n (i-j)\ell^i \\
&= \sum_{i=j+1}^n i \ell^i + j \la^j - j \la^{j+1}
= \sum_{i=j}^n i \ell^i = \ka^j\,.
\end{align*}
Therefore
\[
\sum_{i=1}^n \la^1 \la^{\max(i,j)} - \la^i \la^j
= \la^1 \ka^j - \ka^1 \la^j\,.
\]

To complete the proof it remains to combine the formulas for $G_c\inv$ and $\iota^\ast_1$ using the formulas derived in steps 3 and 4.
\end{proof}

\section{Relation to landmark spaces}\label{sec:l}

In this section we put the space of piecewise linear curves into the context of landmark spaces, which are important in shape analysis \cite{Bookstein1977,Kendall1984,Joshi2000}, and describe relations to the Large Deformation Diffeomorphic Metric Mapping (LDDMM) framework \cite{beg2005computing}, which is a widely used approach for defining metrics on landmark spaces. 

\begin{definition}
An ordered landmark is a tuple of pairwise distinct points $q^1, \dots, q^n$ in $\R^d$. The set of all landmarks is denoted by $\Land$; it is an open subset of $\R^{nd}$. Ordered landmarks can be seen as piecewise linear curves by connecting consecutive points via straight lines. Note that for landmarks all pairs of vertices are distinct, whereas for piecewise linear immersions only pairs of subsequent vertices are distinct. Thus, landmark space is an open subset of the space of piecewise linear immersions, i.e., $\Land\subset \mathcal P\mathcal I^1$, and the $H^1$-metric on $\Land$ is a well-defined non-negative (degenerate) bilinear form. The landmark space modulo translations is then given by
\begin{align*}
\Land_0 &= \left\{(q^1,\dots,q^n) \in \Land: \sum_{i=1}^n q^i = 0 \right\} \subset \mathcal P\mathcal I^1_0\,.
\end{align*}
\end{definition}

\begin{figure}
\centering
\begin{minipage}[c]{0.32\textwidth}
{\footnotesize
\begin{psfrags}
\include{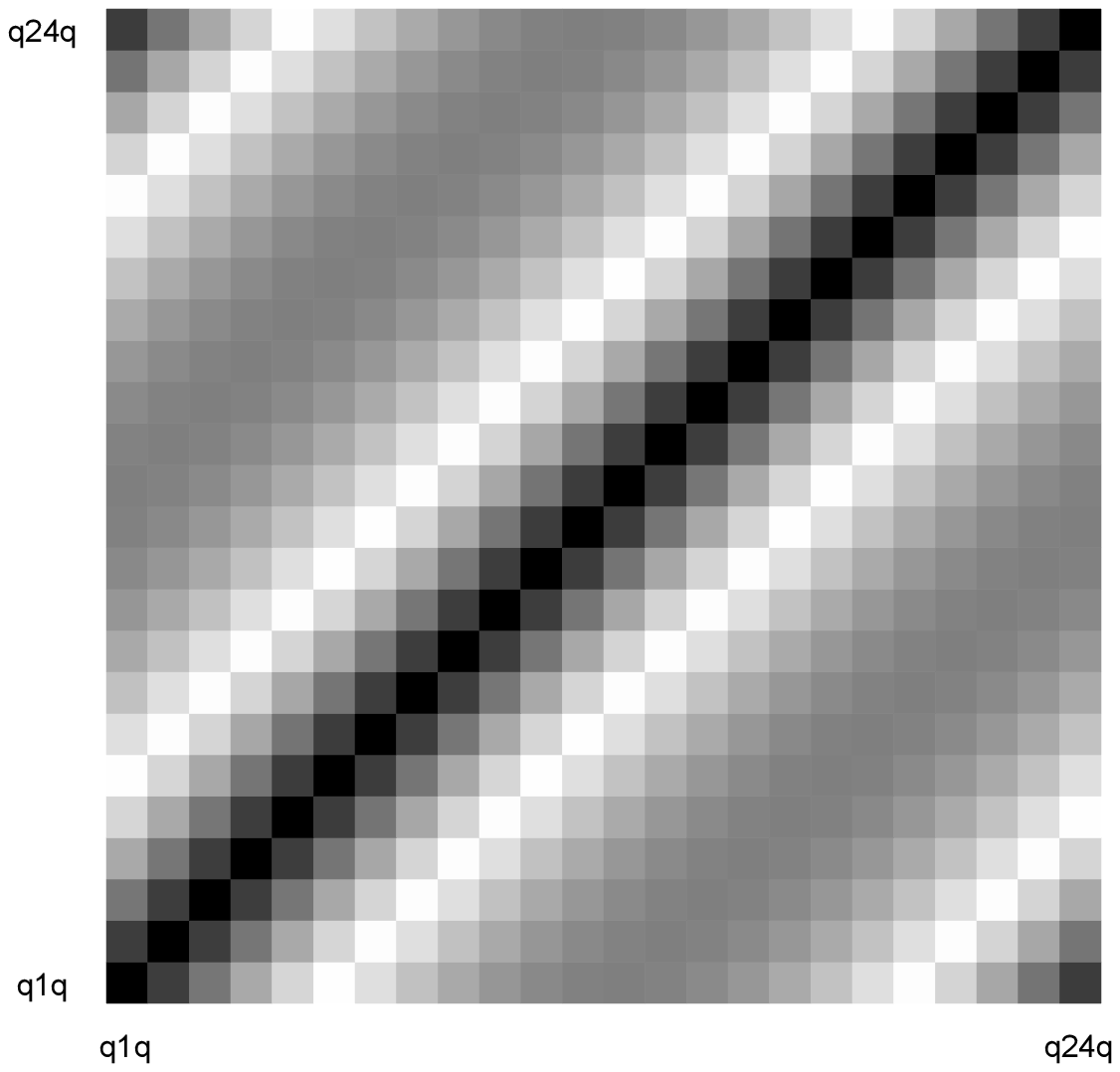}
\includegraphics[width=\textwidth]{kernel_h1_land}
\end{psfrags}}
\end{minipage}
\begin{minipage}[c]{0.32\textwidth}
{\footnotesize
\begin{psfrags}
\include{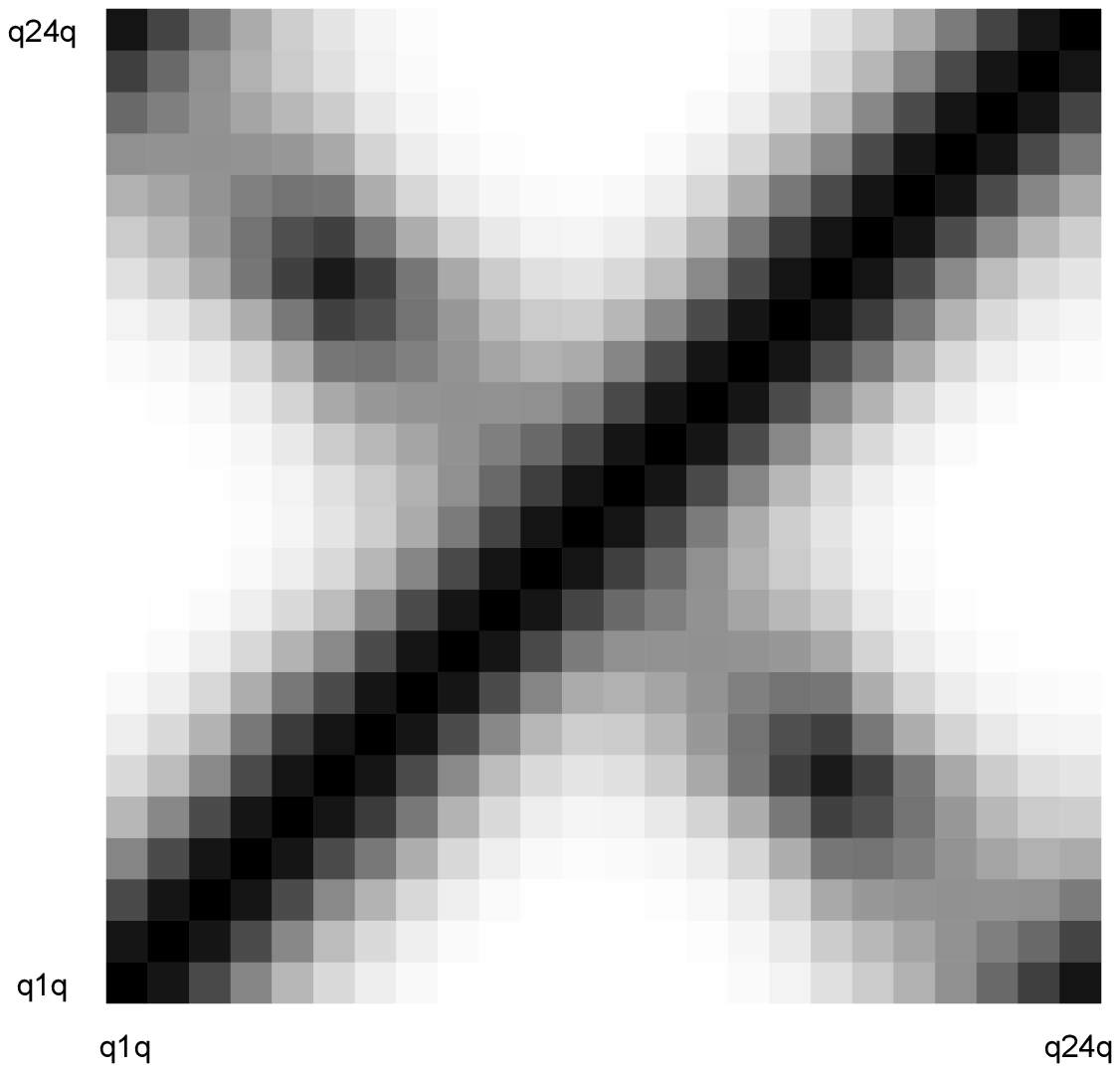}
\includegraphics[width=\textwidth]{kernel_gaussian_land}
\end{psfrags}}
\end{minipage}
\begin{minipage}[c]{0.32\textwidth}
{\footnotesize
\begin{psfrags}
\include{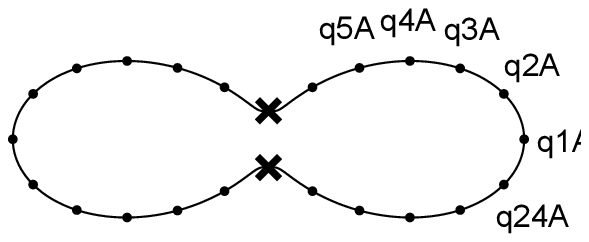}
\includegraphics[width=\textwidth]{kernel_curve}
\end{psfrags}}	
\end{minipage}
\caption[The kernel of the $H^1$ metric (left) compared to a Gaussian kernel (middle) at a specific landmark (right) on the space $\Land$. Note that the $H^1$ kernel does not ``see'' that the points marked by x have a small distance.]{The kernel of the $H^1$ metric (left) compared to a Gaussian kernel (middle) at a specific landmark (right) on the space $\Land$. Dark colors correspond to large values of the kernel. See \autoref{rem:l:interpretation} for an interpretation. 
}
\label{fig:kernels3}
\end{figure}

We now describe the construction of LDDMM metrics on landmark spaces. The approach is based on the paradigm of Grenander's pattern theory, where geometric objects are encoded via transformations acting on them. A metric on the transformation group then induces a metric on the space of geometric objects. In the LDDMM framework the transformation group is a group of diffeomorphisms equipped with a right invariant metric, which usually comes from a reproducing kernel Hilbert space. We refer to \cite{Younes2010} for further details.

\begin{definition}
Let $(H, \langle\cdot,\cdot\rangle_H)$ be a reproducing kernel Hilbert space of vector fields on $\R^d$ with kernel $\mathbf k^H:\R^d \x \R^d \to \R^{d\x d}$.
Provided that $H$ contains $C^\infty_c(\R^d,\R^d)$, the inner product $\langle\cdot,\cdot\rangle_H$ can be extended to a weak Riemannian metric on $\on{Diff}_c(\R^d)$ via right translation. This metric induces a unique metric $G^H$ and cometric $K^H$ on landmark space such that the action of the diffeomorphism group on a fixed template landmark is a Riemannian submersion. 
\end{definition}

The following lemma contrasts LDDMM and $H^1$-cometrics. 

\begin{lemma}
\label{lem:l:cometrics}
For each $q \in \Land$ the LDDMM cometric $K^H$ on $\Land$ is given by
\begin{equation*}
K^H_q=
\left( \begin{array}{ccc}
\mathbf k^H(q^1,q^1)  & \cdots & \mathbf k^H(q^1,q^n)\\
\vdots & \ddots  & \vdots\\
\mathbf k^H(q^n,q^1) & \cdots & \mathbf k^H(q^n,q^n)\end{array} \right) \in \R^{nd \x nd} \,,
\end{equation*}
and the extended $H^1$-cometric $K$ on $\Land$ (see \autoref{def:h:hamiltonian}) is given by
\begin{equation*}
K_q=
\left( \begin{array}{ccc}
K_{1,1}(q) I^{d\times d} & \cdots & K_{1,n}(q) I^{d\times d}\\
\vdots & \ddots  & \vdots\\
K_{n,1}(q)I^{d\times d} & \cdots & K_{n,n}(q)I^{d\times d}
\end{array} \right) \in \R^{nd \x nd} \,,
\end{equation*}
where $K_{i,j}(q) \in \R$ is given by \autoref{lem:h:extended_cometric} and $I^{d\times d}$ is the identity matrix of size $d$.
\end{lemma}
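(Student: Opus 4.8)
The plan is to treat the two cometrics separately, since they come from different constructions. For the LDDMM cometric $K^H_q$, I would recall the general principle that when a group acts on a manifold and one equips the total space with a right-invariant metric coming from a reproducing kernel Hilbert space $H$, the induced cometric on an orbit (here, landmark space) is obtained by evaluating the kernel at pairs of landmark points. Concretely, for the action of $\on{Diff}_c(\R^d)$ on $\Land$ with infinitesimal action $X \mapsto (X(q^1),\dots,X(q^n))$, the dual map sends a momentum $p = (p^1,\dots,p^n) \in (\R^{nd})^*$ to the vector field $\sum_j \mathbf k^H(\cdot,q^j)p^j$, and the Riemannian submersion property then yields $K^H_q(p,p) = \sum_{i,j} \langle p^i, \mathbf k^H(q^i,q^j) p^j\rangle$. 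This is standard in the LDDMM literature (cf.\@ \cite{Joshi2000,Younes2010,Micheli2012}) and amounts to writing out the horizontal lift and the submersion formula; I would cite this rather than reprove it, or include a one-paragraph derivation.

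For the extended $H^1$-cometric $K_q$, the work is essentially already done in \autoref{lem:h:extended_cometric}. That lemma gives $K_c(\alpha,\beta) = \sum_{i,j=1}^n K_{i,j}(c)\,\langle \alpha^i,\alpha^j\rangle_{\R^d}$ for $c \in \mathcal P\mathcal I^1$, with $K_{i,j}(c)$ an explicit scalar. Since $\Land \subset \mathcal P\mathcal I^1$ is an open subset (as noted in the definition of $\Land$), the formula applies verbatim with $c = q$. Rewriting the bilinear form $K_q(\alpha,\beta) = \sum_{i,j} K_{i,j}(q)\langle \alpha^i,\beta^j\rangle$ as the quadratic form associated to a block matrix, each block is exactly $K_{i,j}(q)$ times the $d\times d$ identity, because the $\R^d$-pairing $\langle\alpha^i,\beta^j\rangle$ contributes the identity matrix in each coordinate block. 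This gives the claimed block form immediately.

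So the proof is really just an assembly: one paragraph invoking the Riemannian-submersion/reproducing-kernel computation for $K^H$, and one short paragraph specializing \autoref{lem:h:extended_cometric} to $q \in \Land$ and rearranging into matrix form. The only mild subtlety — and the part I would be most careful about — is making sure the identification conventions of \autoref{def:p:coordinates} for dual elements (pairing via the Euclidean scalar product on $\R^{n\times d}$) line up with the matrix representation, so that ``the cometric is the matrix $K_q$'' means precisely $K_q(\alpha,\beta) = \alpha^\top K_q \beta$ under those identifications. Once that bookkeeping is fixed, there is no genuine obstacle; the substance of both formulas has been established earlier (the $H^1$ side in \autoref{lem:h:extended_cometric}, the LDDMM side by the standard theory cited in \autoref{sec:l}).

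\begin{proof}
For the LDDMM cometric, recall that $\Land$ is an orbit of $\on{Diff}_c(\R^d)$ under the action $\varphi\cdot(q^1,\dots,q^n) = (\varphi(q^1),\dots,\varphi(q^n))$, whose infinitesimal action maps a vector field $X \in H$ to $(X(q^1),\dots,X(q^n)) \in T_q\Land$. By the defining property of the reproducing kernel, the adjoint of this evaluation map sends a covector $p = (p^1,\dots,p^n)$ to the vector field $x \mapsto \sum_{j=1}^n \mathbf k^H(x,q^j)p^j \in H$. Since $G^H$ is defined so that the orbit map is a Riemannian submersion, the induced cometric is $K^H_q(p,p) = \big\langle \sum_j \mathbf k^H(\cdot,q^j)p^j, \sum_i \mathbf k^H(\cdot,q^i)p^i \big\rangle_H = \sum_{i,j=1}^n \langle p^i, \mathbf k^H(q^i,q^j)p^j\rangle_{\R^d}$, using the reproducing property once more. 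This is precisely the quadratic form of the block matrix with $(i,j)$ block $\mathbf k^H(q^i,q^j) \in \R^{d\times d}$; see \cite{Joshi2000,Younes2010,Micheli2012} for details.

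For the extended $H^1$-cometric, note that $\Land$ is an open subset of $\mathcal P\mathcal I^1$ by the definition of landmark space, so \autoref{lem:h:extended_cometric} applies with $c = q$ and gives
\begin{equation*}
K_q(\alpha,\beta) = \sum_{i,j=1}^n K_{i,j}(q)\,\langle \alpha^i,\beta^j\rangle_{\R^d}
\end{equation*}
for $\alpha,\beta \in (\mathcal P^1)^*$, with $K_{i,j}(q)$ the scalars given in that lemma. Under the identifications of \autoref{def:p:coordinates}, $\alpha$ and $\beta$ are elements of $\R^{n\times d}$ and the pairing of a covector with a vector is the Euclidean scalar product on $\R^{n\times d}$; hence $K_q$ is represented by the matrix $M \in \R^{nd\times nd}$ characterised by $\alpha^\top M\beta = \sum_{i,j} K_{i,j}(q)\langle\alpha^i,\beta^j\rangle$. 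Writing this out coordinate-wise, the $(i,j)$ block of $M$ equals $K_{i,j}(q)$ times the $d\times d$ identity matrix, because $\langle\alpha^i,\beta^j\rangle = (\alpha^i)^\top I^{d\times d}\beta^j$. This is exactly the asserted block form of $K_q$.
\end{proof}
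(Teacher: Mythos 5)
Your proposal matches the paper's proof, which simply attributes the formula for $K^H_q$ to \cite{Micheli2012} and reads off $K_q$ from \autoref{lem:h:extended_cometric}; you take the same two-step route, merely filling in the standard reproducing-kernel/submersion computation that the paper delegates to the citation. Both parts of your argument are correct, including the bookkeeping with the identifications of \autoref{def:p:coordinates}.
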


\begin{proof}
The formula for $K^H_q$ is due to \cite{Micheli2012}, and the one for $K_q$ can be seen from \autoref{lem:h:extended_cometric}.
\end{proof}

\begin{remark}\label{rem:l:interpretation}
The comparison of the cometrics in \autoref{lem:l:cometrics} reveals several differences. First, the $(i,j)$-th entry of the LDDMM cometric $K^H_q$ depends only on $q^i$ and $q^j$, whereas the $(i,j)$-th entry of the extended $H^1$-cometric $K_q$ depends on all of $q=(q^1,\dots,q^n)$. 

Second, the LDDMM cometric typically depends on the pairwise distances between all landmark points, whereas the $H^1$-cometric depends only on the distances between subsequent landmark points. This is illustrated in \autoref{fig:kernels3}, where the Gaussian LDDMM cometric with kernel $\mathbf k^H(q^i,q^j) = \exp(-|q^i-q^j|^2/2) I^{d\times d}$ is compared to the extended $H^1$-cometric. The left and middle plots show the scalar weights which appear in front of the matrices $I^{d\times d}$ in the expressions of the kernels $K_q$ and $K^H_q$ (cf.\@ \autoref{lem:l:cometrics}), and the right plot shows the landmark $q$. Note that there are off-diagonal dark regions in the plot of the LDDMM kernel, but not in the plot of the $H^1$-kernel. The reason is that in contrast to the LDDMM kernel, the $H^1$-kernel disregards that the landmark points marked by a cross in the right plot have a small distance. 
\end{remark}

\begin{figure}
\centering
{\footnotesize
\begin{psfrags}
\include{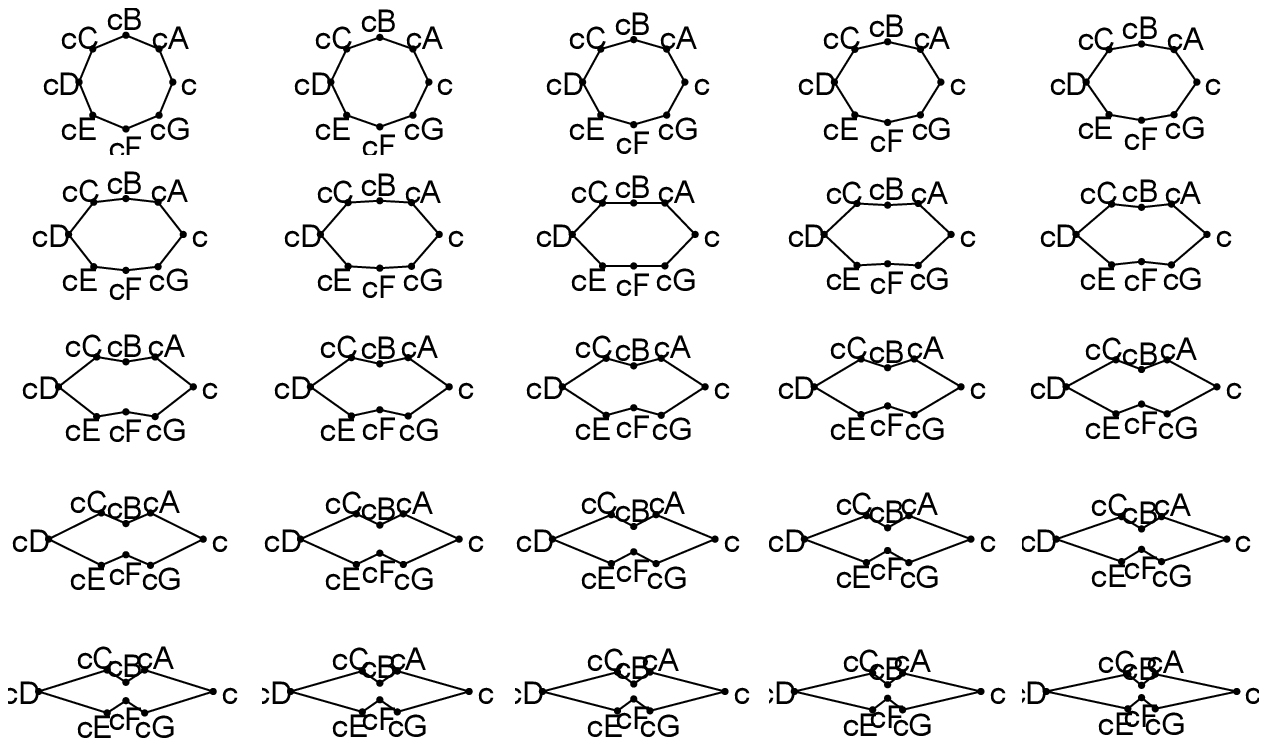}
\includegraphics[width=\textwidth]{lddmm1}
\end{psfrags}}
\caption{A geodesic with respect to the LDDMM metric with the same initial condition as in Figure~\ref{fig:selfintersection4}. Note that the LDDMM metric avoids landmark collisions; the landmarks never touch. See \href{https://arxiv.org/src/1702.04344v3/anc/lddmm8.mp4}{here} or \href{https://www.mat.univie.ac.at/~michor/solitons-lddmm8.mp4}{here} for an animation.}
\label{fig:lddmm1}
\end{figure}

\section{\texorpdfstring{Relation to the basic mapping of Younes et al. {\cite{younes2008metric}}}{Relation to the basic mapping of Younes et al.}}\label{sec:basic}

The basic mapping $\Phi$ of \cite{younes2008metric} is a locally isometric two-fold covering map from a certain Stiefel manifold to the manifold of closed unit-length smooth planar curves. In our setting, i.e., for parametrized closed Lipschitz curves, the basic mapping takes the following form:

\begin{lemma}\label{lem:basic}
Let $d=2$, $i=\sqrt{-1}$, and endow the manifold
\begin{multline*}
S := \bigg\{(e,f) \in L^\infty: \int_{S^1} e^2 - f^2 \ud\theta = \int_{S^1} ef  = 0,\,
\essinf_{\theta \in S^1} e^2+f^2>0\bigg\}\,,
\end{multline*}
with tangent bundle
\begin{multline*}
T_{(e,f)}S =
\bigg\{(\delta e,\delta f) \in L^\infty:
\int_{S^1} (e\delta  e-f\delta  f)\ud\theta =\int_{S^1} (e\delta f + f\delta e) \ud\theta = 0\bigg\}\,,
\end{multline*}
with the Riemannian metric
\begin{equation*}
G^S_{(e,f)}\big((\delta e,\delta f),(\delta e,\delta f)\big)
=
\int_{S^1} \big( \delta e(\theta)^2+\delta f(\theta)^2 \big) \ud \theta\,.
\end{equation*}
Under the identification $\mathbb R^2 \cong \mathbb C$, the mapping
\begin{equation*}
\Phi\colon S \to \ImmL_1, \qquad (e,f) \mapsto \frac12 \int_0^\cdot \big(e(\theta) + i f(\theta) \big)^2 \ud \theta,
\end{equation*}
is a smooth covering map and a local isometry.  
\end{lemma}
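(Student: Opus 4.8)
The plan is to verify the three defining conditions of $S$, compute the differential of $\Phi$, and then check that $\Phi$ is a local diffeomorphism which pulls back the $L^2$ metric on $\ImmL_1$ to $G^S$; the covering property then follows from standard deck-transformation arguments. First I would check that $\Phi$ maps into $\ImmL_1$: writing $z = e + if \in L^\infty$, the curve $c = \Phi(e,f)$ has derivative $c_\theta = \tfrac12 z^2$, so $|c_\theta| = \tfrac12|z|^2 = \tfrac12(e^2+f^2)$, which is essentially bounded away from zero precisely because $(e,f) \in S$; moreover $c(0)=0$ by construction, so $c \in \ImmL_1$, and closedness $c(2\pi)=c(0)$ is exactly the condition $\int_{S^1} z^2 \ud\theta = \int_{S^1}(e^2-f^2)\ud\theta + 2i\int_{S^1}ef\ud\theta = 0$ built into the definition of $S$. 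Second, I would compute $d\Phi_{(e,f)}(\delta e, \delta f) = \int_0^\cdot z(\theta)\,\delta z(\theta)\ud\theta$ where $\delta z = \delta e + i\,\delta f$; the two linear constraints defining $T_{(e,f)}S$ say exactly that $\int_{S^1} z\,\delta z\,\ud\theta = 0$ (real and imaginary parts), which guarantees that $d\Phi(\delta e,\delta f)$ is again a closed curve and hence a tangent vector to $\ImmL_1$ at $c$.

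Next I would establish the isometry property. The $L^2$-type metric underlying $\Phi$ on $\ImmL_1$ should be the ``flat'' inner product $\int_{S^1}\langle h_\theta, k_\theta\rangle\ud s$ divided appropriately — more precisely, for unit-length curves the relevant metric is $\int_{S^1}\langle D_s h, D_s k\rangle\ud s$; I would compute, for $h = d\Phi(\delta e,\delta f)$, that $h_\theta = z\,\delta z$, hence $\langle h_\theta, h_\theta\rangle_{\mathbb R^2} = |z|^2|\delta z|^2 = |z|^2(\delta e^2 + \delta f^2)$, while $\ud s = |c_\theta|\ud\theta = \tfrac12|z|^2\ud\theta$ and $D_s h = h_\theta/|c_\theta| = 2\,\delta z/\bar z$ (using $|z|^2 = z\bar z$), so $|D_s h|^2\ud s = \tfrac{4|\delta z|^2}{|z|^2}\cdot\tfrac12|z|^2\ud\theta = 2(\delta e^2+\delta f^2)\ud\theta$. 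Tracking the constants carefully (and the normalization $\ell_c$, which equals a fixed constant on the image since $\ell_c = \int \tfrac12|z|^2\ud\theta = \tfrac12\int(e^2+f^2)\ud\theta$ — here I should double-check whether $S$ forces this to be constant or whether one restricts to a level set), this matches $G^S$ up to the stated scaling. Since $d\Phi$ is injective (if $z\,\delta z = 0$ a.e.\ with $|z|$ bounded below then $\delta z = 0$) and, by the inverse function theorem in the Banach setting, a linear isomorphism onto the tangent space of $\ImmL_1$, the map $\Phi$ is a local diffeomorphism.

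For the covering property, I would exhibit the deck transformation $(e,f)\mapsto(-e,-f)$, which visibly preserves $S$, its metric, and satisfies $\Phi\circ(-\mathrm{id}) = \Phi$ since $(-z)^2 = z^2$; conversely, if $\Phi(e_1,f_1) = \Phi(e_2,f_2)$ then $z_1^2 = z_2^2$ a.e., so $z_1 = \pm z_2$ on each connected piece, and continuity of $e^2+f^2 > 0$ together with the topology of $S^1$ forces a global sign, giving $(e_1,f_1) = \pm(e_2,f_2)$. Thus $\Phi$ is exactly two-to-one, and being a local diffeomorphism with this fiber structure it is a smooth two-fold covering map; surjectivity onto $\ImmL_1$ (in the planar case) follows because any immersed curve $c$ with $c(0)=0$ has $c_\theta$ nowhere zero, so one can take a continuous (even Lipschitz) square root $z = \sqrt{2c_\theta}$, which is possible on $S^1$ when the relevant winding obstruction vanishes — this is where the two-foldedness enters. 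The main obstacle I anticipate is the careful bookkeeping of normalizing constants and the length functional $\ell_c$ (ensuring the metrics match exactly rather than up to an uncontrolled function of $c$), together with the regularity argument that a Lipschitz square root of an $L^\infty$ function bounded away from zero exists and lies in the right space; the topological square-root/winding-number argument for surjectivity is standard but needs to be stated with the correct hypotheses on the covering.
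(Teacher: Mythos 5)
Your verification that $\Phi$ lands in $\ImmL_1$ (closedness being encoded by the two integral constraints), your computation of $d\Phi$, and your identification of the tangency conditions as $\int_{S^1} z\,\delta z\ud\theta=0$ all match the paper, and the isometry computation is essentially the paper's (your worry about the bookkeeping of $\ell_c$ and the factors of $\tfrac12$ is legitimate, but it is a worry about the statement, not a gap you introduce). The genuine gap is in your covering-map argument. You claim that $\Phi^{-1}(c)$ consists of exactly two points, arguing that $z_1^2=z_2^2$ forces $z_1=\pm z_2$ with a \emph{global} sign ``by continuity of $e^2+f^2>0$ and the topology of $S^1$.'' But $e$ and $f$ are only in $L^\infty$; they are not continuous, and no connectedness argument is available. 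For \emph{any} measurable $\epsilon\colon S^1\to\{-1,1\}$ the pair $(\epsilon e,\epsilon f)$ again lies in $S$ and satisfies $\Phi(\epsilon e,\epsilon f)=\Phi(e,f)$, so the fibers are uncountable; the paper makes this explicit in the corollary (``uniquely up to the choice of a measurable function $S^1\to\{-1,1\}$''). The two-fold structure you describe belongs to the smooth category of Younes et al.\ and does not transfer to the Lipschitz setting.

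Because of this, your route to the covering property (``local diffeomorphism which is two-to-one, hence a two-fold cover'') collapses, and even with the correct fiber description one cannot deduce the covering property from local diffeomorphism plus a fiber count in a Banach-manifold setting. The paper instead constructs explicit evenly covered neighborhoods: sheets $U(e,f)$ of the form $\{r e^{i\phi}(e+if)\}\cap S$ with $r,\phi\in L^\infty$ confined to $(\tfrac{1}{\sqrt2},\sqrt2)\times(-\tfrac{\pi}{2},\tfrac{\pi}{2})$ pointwise, mapped diffeomorphically onto a corresponding neighborhood $V(\Phi(e,f))$, and then shows that the sheets over distinct preimages are disjoint by a pointwise half-plane argument on the positive-measure set where the sign function $\epsilon$ equals $-1$. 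Some version of this construction (or another explicit even covering) is needed; your proposal is missing it. Two smaller points: the winding-number obstruction you invoke for surjectivity is also a smooth-category artifact --- in $L^\infty$ a measurable square root of $2c_\theta$ always exists --- and you should additionally check, as the paper does, that $S$ is in fact a Banach submanifold (surjectivity of the differential of the two constraint functionals) and that $\Phi$ is smooth as a composition of bounded multilinear maps.
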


\begin{proof}
$S$ is a Banach submanifold of $L^\infty$ because the set
\begin{equation}\label{equ:basic:open}
\left\{(e,f) \in L^\infty: \essinf_{\theta \in S^1} |e(\theta)|^2+|f(\theta)|^2>0\right\}
\end{equation}
is open in $L^\infty$ (cf.~\autoref{thm:m:manifold}) and because the differential of the mapping
\begin{equation*}
L^\infty \to \mathbb R^2, \qquad (e,f) \mapsto \left(\int_{S^1} \big(e(\theta)^2-f(\theta)^2\big)\ud\theta , \int_{S^1}e(\theta)f(\theta)\right)
\end{equation*}
is surjective at any point in $S$, as can be seen by differentiation at the point $(e,f) \in S$ in the directions $(e,-f)$ and $(f,e)$. 

The mapping $\Phi$ is well-defined because the conditions
\begin{equation*}
\int_{S^1} \big(e(\theta)^2-f(\theta)^2\big)\ud\theta = \int_{S^1}e(\theta)f(\theta) = 0\,,
\end{equation*} 
readily imply that $\Phi(e,f)$ is a $2\pi$-periodic function. Moreover, $\Phi$ is smooth because it is a composition of bounded (multi-)linear mappings. 

To verify that $\Phi$ is a covering map, define for any $c \in \ImmL_1$ and $(e,f) \in S$,
\begin{multline*}
U(e,f) = \left\{r e^{i\phi} (e+if) \!:\! (r, \phi) \in L^\infty, \frac{1}{\sqrt 2} \!<\! \essinf_\theta r(\theta) \!\leq\! \esssup_\theta r(\theta) \!<\! \sqrt 2, \right.
\\
\left.-\frac{\pi}{2} < \essinf_\theta \phi(\theta)\leq \esssup_\theta \phi(\theta)<\frac{\pi}{2} \right\} \cap S,
\end{multline*}
\begin{multline*}
V(c) = \left\{\int_0^\cdot r e^{i\phi} c_\theta\ud\theta: (r, \phi) \in L^\infty, \frac12 < \essinf_\theta r(\theta)\leq \esssup_\theta r(\theta)<2, \right.
\\
\left.-\pi < \essinf_\theta \phi(\theta)\leq \esssup_\theta \phi(\theta)<\pi \right\} \cap \Lip_1 .
\end{multline*}
Then $U(e,f)$ is an open neighbourhood of $(e,f) \in S$, $V(c)$ is an open neighborhood of $c \in \ImmL_1$, and $\Phi$ maps $U(e,f)$ diffeomorphically to $V(\Phi(e,f))$. Moreover, for any distinct elements $(e,f)$ and $(\tilde e,\tilde f)$ of $\Phi^{-1}(c)$, the sets $U(e,f)$ and $U(\tilde e,\tilde f)$ are disjoint. To see this, note that there is a measurable function $\epsilon:S^1\to\{-1,1\}$ such that 
\begin{equation*}
(\tilde e(\theta),\tilde f(\theta))=\epsilon(\theta)(e(\theta),f(\theta))
\end{equation*}
holds for Lebesgue almost every $\theta \in S^1$. The set of all $\theta \in S^1$ with the property that $\epsilon(\theta)=-1$ has positive Lebesgue measure because $(e,f)\neq(\tilde e,\tilde f)$, and for any such $\theta$ the half planes
\begin{equation*}
\left\{re^{i\phi}\big(e(\theta)+if(\theta)\big):\frac{1}{\sqrt 2}<r<\sqrt 2, -\frac{\pi}{2}<\phi<\frac{\pi}{2}\right\}
\end{equation*}
and
\begin{equation*}
\left\{re^{i\phi}\big(\tilde e(\theta)+i\tilde f(\theta)\big):\frac{1}{\sqrt 2}<r<\sqrt 2, -\frac{\pi}{2}<\phi<\frac{\pi}{2}\right\}
\end{equation*}
don't intersect. It follows that $U(e,f)\cap U(\tilde e,\tilde f)=\emptyset$. Thus, for any $c \in \ImmL_1$ the set $\Phi^{-1}(V(c))$ is a disjoint union of open sets, which are diffeomorphic to $V(c)$, and we have shown that $\Phi$ is a covering map. 

To see that $\Phi$ is a local isometry, note that the derivative of $\Phi$ is given by
\begin{align*}
T_{(e,f)}\Phi(\delta e,\delta f) 
&=
\int_0^\cdot (e+if)(\delta e+i\delta f)\ud \theta\,.  
\end{align*}
Therefore, 
\begin{align*}
D_s T_{(e,f)}\Phi(\delta e,\delta f) 
&=
\frac{(e+if)(\delta e+i\delta f)}{|e+if|^2}\,.
\end{align*}
This implies that $\Phi$ is a local isometry:
\begin{align*}
G_{\Phi(e,f)}\big(T_{(e,f)}\Phi(\delta e,\delta f), &T_{(e,f)}\Phi(\delta e,\delta f)\big)
\\ &=
\int_{S^1} \left|\frac{(e+if)(\delta e+i\delta f)}{|e+if|^2}\right|^2 |e+if|^2 \!\ud\theta
\\&=
\int_{S^1} |e+if|^2\ud\theta 
= 
G^S_{(e,f)}\big((\delta e,\delta f),(\delta e,\delta f)\big)\,.
\qedhere
\end{align*}
\end{proof}

\begin{corollary}
In the setting of \autoref{lem:basic} the following statements hold:
\begin{enumerate}[label=(\roman*)]
\item Under the mapping $\Phi$, geodesics on $S$ project down to geodesics on $\ImmL_1$, and conversely, geodesics on $\ImmL_1$ can be lifted (uniquely up to the choice of a measurable function $S^1\to\{-1,1\}$) to geodesics on $S$.

\item Let $\operatorname{St}$ denote the Stiefel manifold of $L^2(\ud \theta)$-orthonormal pairs $(e,f) \in S$. Then $\Phi$ restricts to a smooth covering map and local isometry
\begin{equation*}
\Phi\colon \operatorname{St} \to \left\{c \in \ImmL_1: \ell_c = 1\right\}.
\end{equation*}
\end{enumerate}
\end{corollary}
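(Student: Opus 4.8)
The plan is to deduce both parts from the single fact proved in \autoref{lem:basic}, namely that $\Phi$ is a smooth covering map and a local isometry; everything beyond that is soft. Throughout I would take ``geodesic'' to mean ``critical point of the Riemannian energy with respect to variations with fixed endpoints'', which avoids having to construct a Levi--Civita connection for the weak metric $G^S$ on $S$ and makes the transfer across $\Phi$ painless.

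For part~(i), the first step is to note that, since $\Phi$ is a local isometry, $G^S_{\gamma(t)}(\gamma_t,\gamma_t)=G_{\Phi\circ\gamma(t)}(T\Phi\cdot\gamma_t,T\Phi\cdot\gamma_t)$ for every $C^1$ path $\gamma$ in $S$, hence $E^S(\gamma)=E(\Phi\circ\gamma)$, where $E$ is the energy of $G$ on $\ImmL_1$ (which is isometric to $\ImmL_0$, so that the well-posedness of \autoref{thm:m:exp} applies) and $E^S$ is the energy of $G^S$. Then, given a geodesic $\gamma$ in $S$ and a fixed-endpoint variation $(c_s)$ of $c=\Phi\circ\gamma$ in $\ImmL_1$, I would lift the homotopy $(s,t)\mapsto c_s(t)$ through the covering $\Phi$ to a smooth family $(\gamma_s)$ with $\gamma_0=\gamma$; because the lifts of the constant boundary paths $s\mapsto c_s(0)$ and $s\mapsto c_s(1)$ are constant, $(\gamma_s)$ still has fixed endpoints, the first variations agree, $\frac{d}{ds}E(c_s)=\frac{d}{ds}E^S(\gamma_s)$, and this vanishes at $s=0$ since $\gamma$ is a geodesic; hence $c$ is a geodesic. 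The converse is the mirror image: path-lift a geodesic $c$ of $\ImmL_1$ to $\gamma$ in $S$ after choosing a point of $\Phi^{-1}(c(0))$, push fixed-endpoint variations of $\gamma$ forward by $\Phi$, and conclude criticality of $E^S$ at $\gamma$. The uniqueness clause is then read off from the description of $\Phi^{-1}(c(0))$ already obtained inside the proof of \autoref{lem:basic}: any two preimages differ by multiplication with a measurable $\epsilon\colon S^1\to\{-1,1\}$ and every such $\epsilon$ produces a preimage, so choosing the lift amounts exactly to choosing $\epsilon$.

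For part~(ii), the computational core is the single identity $|\Phi(e,f)_\theta|=\tfrac12|e+if|^2=\tfrac12(e^2+f^2)$, which gives $\ell_{\Phi(e,f)}=\tfrac12\int_{S^1}(e^2+f^2)\ud\theta$. This shows at once that $\Phi$ carries $\operatorname{St}=\{(e,f)\in S:\int_{S^1}(e^2+f^2)\ud\theta=2\}$ onto $\{c\in\ImmL_1:\ell_c=1\}$ and, since $e^2+f^2$ is unchanged under $(e,f)\mapsto(\epsilon e,\epsilon f)$, that in fact $\operatorname{St}=\Phi^{-1}(\{c\in\ImmL_1:\ell_c=1\})$. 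Next I would verify that $1$ is a regular value of the smooth length functional $c\mapsto\ell_c$ on $\ImmL_1$ by differentiating in the radial direction $h=c$ (a legitimate tangent vector at $c$ since $c(\theta_0)=0$), obtaining $\ud\ell_c(c)=\ell_c>0$; this makes $\{c\in\ImmL_1:\ell_c=1\}$ a splitting submanifold, and its preimage $\operatorname{St}$ under the local diffeomorphism $\Phi$ a splitting submanifold of $S$, which one identifies with the asserted Stiefel manifold of $L^2(\ud\theta)$-orthonormal pairs in $S$. Finally, the covering and local isometry properties restrict for free: an evenly covered neighbourhood $V(c)$ meets $\{c\in\ImmL_1:\ell_c=1\}$ in an evenly covered neighbourhood because $\operatorname{St}=\Phi^{-1}(\{\ell_c=1\})$, and $T_{(e,f)}\Phi$, which intertwines the two length differentials (as $\ell\circ\Phi=\tfrac12\int_{S^1}(e^2+f^2)\ud\theta$), restricts to a metric-preserving isomorphism $T_{(e,f)}\operatorname{St}\to T_{\Phi(e,f)}\{\ell_c=1\}$; surjectivity of the restricted map follows from that of $\Phi$ itself, which one sees by solving $e+if=\sqrt{2c_\theta}$ with a measurable square root, landing in $S$ because $c$ is a closed immersion.

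The routine calculations aside, the step I expect to need genuine care is the homotopy lifting in part~(i): one must check that a smooth two-parameter family lifts to a smooth family and, crucially, that constant boundary paths lift to constant paths, so that the condition of fixed endpoints is preserved both under lifting and under pushing forward. Phrasing geodesics through the energy functional rather than through a connection is precisely what keeps this bookkeeping manageable and sidesteps the absence of a strong Riemannian structure on $S$.
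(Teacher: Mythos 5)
Your argument is correct and follows the route the paper intends: the paper's own proof is the one-liner ``this follows trivially from Lemma~\ref{lem:basic}'', and your write-up simply supplies the standard details (energy-critical-point formulation, homotopy lifting through the covering, the identity $|\Phi(e,f)_\theta|=\tfrac12(e^2+f^2)$ identifying $\operatorname{St}$ with $\Phi^{-1}(\{\ell_c=1\})$). Nothing in your elaboration deviates from or adds a gap to that intended argument.
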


\begin{proof}
This follows trivially from \autoref{lem:basic}; cf.\@ \cite{younes2008metric}.
\end{proof}

\begin{remark} 
The space of unit length curves can be considered either as a submanifold of $\ImmL_1$ or as a quotient of $\ImmL_1$ modulo scalings. The submanifold and quotient metrics coincide because the scaling momentum $\partial_t (\operatorname{log} \ell(t))$ of the action of the scaling group is invariant. 
Therefore, geodesics with respect to the submanifold metric, which are studied in \cite{younes2008metric}, are  geodesics in the space of immersions  modulo scalings. The submanifold of unit length curves is, however, not totally geodesic in $\ImmL_1$.
Therefore, geodesics with respect to the submanifold metric are not geodesics in $\ImmL_1$. 
\end{remark}

\appendix

\section{Smoothness of the arc length derivative}\label{sec:a}

The aim of this section is to show that the mappings $c\mapsto|c_\theta|$ and $c\mapsto|c_\theta|^{-1}$ are smooth, where the subscript denotes the derivative with respect to $\theta\in S^1$. This is used in \autoref{sec:m} to showed that the first order Sobolev metric is smooth on the space $\ImmL_0(S^1,\R^d)$. We present two proofs: one using convenient calculus and the other one directly using Fr\'echet derivatives on Banach spaces. The strategy of the first proof is presented here for the first time and is of independent interest. 

\subsection*{Proof using convenient calculus}

\begin{result}{\rm\cite[\mbox{4.1.19}]{FK88}}
\label{res:a:smooth}
Let $c:\mathbb R\to E$ be a curve in a convenient vector space $E$. Let 
$\mathcal{V}\subset E'$ be a subset of bounded linear functionals such that 
the bornology of $E$ has a basis of $\sigma(E,\mathcal{V})$-closed sets. 
Then $c$ is smooth if and only if the following property holds:
\begin{itemize}
\item There exist locally bounded curves $c^{k}:\mathbb R\to E$ such that $\ell\circ c$ is smooth $\mathbb R\to \mathbb R$ with $(\ell\circ c)^{(k)}=\ell\circ c^{k}$, for each $\ell\in\mathcal V$. 
\end{itemize}
Moreover, if $E$ is reflexive, then for any point separating subset
$\mathcal{V}\subset E'$ the bornology of $E$ has a basis of 
$\si(E,\mathcal{V})$-closed subsets, by {\rm \cite[\mbox{4.1.23}]{FK88}}.
\end{result}

For any path $c$ in some space of $\R^d$-valued functions on $S^1$, we write $\hat c$ for the corresponding mapping $\hat c:\R \times S^1\to\R^d$.

\begin{lemma}
\label{lem:a:smooth1}
The space $C^\infty(\mathbb R,\Lip)$ consists of all mappings $\hat c:\mathbb R\x S^1\to \mathbb R^d$ with the following property:
\begin{itemize}
\item For fixed $\th\in S^1$ the function $x\mapsto \hat c(x,\th)\in \mathbb R^d$ is smooth and each derivative $x\mapsto \p_x^k \hat c(x,\;)$ is a locally bounded curve $\mathbb R\to \Lip$.
\end{itemize}
\end{lemma}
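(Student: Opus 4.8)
The plan is to apply the characterization of smooth curves in convenient vector spaces from \autoref{res:a:smooth}, with $E = \Lip$ and a well-chosen point-separating family $\mathcal V \subset E'$ of bounded linear functionals. The key observation is that $\Lip = W^{1,\infty}(S^1,\R^d)$ is the dual of a separable Banach space (namely $\Lip \cong (L^1/\!\sim)^* $ via the pairing against functions and their derivatives, or more concretely $\Lip$ can be realized as a dual space), hence it has a predual, so the weak-$*$ topology is point-separating and one can hope the bornology has a basis of $\sigma(E,\mathcal V)$-closed sets. A cleaner route, which I would take, is to use the family $\mathcal V$ consisting of the evaluations $\ev_\th^j\colon c \mapsto c(\th)_j$ for $\th \in S^1$, $j \in \{1,\dots,d\}$, together with evaluations of the derivative $c \mapsto c_\theta(\th)_j$ at points $\th$ of differentiability; these are bounded on $\Lip$ and separate points. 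Since point evaluations already separate points of the \emph{continuous} functions, and $c_\theta \in L^\infty$ is determined (up to a null set, but $\Lip$-functions are genuinely differentiable a.e. with the derivative determining the function via integration) this family is point-separating.

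First I would verify that the evaluation functionals $\ev_\th$ lie in $\Lip'$: indeed $|\ev_\th(c)| = |c(\th)| \le \|c\|_{L^\infty} \le \|c\|_{\Lip}$. Second, I would check the hypothesis of \autoref{res:a:smooth} that the bornology of $\Lip$ has a basis of $\sigma(\Lip, \mathcal V)$-closed sets; for this I would use that bounded, $\sigma(\Lip,\mathcal V)$-closed sets form such a basis, which follows because a norm-bounded set in $\Lip$ that is closed under pointwise limits of itself (and its derivatives) is norm-bounded and closed — the point being that the closed balls of $\Lip$ are $\sigma(\Lip, \mathcal V)$-closed, since pointwise convergence preserves both the $L^\infty$-bound on $c$ and (via the mean-value theorem / Lipschitz estimate) the $L^\infty$-bound on $c_\theta$. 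Third, with the hypothesis in place, \autoref{res:a:smooth} says $c\colon \R \to \Lip$ is smooth if and only if there are locally bounded curves $c^k\colon \R \to \Lip$ with $(\ell\circ c)^{(k)} = \ell \circ c^k$ for all $\ell \in \mathcal V$. Taking $\ell = \ev_\th$, the condition $(\ev_\th \circ c)^{(k)} = \ev_\th \circ c^k$ says precisely that $x \mapsto \hat c(x,\th)$ is $k$-times differentiable with $\p_x^k \hat c(x,\th) = c^k(x)(\th)$, i.e. $c^k(x) = \p_x^k \hat c(x,\cdot)$; and local boundedness of $c^k$ in $\Lip$ is exactly the stated condition that $x \mapsto \p_x^k\hat c(x,\cdot)$ be a locally bounded curve into $\Lip$. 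This establishes the equivalence in both directions.

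The main obstacle I expect is the verification that the bornology of $\Lip$ has a basis of $\sigma(\Lip,\mathcal V)$-closed sets — i.e. the applicability of the second sentence of \autoref{res:a:smooth}. Since $\Lip = W^{1,\infty}$ is \emph{not} reflexive, I cannot simply invoke the reflexivity clause; instead I must argue directly that closed balls (or some cofinal family of bounded sets) are closed in the topology of pointwise convergence of the function together with its derivative. The delicate point is handling the derivative: if $c_n \to c$ pointwise and $(c_n)_\theta \to g$ pointwise (at points of common differentiability) with $\|(c_n)_\theta\|_{L^\infty}\le R$, one needs $g$ to be (a.e. equal to) $c_\theta$ and $\|c_\theta\|_{L^\infty}\le R$ — this follows from writing $c_n(\th) = c_n(0) + \int_0^\th (c_n)_\theta$, passing to the limit by dominated convergence, and concluding $c(\th) = c(0) + \int_0^\th g$, so $c$ is Lipschitz with constant $R$ and $c_\theta = g$ a.e. Once this closedness of balls is secured, the rest is a direct unwinding of \autoref{res:a:smooth} and presents no further difficulty.
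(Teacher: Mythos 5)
Your overall strategy is the same as the paper's: both apply \autoref{res:a:smooth} to $E=\Lip$ with a family $\mathcal V$ built from point evaluations, and the final unwinding of the condition $(\ell\circ c)^{(k)}=\ell\circ c^k$ into the statement of the lemma is identical. There is, however, one genuine defect in your choice of $\mathcal V$: the derivative evaluations $c\mapsto \langle\la, c_\theta(\th)\rangle$ are not elements of $\Lip'$. The derivative of a Lipschitz function is only an equivalence class in $L^\infty$, and the set of points where the classical derivative exists varies with $c$ (for any fixed $\th$ there are $c\in\Lip$, e.g.\ piecewise linear ones with a breakpoint at $\th$, that are not differentiable there), so ``evaluation of $c_\theta$ at $\th$'' is not a linear functional defined on all of $\Lip$, and there is no canonical bounded extension. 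Consequently the step in your closedness argument that uses $(c_n)_\theta\to g$ pointwise and dominated convergence is not available: the topology $\sigma(\Lip,\mathcal V)$ cannot legitimately encode pointwise convergence of derivatives. Fortunately this step is also unnecessary. Take $\mathcal V$ to consist only of the directional point evaluations $c\mapsto\langle\la,c(\th)\rangle$ (exactly the paper's choice $\on{ev}^\la_\th$); these separate points since elements of $\Lip$ are genuine continuous functions determined together with their derivatives by their values. Then the other argument you sketch already closes the balls: if a net $(c_\al)$ in $\{c:\|c\|_{L^\infty}+\|c_\theta\|_{L^\infty}\le R\}$ converges pointwise to $c$, then $|c(\th)-c(\th')|=\lim_\al|c_\al(\th)-c_\al(\th')|$ shows that the Lipschitz constant of $c$ is at most $\liminf_\al\|(c_\al)_\theta\|_{L^\infty}$, while $\|c\|_{L^\infty}\le\liminf_\al\|c_\al\|_{L^\infty}$, and superadditivity of $\liminf$ keeps $c$ in the ball. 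No convergence of derivatives is needed.

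With that repair your proof is correct, and it differs from the paper's only in how the hypothesis of \autoref{res:a:smooth} (closedness of balls for $\sigma(\Lip,\mathcal V)$) is verified. The paper transports the unit ball through the explicit isomorphism $\Lip\cong L^\infty=(L^1)^*$ of \eqref{equ:c:isomorphism}, views $\mathcal V$ as a subset of the predual $L^1$, and concludes that the ball is $\sigma(\Lip,\mathcal V)$-compact (hence closed) because $\sigma(\Lip,\mathcal V)$ is a coarser Hausdorff topology than the weak$^*$ topology, for which the ball is compact by Alaoglu. You instead prove closedness of the ball by hand from the Lipschitz estimate. Your route is more elementary; the paper's route has the advantage of being reusable verbatim for $L^\infty$ in \autoref{lem:a:smooth2} and of not requiring any pointwise limit argument. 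You are also right that reflexivity cannot be invoked here, since $W^{1,\infty}$ is not reflexive; neither proof uses it.
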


\begin{proof}
The space $\Lip$ is linearly isomorphic to the space $L^\infty$ via the isomorphism
\begin{equation}\label{equ:c:isomorphism}\begin{gathered}
L^\infty\to \Lip
\\
f \mapsto 
\Big (\th\mapsto\int_{0}^\th \Big(f(\al)-\frac1{2\pi}\int_{S^1}f(\be)d\be\Big)\ud\al + \frac1{2\pi}\int_{S^1}f(\be)\ud\be\Big)
\end{gathered}\end{equation}
Thus, $\Lip$ is isomorphic to the dual space of $L^1$. We take $\mathcal V$ as the set of directional point evaluations $\on{ev}^\la_\th := \langle\lambda,\delta_\theta(\cdot)\rangle_{\mathbb R^d}$ for $\th\in S^1$ and $\la \in \R^d$. Then $\mathcal V$ can be seen as a subset of $L^1$ using the isomorphism \eqref{equ:c:isomorphism}. Therefore, the topology $\si(\Lip,\mathcal V)$ is coarser on the unit ball $\bigcirc \Lip$ than the weak$^*$-star topology, for which $\bigcirc \Lip$ is compact. As $\si(\Lip,\mathcal V)$ is Hausdorff, the unit ball $\bigcirc \Lip$ is compact for $\si(\Lip,\mathcal V)$, thus $\si(\Lip,\mathcal V)$-closed.
So the condition of \autoref{res:a:smooth} is satisfied, and the statement of the lemma follows.
\end{proof}

\begin{lemma}
\label{lem:a:smooth2}
The space $C^\infty(\mathbb R,L^\infty)$ consists of all  sequences of locally bounded mappings $\hat c_k: \mathbb R\x S^1\to \mathbb R^d$ such that:
\begin{itemize}
\item For fixed $f\in C^\infty(S^1)$  each function $x\mapsto \int_{S^1}f(\th)\hat c_k(x,\th)\,d\th\in \mathbb R^d$ is smooth and  $\p_x \int_{S^1}f(\th)\hat c_k(x,\th)\,d\th = \int_{S^1}f(\th)\hat c_{k+1}(x,\th)\,d\th$.
\end{itemize}
\end{lemma}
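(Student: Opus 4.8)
The plan is to apply \autoref{res:a:smooth} to the Banach space $E=L^\infty=L^\infty(S^1,\R^d)$, in complete analogy with the proof of \autoref{lem:a:smooth1}, where $\Lip$ played this role. As a first step I would recall that $L^\infty$ is the dual of $L^1=L^1(S^1,\R^d)$ under the pairing $\langle g,h\rangle=\int_{S^1}\langle g(\th),h(\th)\rangle_{\R^d}\ud\th$, and I would take as test functionals
\[
\mathcal V:=\big\{\,\big(\th\mapsto f(\th)\la\big)\;:\;f\in C^\infty(S^1),\ \la\in\R^d\,\big\}\ \subset\ L^1\ \subset\ (L^\infty)'.
\]
(Point evaluations, which were available for $\Lip$, are not continuous on $L^\infty$, so integration against smooth functions is used instead.) For a curve $c\colon\R\to L^\infty$, written $\hat c\colon\R\x S^1\to\R^d$, the functional $h_{f,\la}=\big(\th\mapsto f(\th)\la\big)\in\mathcal V$ evaluates to $h_{f,\la}(c(x))=\big\langle\la,\int_{S^1}f(\th)\hat c(x,\th)\ud\th\big\rangle_{\R^d}$, so that smoothness of $x\mapsto h_{f,\la}(c(x))$ for all $\la\in\R^d$ is the same as smoothness of $x\mapsto\int_{S^1}f(\th)\hat c(x,\th)\ud\th\in\R^d$.

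Next I would verify the two properties of $\mathcal V$ required by \autoref{res:a:smooth}. First, $\mathcal V$ is point separating on $L^\infty$ because $C^\infty(S^1)$ is dense in $L^1(S^1)$: if $\int_{S^1}f(\th)\langle\la,g(\th)\rangle_{\R^d}\ud\th=0$ for every $f\in C^\infty(S^1)$ and every $\la\in\R^d$, then $g=0$ almost everywhere. Second, the bornology of $L^\infty$ has a basis of $\si(L^\infty,\mathcal V)$-closed sets: such a basis is given by the balls $n\cdot\bigcirc L^\infty$, $n\in\mathbb N$, and each of these is $\si(L^\infty,L^1)$-compact by the Banach--Alaoglu theorem; since $\mathcal V\subset L^1$ the topology $\si(L^\infty,\mathcal V)$ is coarser than the weak$^*$ topology $\si(L^\infty,L^1)$ and, being induced by a point-separating family, is Hausdorff, so the identity is a continuous bijection from the weak$^*$-compact ball to the same ball equipped with $\si(L^\infty,\mathcal V)$, hence a homeomorphism; therefore the ball is $\si(L^\infty,\mathcal V)$-compact, in particular $\si(L^\infty,\mathcal V)$-closed. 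Since $L^\infty$ is not reflexive, the concluding remark of \autoref{res:a:smooth} is not available and this step must be carried out by hand, exactly as was already done for $\Lip$.

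With the hypotheses in place, \autoref{res:a:smooth} asserts that the curve $c=\hat c_0$ is smooth if and only if there exist locally bounded curves $\hat c_k\colon\R\to L^\infty$ with $(\ell\circ c)^{(k)}=\ell\circ\hat c_k$ for all $k\geq0$ and all $\ell\in\mathcal V$. The family of these identities over all $k$ is equivalent, by induction (differentiate one to obtain the next), to the single-step relations $\partial_x(\ell\circ\hat c_k)=\ell\circ\hat c_{k+1}$ for all $k\geq0$ together with smoothness of each $\ell\circ\hat c_k$. Inserting $\ell=h_{f,\la}$ and using the evaluation formula above, these become precisely the conditions in the statement: for each fixed $f\in C^\infty(S^1)$ the function $x\mapsto\int_{S^1}f(\th)\hat c_k(x,\th)\ud\th\in\R^d$ is smooth with $\partial_x\int_{S^1}f(\th)\hat c_k(x,\th)\ud\th=\int_{S^1}f(\th)\hat c_{k+1}(x,\th)\ud\th$. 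I expect the main obstacle to be the bornology-closedness step, but it goes through verbatim because $\mathcal V\subset L^1$ and the unit ball of $L^\infty$ is weak$^*$-compact; a harmless but worth mentioning subtlety is the passage between the scalar functionals $h_{f,\la}$ and the $\R^d$-valued integrals of the statement, which is justified since smoothness of an $\R^d$-valued curve is equivalent to smoothness of all its scalar components $x\mapsto\langle\la,\hat c_k(x,\cdot)\rangle$.
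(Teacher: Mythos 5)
Your proposal is correct and follows essentially the same route as the paper: both apply \autoref{res:a:smooth} with $\mathcal V$ given by integration against smooth functions, and both establish the bornology condition by noting that $\sigma(L^\infty,C^\infty)$ is a Hausdorff topology coarser than the weak\textsuperscript{*} topology $\sigma(L^\infty,L^1)$, for which the balls are compact, hence $\sigma(L^\infty,C^\infty)$-compact and closed. You simply spell out the point-separation and compactness arguments in more detail than the paper does, and your reduction of the higher-order identities to the single-step relations is the same bookkeeping implicit in the paper's statement.
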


\begin{proof}
The topology $\si(L^\infty, C^\infty)$ is coarser than 
$\si(L^\infty, L^1)$ for which the unit ball 
$\bigcirc L^\infty$ is compact. Since  $\si(L^\infty, C^\infty)$ is Hausdorff, the unit ball $\bigcirc L^\infty$ is also compact for the topology $\si(L^\infty, C^\infty)$ and thus $\si(L^\infty, C^\infty)$-closed.
So the condition of \autoref{res:a:smooth} is satisfied, and the statement of the lemma follows.
\end{proof}

\begin{corollary}\label{cor:a:smooth}
The mappings $c\mapsto |c_\theta|$ and $c\mapsto |c_\theta|^{-1}$ are smooth from $\ImmL$ to $L^\infty(S^1)$.
\end{corollary}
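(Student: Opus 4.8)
The plan is to combine the smooth-curve description of convenient smoothness with \autoref{lem:a:smooth1} and \autoref{lem:a:smooth2}. Since $\ImmL$ is open in the convenient vector space $\Lip$, a map $\Phi\colon\ImmL\to L^\infty$ is smooth precisely when $\Phi\circ c\in C^\infty(\R,L^\infty)$ for every $c\in C^\infty(\R,\ImmL)\subseteq C^\infty(\R,\Lip)$ \cite{FK88}, so I fix such a curve $c$. I would first reduce the two maps to one scalar map: the $\theta$-derivative $\p_\theta\colon\Lip\to L^\infty$ is bounded linear, hence smooth, and the fibrewise inner product $(q_1,q_2)\mapsto\langle q_1,q_2\rangle$ is a bounded bilinear map $L^\infty\x L^\infty\to L^\infty(\R)$, hence smooth; therefore $x\mapsto r(x):=\langle\hat c_\theta(x,\cdot),\hat c_\theta(x,\cdot)\rangle$ already lies in $C^\infty(\R,L^\infty(\R))$. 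Writing $|c_\theta|=\Psi\circ r$ with $\Psi(s)=s^{1/2}$ and $|c_\theta|^{-1}=\Psi\circ r$ with $\Psi(s)=s^{-1/2}$, the task reduces to showing that, for a fixed smooth $\Psi\colon(0,\infty)\to\R$, the curve $x\mapsto\Psi\circ r(x)$ lies in $C^\infty(\R,L^\infty(\R))$.

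Next I would fix a compact interval $I\subset\R$ and record the estimates that drive the argument. The curve $r|_I$ is bounded in $L^\infty$, say $0\le r(x,\th)\le M$, while the continuity of $c\mapsto\essinf_\th|c_\th|$ (see the proof of \autoref{thm:m:manifold}) together with $c(x)\in\ImmL$ forces $r(x,\th)\ge\delta>0$ for a.e.\ $\th$ and all $x\in I$; thus $r$ takes its values in the compact set $[\delta,M]\subset(0,\infty)$, on which $\Psi$ and all its derivatives are bounded. By \autoref{lem:a:smooth2}, $r$ is represented by a sequence $(\hat r_k)$ of locally bounded maps $\R\x S^1\to\R$ satisfying $\p_x\int_{S^1}f\,\hat r_k\ud\th=\int_{S^1}f\,\hat r_{k+1}\ud\th$ for all $f\in C^\infty(S^1)$, and by \autoref{lem:a:smooth1} applied to $c$, together with a Taylor expansion in $x$, one checks that for a.e.\ $\th$ the function $x\mapsto\hat r_0(x,\th)$ is smooth with $\p_x^k\hat r_0(x,\th)=\hat r_k(x,\th)$. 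I would then \emph{define} $\hat g_k(x,\th)$ by the Fa\`a di Bruno formula for $\p_x^k\big(\Psi(\hat r_0(x,\th))\big)$, a universal polynomial in $\hat r_1,\dots,\hat r_k$ with coefficients $\Psi^{(1)}(\hat r_0),\dots,\Psi^{(k)}(\hat r_0)$; the bounds just recorded make each $\hat g_k$ locally bounded, and the classical chain rule gives $\p_x^k\big(\Psi(\hat r_0(x,\th))\big)=\hat g_k(x,\th)$ for a.e.\ $\th$.

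It then remains to verify the hypothesis of \autoref{lem:a:smooth2} for the sequence $(\hat g_k)$: for every $f\in C^\infty(S^1)$ the map $x\mapsto\int_{S^1}f\,\hat g_0\ud\th$ should be smooth with derivative $\int_{S^1}f\,\hat g_1\ud\th$, and inductively $\p_x\int_{S^1}f\,\hat g_k\ud\th=\int_{S^1}f\,\hat g_{k+1}\ud\th$. This is a differentiation-under-the-integral-sign argument: over $I$ the difference quotients of $x\mapsto\hat g_k(x,\th)$ converge pointwise a.e.\ to $\hat g_{k+1}(x,\th)$ and are dominated, uniformly in $x\in I$ and $\th$, by a constant, so dominated convergence applies. \autoref{lem:a:smooth2} then gives $x\mapsto\Psi\circ r(x)=\hat g_0(x,\cdot)\in C^\infty(\R,L^\infty(\R))$, hence $\Phi\circ c\in C^\infty(\R,L^\infty)$, and since $c$ was arbitrary, $\Phi$ is smooth.

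I expect the delicate point to be this last step --- turning the pointwise-in-$\th$ chain-rule information into the weak characterization of $C^\infty(\R,L^\infty)$ --- and everything there hinges on the uniform lower bound $\essinf_\th|c_\th|\ge\delta$ over compact $x$-intervals, which is precisely where the openness of $\ImmL$ enters: it confines the values of $r$ to a compact subset of $(0,\infty)$ on which $\Psi$ is $C^\infty$ with bounded derivatives, so that the multiplication operators $h\mapsto\Psi^{(j)}(\hat r_0)\,h$ on $L^\infty$ are uniformly bounded and the domination holds. A more algebraic alternative avoids Fa\`a di Bruno entirely: inversion $r\mapsto r^{-1}$ is real-analytic on the open set of invertible elements of the commutative Banach algebra $L^\infty(\R)$ (an element being invertible there iff it is essentially bounded away from $0$), and $r\mapsto r^{1/2}$ is, near any $r_0$ with $\essinf r_0>0$, given by the norm-convergent binomial series $\sqrt{r_0}\sum_{k\ge0}\binom{1/2}{k}\big((r-r_0)/r_0\big)^k$, hence also real-analytic; composing these with the smooth map $c\mapsto\langle c_\th,c_\th\rangle$ proves the corollary directly, at the cost of not using \autoref{lem:a:smooth1} or \autoref{lem:a:smooth2}.
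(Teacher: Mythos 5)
Your argument is correct and follows essentially the same route as the paper's first proof of this corollary: test along smooth curves, use \autoref{lem:a:smooth1} to get pointwise smoothness in $x$ together with locally uniform bounds on $\esssup_\theta|\hat c_\theta|$ and $\esssup_\theta|\hat c_\theta|^{-1}$, differentiate under the integral sign, and conclude via \autoref{lem:a:smooth2}; factoring through $r=\langle c_\theta,c_\theta\rangle$ and invoking Fa\`a di Bruno merely makes explicit what the paper leaves implicit in writing $\p_x^k|\hat c_\theta(x,\theta)|$. The ``algebraic alternative'' you sketch at the end is in substance the paper's second proof of the same statement via the Omega lemma for Fr\'echet derivatives (\autoref{cor:a:frechet}).
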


\begin{proof}
We have to check that $c\mapsto |c_\theta|$ and $c\mapsto |c_\theta|^{-1}$ map smooth curves to smooth curves. So let $c:\mathbb R\to \ImmL(S^1,\mathbb R^d)$ be a smooth curve. By \autoref{lem:a:smooth1} $x\mapsto \hat c(x,\theta)$ is smooth for each $\theta\in S^1$, and each derivative $x\mapsto \partial^k_x \hat c(x,\cdot)$ is a locally bounded curve in $W^{1,\infty}(S^1,\R^d)$. Then $\esssup_\th |\hat c_\th(x,\th)|$ and $\esssup_\th \frac1{|\hat c_\th(x,\th)|}$ are bounded locally uniformly in $x\in \mathbb R$. It follows that for each $f\in C^\infty(S^1)$ and $k \ge 0$,
\begin{align*}
\p_x\int_{S^1}f(\th) \p_x^k |\hat c_\th(x,\th)|\ud\th &= \int_{S^1}f(\th)\p_x^{k+1}|\hat c_\th(x,\th)|\ud\th \,,
\\
\p_x\int_{S^1}f(\th) \p_x^k \Big(\frac1{|\hat c_\th(x,\th)|}\Big)\ud\th &= \int_{S^1}f(\th)\p_x^{k+1}\Big(\frac1{|\hat c_\th(x,\th)|}\Big)\ud\th \,.
\end{align*}
Thus, we have verified the conditions of \autoref{lem:a:smooth2}, and $|c_\theta|$ and $|c_\theta|\inv$ are smooth curves in $L^\infty(S^1)$. 
\end{proof}

\subsection*{Proof using Fr\'echet derivatives}

The following is an Omega lemma on the space of essentially bounded functions. 

\begin{lemma}\label{lem:a:frechet}
Let $(\Omega,\mathcal F,\mu)$ be a measure space, let $E$ and $F$ be Euclidean vector spaces, let $U$ be an open subset of $E$, and let $f \in C^\infty(U,F)$ be a smooth function. Then 
\begin{equation*}
f_*\colon \left\{h \in L^\infty(\Omega,E); \essinf_{\omega \in \Omega}\inf_{x \in U^c}\|h(\omega)-x\|_E > 0\right\} \to L^\infty(\Omega,F),\; h \mapsto f \circ h,
\end{equation*} 
is a smooth mapping defined on an open subset of $L^\infty(\Omega,E)$.
\end{lemma}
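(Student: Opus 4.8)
The plan is to follow the classical $\Om$-lemma scheme: first identify the domain as an open set and check that $f_*$ is well defined on it, then differentiate once and recognize the derivative as a composition of an operator of the same type with bounded multilinear operations, and finally bootstrap to $C^\infty$ by an induction on the order of differentiability.

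\textbf{Domain and well-definedness.} Write $\de(h):=\essinf_{\om\in\Om}\inf_{x\in U^c}\|h(\om)-x\|_E$, with the convention $\inf\emptyset=+\infty$ (so that the condition is vacuous when $U=E$). First I would check that $\de(h)>0$ and $\|\tilde h-h\|_{L^\infty}<\de(h)/2$ imply $\de(\tilde h)\ge\de(h)/2>0$, which shows that the domain is open in $L^\infty(\Om,E)$. For $h$ in the domain, its essential range is contained in the compact set $K_h:=\{x\in E:\|x\|_E\le\|h\|_{L^\infty},\ \inf_{y\in U^c}\|x-y\|_E\ge\de(h)\}\subset U$; since $f\in C^\infty(U,F)$, both $f$ and all its derivatives are bounded on $K_h$, so $f\circ h$ is measurable and essentially bounded and $f_*(h)\in L^\infty(\Om,F)$.

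\textbf{First derivative.} I expect $f_*$ to be Fréchet differentiable with $Df_*(h).k=\big((Df)\circ h\big)\cdot k$, the pointwise product of $(Df)\circ h\in L^\infty(\Om,L(E,F))$ (here $L(E,F)$ is the finite-dimensional space of linear maps $E\to F$) with $k\in L^\infty(\Om,E)$. To prove this I would use the pointwise identity
\[
f(h+k)-f(h)-\big((Df)\circ h\big)\cdot k=\int_0^1\big((Df)(h+tk)-(Df)(h)\big)\cdot k\,\ud t,
\]
observe that for $\|k\|_{L^\infty}$ small all the points $h(\om)+tk(\om)$ ($t\in[0,1]$, $\om\in\Om$) lie in one compact subset $K'\subset U$, and bound the right-hand side in $L^\infty$ by $\mu_{K'}\big(\|k\|_{L^\infty}\big)\,\|k\|_{L^\infty}=o\big(\|k\|_{L^\infty}\big)$, where $\mu_{K'}$ is a modulus of continuity for $Df$ on $K'$; in particular this also gives continuity of $f_*$. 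The structural observation I would then record is that $Df_*(h)=\Lambda\big((Df)_*(h)\big)$, where $(Df)_*$ is the composition operator attached to the smooth map $Df\colon U\to L(E,F)$ — again of the form covered by the lemma — and $\Lambda\colon L^\infty(\Om,L(E,F))\to L\big(L^\infty(\Om,E),L^\infty(\Om,F)\big)$, $\Lambda(A).k=A\cdot k$, is a bounded linear map, since pointwise multiplication is a bounded bilinear operation on $L^\infty$-spaces with finite-dimensional target.

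\textbf{Bootstrap and main obstacle.} Finally I would prove by induction on $p\ge0$ that $g_*$ is $C^p$ for every smooth $g$ defined on an open subset of a Euclidean space with values in a Euclidean space; the base case $p=0$ is the continuity just established. For the inductive step, applied to $f\in C^\infty(U,F)$, one writes $Df_*=\Lambda\circ(Df)_*$: since $Df$ is smooth, $(Df)_*$ is $C^p$ by the inductive hypothesis, and $\Lambda$ is bounded linear, hence smooth, so $Df_*$ is $C^p$ as a map into $L\big(L^\infty(\Om,E),L^\infty(\Om,F)\big)$; therefore $f_*$ is $C^{p+1}$. The main obstacle is making the one-variable mean-value estimate uniform over $\Om$: this is exactly where the hypothesis $\de(h)>0$ together with $h\in L^\infty$ is essential, as it confines the essential ranges of $h$ and of all nearby $h+tk$ to a single compact subset of $U$ on which $f$ and its derivatives are uniformly controlled. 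The remaining verifications — that every pointwise algebraic operation appearing in the argument is a bounded multilinear map between the relevant $L^\infty$-spaces — are routine because all the target vector spaces are finite-dimensional.
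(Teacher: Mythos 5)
Your proof is correct, and its analytic core is the same as the paper's: confine the essential range of $h$ and of all nearby arguments to a single compact subset of $U$, and control the first-order Taylor remainder of $f$ via the integral form of the mean value theorem together with uniform continuity of the relevant derivative of $f$ on that compact set. Where you differ is in how the induction is organized. The paper inducts on the order $n$ of the derivative of the single map $f_*$ and proves the explicit formula $f_*^{(n)}(h_0)(h_1,\dots,h_n)=f^{(n)}(h_0)(h_1,\dots,h_n)$ (pointwise application of $f^{(n)}$), re-running the same remainder estimate at every level with $f^{(n+1)}$ in place of $Df$. You instead compute only the first derivative and then bootstrap through the factorization $Df_*=\Lambda\circ(Df)_*$, inducting on the differentiability class $p$ quantified over all smooth maps $g$ at once. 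The two arguments are interchangeable here precisely because $L(E,F)$ and its iterates are again finite-dimensional, so $(Df)_*$ is another instance of the same lemma; your version spares you the multilinear bookkeeping of higher derivatives, while the paper's version yields the closed-form expression for $f_*^{(n)}$ as a byproduct (not needed elsewhere in the paper). Your treatment of the domain (openness via the $\de(h)/2$ estimate, compactness of the set containing the essential range) matches the paper's, and your explicit remark that the convention $\inf\emptyset=+\infty$ handles the case $U=E$ is a small clarification the paper omits. There is no gap.
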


\begin{proof}
Let $h$ be in the domain of $f^*$. Then the open ball
\begin{equation*}
B(h):=\left\{k \in L^\infty(\Omega,E); \|h-k\|_{L^\infty(\Omega,E)} < \essinf_{\omega\in\Omega}\inf_{x\in U^c} \|h(\omega)-x\|_E\right\} 
\end{equation*}  
also belongs to the domain of $f_*$, and the essential range of $h$ is contained in the compact set
\begin{equation*}
\left\{x \in E; \|x\|_E\leq \|h\|_{L^\infty(\Omega,E)}, \inf_{y \in U^c} \|x-y\|_E \geq \essinf_{\omega \in \Omega}\inf_{y\in U^c} \|h(\omega)-y\|_E \right\}. 
\end{equation*}
As this holds for all $h$, the domain of $f^*$ is open, and the range of $f_*$ is contained in $L^\infty(\Omega,F)$.

We will prove by induction on $n \in \mathbb N$ that $f_*$ is $n$ times Fr\'echet differentiable with Fr\'echet derivative
\begin{equation*}
f_*^{(n)}(h_0)(h_1,\dots,h_n) 
= 
f^{(n)}(h_0)(h_1,\dots, h_n).
\end{equation*}
Note that this is well-defined because $f^{(n)}(h_0)$ belongs to $L^\infty(\Omega,(E^*)^{\otimes n}\otimes F)$ by what we have just shown and because multiplication of $L^\infty$ functions is continuous. For $n=0$ there is nothing to prove. Assume the inductive hypothesis that the statement holds for $n$, let $h_0$ belong to the domain of $f_*$, let $\tilde h_0 \in B(h_0)$, let $K$ be the compact set given by
\begin{multline*}
\bigg\{x \in E; \|x\|_E\leq \|h_0\|_{L^\infty(\Omega,E)}+\|\tilde h_0\|_{L^\infty(\Omega,E)}, 
\\
\inf_{y \in U^c} \|x-y\|_E \geq \essinf_{\omega \in \Omega}\inf_{y\in U^c} \|h(\omega)-y\|_E - \|\tilde h_0-h_0\|_{L^\infty(\Omega,E)}\bigg\},
\end{multline*}
and let $h_1,\dots,h_n \in L^\infty(\Omega,E)$. Then it holds that 
\begin{align*}
\hspace{2em}&\hspace{-2em}
\bigg\|f_*^{(n)}(\tilde h_0)(h_1,\dots,h_n)-f_*^{(n)}(h_0)(h_1,\dots,h_n)
\\&\qquad\qquad
-f^{(n+1)}(h_0)(\tilde h_0-h_0,h_1,\dots,h_n)\bigg\|_{L^\infty(\Omega,F)}
\\&=
\bigg\|\int_0^1 f^{(n+1)}\big((1-t)h_0+t\tilde h_0)(\tilde h_0-h_0,h_1,\dots,h_n)\ud t
\\&\qquad\qquad
-f^{(n+1)}(h_0)(\tilde h_0-h_0,h_1,\dots,h_n) \bigg\|_{L^\infty(\Omega,F)}
\\&\leq
\sup_{x,y \in K} \left\|f^{(n+1)}(x)-f^{(n+1)}(y)\right\|_{(E^*)^{\otimes(n+1)}\otimes F} \|\tilde h_0-h_0\|_{L^\infty(\Omega,E)}
\\&\qquad\qquad
\cdot\|h_1\|_{L^\infty(\Omega,E)}\dots \|h_n\|_{L^\infty(\Omega,E)}.
\end{align*}
This proves the statement for $n+1$. Thus, we have shown by induction that $f_*$ is infinitely Fr\'echet differentiable. 
\end{proof}

\begin{corollary}\label{cor:a:frechet}
The mappings $c\mapsto |c_\theta|$ and $c\mapsto |c_\theta|^{-1}$ are smooth from $\ImmL$ to $L^\infty(S^1)$.
\end{corollary}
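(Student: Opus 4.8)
The plan is to reduce \autoref{cor:a:frechet} to the Omega lemma \autoref{lem:a:frechet}, exactly as \autoref{cor:a:smooth} was deduced from the convenient-calculus machinery. First I would observe that the map $c \mapsto c_\theta$ is a bounded linear operator from $\Lip = W^{1,\infty}(S^1,\R^d)$ to $L^\infty(S^1,\R^d)$, hence smooth; so it suffices to show that $q \mapsto |q|$ and $q \mapsto |q|^{-1}$ are smooth on the relevant open subset of $L^\infty(S^1,\R^d)$ and then precompose with $c \mapsto c_\theta$.

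Next I would apply \autoref{lem:a:frechet} twice, taking $(\Omega,\mathcal F,\mu) = (S^1,\text{Borel},\ud\theta)$, $E = \R^d$, and $F = \R$. For the first map, set $f\colon \R^d \to \R$, $f(x) = |x|$; this is smooth on $U = \R^d \setminus \{0\}$, which is open. The domain of $f_*$ produced by \autoref{lem:a:frechet} is
\begin{equation*}
\left\{h \in L^\infty(S^1,\R^d)\colon \essinf_{\theta\in S^1}\inf_{x\in U^c}\|h(\theta)-x\|_{\R^d} > 0\right\}
= \left\{h \in L^\infty(S^1,\R^d)\colon \essinf_{\theta\in S^1} |h(\theta)| > 0\right\},
\end{equation*}
since $U^c = \{0\}$. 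The composition of $c \mapsto c_\theta$ with the inclusion of $\ImmL$ into $\Lip$ lands precisely in this set, by the definition of $\ImmL$; hence $c \mapsto |c_\theta|$ is smooth $\ImmL \to L^\infty(S^1)$. For the second map, set $g\colon \R^d\to\R$, $g(x) = |x|^{-1}$, again smooth on $U = \R^d\setminus\{0\}$, and the same argument shows $c \mapsto |c_\theta|^{-1}$ is smooth. Alternatively, one can observe that $|q|^{-1}$ is the composition of $|q|$ with the smooth inversion map $t \mapsto t^{-1}$ on $(0,\infty)$, and that this inversion induces a smooth map on $\{u \in L^\infty(S^1)\colon \essinf_\theta |u(\theta)| > 0\}$ by another application of \autoref{lem:a:frechet} (with $E = F = \R$, $U = (0,\infty)$); either route works.

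I do not expect a genuine obstacle here: the statement is literally the content of \autoref{lem:a:frechet} specialized to $f(x) = |x|$ and $f(x) = |x|^{-1}$, with the only bookkeeping being to identify the abstract domain in \autoref{lem:a:frechet} with the set $\ImmL$ pulled back along $c \mapsto c_\theta$. The one point that deserves a line of care is that $\essinf_\theta \inf_{x \in \{0\}}\|h(\theta) - x\| = \essinf_\theta |h(\theta)|$, so the openness of the domain of $f_*$ in $L^\infty(S^1,\R^d)$ matches the openness of $\ImmL$ in $\Lip$ already established in \autoref{thm:m:manifold}; everything else is immediate. Thus this corollary is really a corollary, and the proof is a two-sentence invocation of the preceding lemma together with smoothness of the bounded linear map $c \mapsto c_\theta$.
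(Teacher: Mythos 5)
Your proof is correct and is essentially identical to the paper's: both apply \autoref{lem:a:frechet} with $\Omega=S^1$, $E=\R^d$, $F=\R$, $U=\R^d\setminus\{0\}$, and $f(x)=|x|$ or $f(x)=|x|^{-1}$, precomposed with the bounded linear map $c\mapsto c_\theta$ from $\ImmL$ into the open domain $\{h:\essinf_\theta|h(\theta)|>0\}$. The only addition is your explicit check that $\inf_{x\in\{0\}}\|h(\theta)-x\|=|h(\theta)|$, which the paper leaves implicit.
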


\begin{proof}
This follows from \autoref{lem:a:frechet} applied to the Lebesgue space $\Omega=S^1$, $E=\mathbb R^d$, $U = \mathbb R^d \backslash \{0\}$, $F=\mathbb R$, and $f(x)=|x|$ or $f(x)=|x|^{-1}$, respectively, using that $\partial_\theta\colon \ImmL \to L^\infty$ is a bounded linear map.
\end{proof}

\section{The cometric on the space of Lipschitz immersions}
\label{sec:b}

In this part we want to describe the momentum associated to a velocity $h\in T_c\ImmL$ and use this to calculate the cometric on the space of Lipschitz immersions. 

\begin{lemma}\label{lem:b:mom}
For any $c \in \ImmL_0$ and $h \in \Lip_0$, the momentum $\check G_c(h) \in (\Lip_0)^*$ is the $\mathbb R^d$-valued distribution on $S^1$ given by
\begin{equation*}
\check G_c(h) = \frac{1}{\ell_c}D_s^*(D_s h\ud s)=-\frac{1}{\ell_c} D_s^2 (h \ud s)\,.
\end{equation*}
\end{lemma}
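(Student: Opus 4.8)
The plan is to compute the metric $G_c(h,k)$ for $h,k\in\Lip_0$, integrate by parts in $\theta$ to move the derivative off $k$, and read off the distribution paired against $k$. Starting from the definition,
\[
G_c(h,k) = \frac{1}{\ell_c}\int_{S^1}\langle D_s h, D_s k\rangle\ud s,
\]
I would first rewrite this using the adjoint $D_s^*$ of $D_s$ with respect to the $L^2(\ud s)$-pairing, giving $G_c(h,k) = \frac{1}{\ell_c}\int_{S^1}\langle D_s^*(D_s h\ud s), k\rangle$, where $D_s^*(D_s h\ud s)$ is now a distribution (an element of $(\Lip_0)^*$) rather than an honest function, since $D_s h\in W^{0,\infty}$ need not be differentiable. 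The first equality $\check G_c(h) = \frac{1}{\ell_c}D_s^*(D_s h\ud s)$ then holds essentially by the definition of the momentum map $\check G_c\colon h\mapsto G_c(h,\cdot)$, once the pairing conventions are pinned down. I would take care to note that this pairing is the one between $(\Lip_0)^*$ and $\Lip_0$, and that the factorization through $\pi_1$ (or equivalently the fact that constants pair to zero) makes it well-defined on the quotient.

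For the second equality I would unwind $D_s^* $ and the measure $\ud s = |c_\theta|\ud\theta$. Concretely, for any test function $k$,
\[
\frac{1}{\ell_c}\int_{S^1}\langle D_s h, D_s k\rangle|c_\theta|\ud\theta
= \frac{1}{\ell_c}\int_{S^1}\Big\langle \frac{h_\theta}{|c_\theta|}, \frac{k_\theta}{|c_\theta|}\Big\rangle |c_\theta|\ud\theta
= \frac{1}{\ell_c}\int_{S^1}\Big\langle \frac{h_\theta}{|c_\theta|}, k_\theta\Big\rangle\ud\theta.
\]
The integrand pairs $D_s h$ (a function) with $k_\theta = \partial_\theta k$. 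Interpreting $\partial_\theta$ distributionally, this last expression is exactly $-\langle \partial_\theta(D_s h), k\rangle$ evaluated distributionally on $S^1$, up to the factor $\frac{1}{\ell_c}$; but $\partial_\theta(D_s h) = \partial_\theta\big(\frac{1}{|c_\theta|}\partial_\theta h\big)$. Since $D_s = \frac{1}{|c_\theta|}\partial_\theta$ and $\ud s = |c_\theta|\ud\theta$, one has the operator identity $\partial_\theta(\,\cdot\,) = D_s(\,\cdot\,\ud s)$ when acting on the appropriate objects, and hence $\partial_\theta(D_s h) = D_s(D_s h\,\ud s)$; iterating, $\partial_\theta^2(h\,\ud s$-type object$)$ organizes into $D_s^2(h\ud s)$. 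I would make the bookkeeping precise by checking the identity on the level of the explicit formulas for $D_s$, $D_s^*$, and $D_s^{-1}$ from \autoref{lem:m:Ds}, and conclude $\check G_c(h) = \frac{1}{\ell_c}D_s^*(D_s h\ud s) = -\frac{1}{\ell_c}D_s^2(h\ud s)$.

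The main obstacle is purely one of careful interpretation rather than any substantial analysis: one must be consistent about what $D_s$, $D_s^*$, and $\partial_\theta$ mean when applied to merely $L^\infty$ (hence distributional) objects, and about which function space the momentum lives in. In particular $D_s h\ud s = h_\theta$ is an $L^\infty$ function and $D_s^*$ applied to it, or $\partial_\theta$ of it, is a distribution in $(W^{1,\infty})^*$; the claim is an equality of such distributions, and the cleanest route is to verify it by testing against smooth $k$ and using the integration-by-parts formula on $S^1$ (no boundary terms, as the curve is closed). Once the conventions of \autoref{lem:m:Ds} and \autoref{def:p:coordinates} are in force, the two displayed expressions for $\check G_c(h)$ are just two notations for the same distribution, and the proof is a short verification.
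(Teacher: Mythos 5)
Your proposal is correct and follows essentially the same route as the paper: the first equality is read off from the definition of $D_s^*$ acting on distributions by duality, and the second follows by testing against smooth $k$ and integrating by parts on $S^1$ (the paper phrases this as $-D_s$ being the $L^2(\ud s)$-adjoint of $D_s$, while you integrate by parts in $\theta$, but the bookkeeping is equivalent and your signs come out right). The only looseness is the phrase about ``iterating'' the identity $\partial_\theta(\cdot)=D_s(\cdot\ud s)$, which you correctly flag as needing the explicit formulas of Lemma~\ref{lem:m:Ds} to pin down.
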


\begin{proof}
The first expression is clear from the definition: as $D_s^*$ applied to a distribution $\alpha$ is defined as $\alpha\circ D_s$, one has
\begin{align*}
\left\langle\frac{1}{\ell_c}D_s^*(D_s h\ud s),k\right\rangle_{(\Lip_0)^*,\Lip_0}
&=
\frac{1}{\ell_c}\left\langle D_s h\ud s,D_s k\right\rangle_{(W^{1,\infty}_0)^*,W^{1,\infty}_0}
\\&=
\frac{1}{\ell_c}\int_{S^1} \left\langle D_s h ,D_s k\right\rangle_{\R^d} \ud s
=
G_c(h,k).
\end{align*}
The second relation is obtained by integration by parts: for each smooth $k$,
\begin{equation*}
\check G_c(h)(k)=G_c(h,k)=\frac{1}{\ell_c}\int \langle D_s h,D_s k\rangle \ud s =-\frac{1}{\ell_c}\int \langle  h,D_s^2 k\rangle \ud s 
\,.
\end{equation*}
Note that $-D_s$ is the adjoint of $D_s$ with respect to $L^2(\ud s;\R^d)$. Thus, the statement of the lemma follows from the definition of distributional derivatives.
\end{proof}

We will now describe the cometric. Recall that $(L^\infty)^*$ is isomorphic to the Banach space $\on{ba}$ of $\R^d$-valued finitely additive set functions on the Lebesgue $\sigma$-algebra of $S^1$ which vanish on Lebesgue null sets, endowed with the total variation norm \cite[Theorem~IV.8.16]{dunford1958linear}. If an element of $\on{ba}$ is countably additive, then the Radon-Nikodym derivative with respect to $\ud s$ is well-defined. Moreover, recall that the smooth cotangent space is defined as $\check G_c(T_c\ImmL_0) \subseteq T^*_c\ImmL_0$.

\begin{lemma}\label{lem:b:cometric}
A covector $\alpha \in T^*_c\ImmL_0$ belongs to the smooth cotangent space if and only if the following two properties hold:
\begin{enumerate}
\item The set function $(D_s^{-1})^*\alpha \in \on{ba}$ is countably additive, i.e., it is an absolutely continuous vector measure, and
\item The Radon-Nikodym derivative of $(D_s^{-1})^*\alpha$ with respect to the measure $\!\ud s$ is in $L^\infty$.
\end{enumerate}
If $\alpha$ and $\beta$ are in the smooth cotangent space at $c$, then 
\begin{equation*}
G_c^{-1}(\alpha,\beta)=\ell_c \int_{S^1}\left\langle\frac{(D_s^*)^{-1}\alpha}{\!\ud s},\frac{(D_s^*)^{-1}\beta}{\!\ud s}\right\rangle\ud s\,.
\end{equation*}
\end{lemma}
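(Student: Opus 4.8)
The plan is to characterize the smooth cotangent space $\check G_c(T_c\ImmL_0)$ by composing the explicit formulas for $\check G_c$ and $D_s$ from \autoref{lem:m:Ds} and \autoref{lem:b:mom}, and then read off the integral formula for the cometric from the change of variables $\alpha = \check G_c h$. First I would use \autoref{lem:b:mom} to write $\alpha = \check G_c h = \frac{1}{\ell_c} D_s^*(D_s h\,\ud s)$ for $h \in T_c\ImmL_0 = \Lip_0$. Applying $(D_s^{-1})^*$ — more precisely $(D_s^*)^{-1}$, the adjoint of the isomorphism $D_s \colon W^{1,\infty}_0 \to W^{0,\infty}_0$ from \autoref{lem:m:Ds} — one gets $(D_s^*)^{-1}\alpha = \frac{1}{\ell_c} D_s h\,\ud s$. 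Now $D_s h = h_\theta/|c_\theta| \in L^\infty$, so $D_s h\,\ud s = h_\theta\,\ud\theta$ is an absolutely continuous vector measure whose Radon–Nikodym derivative with respect to $\ud s$ equals $\frac{1}{\ell_c} D_s h \in L^\infty$. This shows that any $\alpha$ in the smooth cotangent space satisfies (1) and (2).

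For the converse I would argue that (1) and (2) let one \emph{reconstruct} $h$: given $\alpha \in T^*_c\ImmL_0$ satisfying (1) and (2), set $k := \ell_c\,\frac{(D_s^*)^{-1}\alpha}{\ud s} \in L^\infty$. One must check that $k$ lies in the image of $D_s \colon W^{1,\infty}_0 \to W^{0,\infty}_0$, i.e. that $\int_{S^1} k\,\ud s = 0$; this follows because $(D_s^*)^{-1}\alpha$ is in the range of $(D_s^*)^{-1}$, which is $(W^{0,\infty}_0)^* $ pulled back, equivalently because $D_s^*$ only hits functionals that annihilate constants — so $k$ has zero $\ud s$-mean by construction. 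Then $h := D_s^{-1} k \in W^{1,\infty}_0$ is a genuine tangent vector, and unwinding the definitions gives $\check G_c h = \frac{1}{\ell_c} D_s^*(D_s h\,\ud s) = \frac{1}{\ell_c} D_s^*(k\,\ud s) = \alpha$, so $\alpha \in \check G_c(T_c\ImmL_0)$.

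Finally, for the cometric formula, given $\alpha, \beta$ in the smooth cotangent space write $\alpha = \check G_c h$, $\beta = \check G_c \ell$ with $h,\ell \in \Lip_0$. By definition $G_c^{-1}(\alpha,\beta) = G_c(h,\ell) = \frac{1}{\ell_c}\int_{S^1}\langle D_s h, D_s \ell\rangle\,\ud s$. Substituting the identities $D_s h = \ell_c\,\frac{(D_s^*)^{-1}\alpha}{\ud s}$ and $D_s \ell = \ell_c\,\frac{(D_s^*)^{-1}\beta}{\ud s}$ obtained above yields exactly
\begin{equation*}
G_c^{-1}(\alpha,\beta) = \ell_c \int_{S^1}\left\langle\frac{(D_s^*)^{-1}\alpha}{\ud s},\frac{(D_s^*)^{-1}\beta}{\ud s}\right\rangle\ud s\,.
\end{equation*}

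The main obstacle I anticipate is the functional-analytic bookkeeping around $(D_s^*)^{-1}$ as a map on $\on{ba}$: one needs to be careful that $(D_s^{-1})^*\alpha$, a priori only a finitely additive set function in $(L^\infty)^*\cong\on{ba}$, is the object whose countable additivity is being imposed, and to track the dualities $(W^{1,\infty}_0)^* \leftrightarrow (W^{0,\infty}_0)^*$ through the commutative diagram of \autoref{lem:m:Ds} — including the role of the projections $\pi_0,\pi_1$ and inclusions $\iota_0,\iota_1$, so that "zero $\ud s$-mean" is correctly matched with "annihilates constants." The differential-calculus and change-of-variables parts are then routine.
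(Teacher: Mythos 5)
Your proposal is correct and follows essentially the same route as the paper's proof: the forward direction by computing $(D_s^*)^{-1}\alpha=\ell_c^{-1}D_sh\ud s$ explicitly, the converse by reconstructing $h=D_s^{-1}k$ after observing that $k$ has zero $\ud s$-mean because the range of $(D_s^{-1})^*$ is $(W^{0,\infty}_0)^*$, and the cometric formula by substituting these identities into $G_c(h,\ell)$. The functional-analytic bookkeeping you flag is exactly the point the paper also leans on, and your handling of it is adequate.
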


\begin{proof}
If $\alpha$ is a smooth covector, then $\alpha = \ell_c^{-1}D_s^*(D_s h \ud s)$ for some $h \in \Lip_0$. Therefore, $(D_s^{-1})^*\alpha=\ell_c^{-1}D_s h \ud s$ is a countably additive set function, and $(D_s^{-1})^*\alpha/\ud s =\ell_c^{-1}D_s h \in L^\infty$. Conversely, assume that $(D_s^{-1})^*\alpha$ is countably additive and $k:=(D_s^{-1})^*\alpha/\ud s \in L^\infty$. Then $\int_{S^1} k\ud s=0$ by the definition of $\on{range} (D_s^{-1})^*=(W^{0,\infty}_0)^*$, which means that $k=D_sh$ for some $h \in \Lip_0$. Thus, $\alpha=D_s^*(D_sk \ud s)=\ell_c\check G_c(h)$ is in the smooth cotangent space. This shows the first statement. To show the formula for $G^{-1}$, let $\alpha = \check G_c(h)$ and $\beta=\check G_c(k)$. Then 
\begin{align*}
G_c^{-1}(\alpha,\beta)
&=
G_c(h,k)
=
\frac{1}{\ell_c}\int_{S^1}\langle D_sh,D_sk\rangle \ud s 
\\&= 
\ell_c\int_{S^1}\left\langle\frac{(D_s^*)^{-1}\alpha}{\!\ud s},\frac{(D_s^*)^{-1}\beta}{\!\ud s}\right\rangle\ud s\,.
\qedhere
\end{align*}  
\end{proof}

\printbibliography

\end{document}